\numberwithin{equation}{section}
\newtheorem{theo}{Theorem}[section]
\newtheorem{lem}{Lemma}[section]
\newtheorem{defi}{Definition}[section]
\newtheorem{theodefi}{Theorem-Definition}[section]
\newtheorem{cl}{Claim}[section]
\newtheorem{prop}{Proposition}[section]
\newtheorem{rem}{Remark}[section]
\newcommand{\eps}{\varepsilon}
\newcommand{\D}{\mathbb{D}}
\newcommand{\R}{\mathbb{R}}
\newcommand{\C}{\mathbb{C}}
\newcommand{\Z}{\mathbb{Z}}
\newcommand{\dusubseteq}{\mathrel{
\rotatebox[origin=c]{45}{$\subsetneq$}}
}
\newcommand{\ddsubseteq}{\mathrel{
\rotatebox[origin=c]{-45}{$\subsetneq$}}
}
\begin{document}

\title[]{Asymptotics for minimizers of the Ginzburg-Landau energy with optimal regularity of the boundary data and applications}
\author{Paul Laurain, Romain Petrides} 

\address[Paul Laurain]{
Institut de Mathématiques de Jussieu, Université de Paris, Bâtiment Sophie Germain, Case 7052, 75205
Paris Cedex 13, France \& DMA, Ecole normale supérieure, CNRS, PSL Research University, 75005 Paris.}
\email{paul.laurain@imj-prg.fr}
\address[Romain Petrides]{Institut de Mathématiques de Jussieu, Université de Paris, Bâtiment Sophie Germain, 75205
Paris Cedex 13, France}
\email{romain.petrides@imj-prg.fr}

\maketitle

\begin{abstract} 
We perform the classical asymptotic analysis for the Ginzburg-Landau energy which originates from the celebrated paper by Bethuel, Brezis, Hélein for nonsmooth boundary data. More precisely, we give optimal regularity assumptions on the boundary curve of planar domains and Dirichlet boundary data on them. When the Dirichlet boundary data is the tangent vector field of the boundary curve, our framework allows us to define a natural energy minimizing frame for simply connected domains enclosing Weil-Petersson curves.
\end{abstract}

The so-called simplified Ginzburg-Landau functional on a domain $\Omega$
$$ E_{\eps}\left(u\right) = \frac{1}{2} \int_{\Omega} \left\vert \nabla u \right\vert^2 + \frac{1}{4\eps^2}\int_{\Omega}\left(1-\vert u\vert^2\right)^2$$
and all its variations were intensively studied after the celebrated paper by Bethuel, Brezis, Hélein (BBH) \cite{BBH2}, since energies of this type are useful to study phase transition problems. From this seminal work, a lot of attention has been given to the behavior of the critical points: see e.g.  Pacard and
Rivière \cite{PR},  Sandier and Serfaty \cite{SS}, Farina and Mironescu \cite{FM}, Millot and Pisante \cite{MP}, Ignat, Nguyen, Slastikov and Zarnescu \cite{INSZ} and the references therein.

One other well known usefulness of this energy is to build kinds of \enquote{Dirichlet energy minimizing} maps with singularities adapted to the strong topological constraints given by some geometrical problems. This is our central motivation. For instance, we cannot find any smooth nor $H^1$ \textit{unit} vector field extensions with finite energy of maps $\partial{\Omega} \to \mathbb{S}^1$ of any topological degree. 
Therefore, we have to relax the constraint on the set of admissible vector fields in the variational problem. Vector fields do not necessary have unit norm anymore, but as a compensation, we add a term to the energy which drastically penalizes vector fields that do not have unit norm as $\eps \to 0$. Classical works by \cite{BBH2}, \cite{Struwe94}, prove the convergence as $\eps \to 0$ of minimizers of $E_{\eps}$ to harmonic maps with singularities that are minimizers of the so-called renormalized energy. In the same spirit, studying the Allen-Cahn equation is a valuable technique to build new minimal surfaces, see \cite{Gua} and \cite{CM}.

In the current paper, we aim at weakening the known regularity assumptions on the boundary data for the asymptotic analysis as $\eps\to 0$ for minimizers of the Ginzburg-Landau energy $E_{\eps}$
on simply connected domains $\Omega$ and $u : \Omega \to \mathbb{R}^2$ satisfying a Dirichlet boundary condition $u = g$ on $\Gamma := \partial\Omega$ for some functions $g : \Gamma \to \mathbb{S}^1$ with a prescribed degree. Not only the weakenings hold for the regularity of the boundary data $g$ but also for the regularity of the domain $\Omega$. Thanks to Jerrad and Sandier, see \cite{Jerrard}, \cite{Sandier} and \cite{Sandiererratum}, energy estimates on the Ginzburg-Landau energy allow $H^{\frac{1}{2}}$ boundary data but to our knowledge, the classical BBH asymptotic analysis which originates from the paper \cite{BBH2} has never been completely done with domains $\Omega$ with lower regularity than Lipschitz.


Before stating our result, we would like to give some motivations for these weakenings of assumptions. 
One by-product of the BBH analysis is to produce $u_* :\Omega \rightarrow S^1$ which is harmonic with $u_*=g$ on the boundary (of course $u_*$ is singular if $\mathrm{deg}(g)\not =0$). In the special case $g=\tau$ being the tangent unit vector field of the boundary, one can easily produce a family of such harmonic maps by considering a uniformization of the domain. Indeed, see proposition \ref{propcanonicaluniformization}, if $f_a :\D \rightarrow \Omega$ is a biholomorphic map such that $f(0)=a\in \Omega$, then $u_a=\frac{\partial_{\theta} f}{\left\vert \partial_\theta f \right\vert}$ is a desired harmonic map. As we show in Theorem \ref{propWeylPeterssonframeenergy}, using the BBH renormalized energy, $u_*$ is the one that maximizes $\vert f_a'(0) \vert $. More precisely, the expression of the renormalized energy associated to a uniformization $f$ has the form
$$\int_{\mathbb{D}} \left\vert \frac{f''}{f'} \right\vert^2 + 2\pi \ln \left\vert f'(0) \right\vert,$$
which is closely related to the Loewner energy of the boundary curve, see section \ref{motivations} for more details. Hence our goal is to make the BBH asymptotic analysis for domains whose boundary curves have the minimal regularity that makes this energy finite. Those curves are known as Weil-Petersson curves: they are chord-arc curves whose unit tangent vector field lies in $H^{\frac{1}{2}}$ (see theorem-definition \ref{thbishop} due to Bishop). More details, on the geometric motivations of our result are given in section \ref{motivations}.\\

Weil-Petersson curves are not necessarily $C^1$ curves because there are non-continuous $H^{\frac{1}{2}}$ functions. However, contrary to Lipschitz curves they cannot have corners see section 2.1 of \cite{RohdeWang}. Nevertheless, it is important to remark that they {\it a priori} not bound Lipschitz domain, since the slow log-spiral, $t\mapsto te^{i\log(\log(1/t))}$, is a Weil-Petersson curve.

\begin{figure}[!h]
\includegraphics[scale=.5]{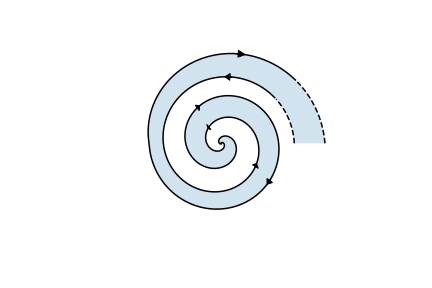}
\caption{A domain with slow log-spiral}
\end{figure}

Let's schematize all the regularity assumptions on the Jordan curve $\Gamma$ which borders $\Omega$ we mention in the paper (definitions are given in Section \ref{notdef}):

\[
\begin{array}{ccccc}
&&\text{Weil-Petersson curves} &&\\
&\dusubseteq& & \ddsubseteq&\\
\mathcal{C}^1 \text{ curves} &&&& \text{ Chord-arc curves} \subsetneq \text{ Quasicircles }\\
& \ddsubseteq&& \dusubseteq&\\
&&\text{Lipschitz curves}&&
\end{array}
\]

Coming back to the analysis, we first state an optimal result for the weakest regularity of $\Omega$ such that the classical definition of $H^{\frac{1}{2}}(\Gamma)$ makes sense:

\begin{theo} \label{theoGL}
Let $\Omega$ be a domain such that $\Gamma:= \partial\Omega$ is a chord-arc curve and $g \in H^{\frac{1}{2}}(\Gamma ,S^1)$ with $\mathrm{deg}(g)=1$, then there is $u_\eps \in H^{1}(\Omega)$ which minimizes, $E_{\eps}$
on the admissible set
$$ \mathcal{A} = \{ u\in H^1\left(\Omega,\mathbb{R}^2 \right) ; u = g \text{ a.e on }\Gamma   \} . $$
Moreover, for any sequence $\eps_k \to 0$, there are a subsequence $\eps_{k_l} \to 0$, $a\in \Omega$ and $u_{\star} \in C^\infty \left(\Omega \setminus \{a\}, S^1\right)\cap H^1_{loc} (\overline{\Omega}\setminus\{a\})$ such that

\begin{enumerate}[label=(\roman*)]
\item $u_{\star}$ is a harmonic map,
\item  $u_{\star}=g$ on $\partial \Omega$,
\item $\mathrm{deg}\left(\left(u_{\star}\right)_{\vert \partial B(a, \rho)}\right)=1$ for $\rho>0$ small enough,
\item $u_{\eps_{k_l}} \rightarrow u_{\star}$ in $C^\infty_{loc} (\Omega\setminus\{a\})$ as $l\to+\infty$,
\item $u_{\eps_{k_l}} \rightarrow u_{\star}$ in $H^1_{loc} (\overline{\Omega}\setminus\{a\})$ as $l\to +\infty$, 
\item $  \vert u_{\eps_{k_l}} \vert \to 1$ in $L^{\infty}_{loc}\left(\overline{\Omega}\setminus\{a\}\right)$ as $l\to +\infty$. 
\end{enumerate}
\end{theo}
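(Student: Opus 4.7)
The admissible set $\mathcal{A}$ is non-empty because the chord-arc assumption on $\Gamma$ furnishes a bounded extension $H^{1/2}(\Gamma)\to H^1(\Omega)$, so the $S^1$-valued datum $g$ admits an $\R^2$-valued $H^1$-extension. The direct method then produces a minimizer $u_\eps\in\mathcal{A}$: the gradient part of $E_\eps$ is convex and weakly lower semicontinuous, and the potential part is continuous for the strong $L^4(\Omega)$ topology available by Sobolev embedding on the bounded Jordan chord-arc domain. For the matching upper bound $E_\eps(u_\eps)\le \pi\log(1/\eps)+C$, I fix a Riemann map $f:\D\to\Omega$, which by Carath\'eodory extends homeomorphically to the closure since $\Gamma$ is a Jordan curve, and which pulls back $g$ to $\tilde g\in H^{1/2}(\partial\D,S^1)$ of degree $1$. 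On the disk one constructs the classical BBH competitor: an $S^1$-valued harmonic extension of $\tilde g$ on $\D\setminus B(0,\eps)$ patched with a radial profile on $B(0,\eps)$. Conformal invariance of the Dirichlet integral transfers the gradient cost through $f$, and the potential term is handled by direct estimate using that $|f'|$ is locally bounded in the interior.

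\textbf{Lower bound, vortex extraction and interior convergence.} Since the Jerrard--Sandier vortex ball construction already allows $H^{1/2}$ boundary data (see the cited works \cite{Jerrard,Sandier,Sandiererratum}), I obtain a matching lower bound: there are finitely many disjoint balls covering $\{|u_\eps|\le 1/2\}$, with integer degrees $d_i^\eps$ summing to $\deg g=1$, and with bulk contribution $\pi\bigl(\sum_i|d_i^\eps|\bigr)\log(1/\eps)-C$. Comparison with the upper bound forces, along a subsequence, a unique ball of degree $1$ whose centres converge to a point $a\in\overline\Omega$; a boundary refinement of the vortex construction, together with the $H^{1/2}$ control of $g$, excludes $a\in\Gamma$. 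Once $|u_\eps|\ge 1/2$ outside $B(a,\rho)$, the classical BBH--Struwe clearing-out and bootstrap argument yield $|u_\eps|\to 1$ locally uniformly on compact subsets of $\Omega\setminus\{a\}$, and interior elliptic regularity applied to $-\Delta u_\eps = \eps^{-2}u_\eps(1-|u_\eps|^2)$ delivers the $C^\infty_{loc}(\Omega\setminus\{a\})$ convergence to a harmonic $S^1$-valued map $u_\star$ satisfying (i) and (iii).

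\textbf{The main obstacle: convergence up to the boundary.} Upgrading this convergence to $H^1_{loc}(\overline\Omega\setminus\{a\})$ and verifying the trace identity $u_\star=g$ on $\Gamma$ is where the low regularity of $\Omega$ really bites, since standard boundary Schauder estimates and reflection across $\Gamma$ are unavailable. My plan is to pull back to the disk via $f$ and work with $v_\eps:=u_\eps\circ f\in H^1(\D)$, which has the same Dirichlet energy as $u_\eps$ and solves a weighted Ginzburg--Landau equation with conformal weight $|f'|^2$. Because $\Gamma$ is chord-arc, $\log|f'|\in \mathrm{BMO}(\partial\D)$, so $|f'|^2$ is an $A_\infty$ weight, $f$ extends quasiconformally to the plane, and the pulled-back datum $\tilde g$ stays in $H^{1/2}(\partial\D,S^1)$. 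On the disk I then run the BBH Pohozaev identity against vector fields tangent and normal to $\partial\D$, localised in an arc away from $f^{-1}(a)$; this is legitimate since $\partial\D$ is smooth, and gives a uniform $H^1$ bound for $v_\eps$ up to that arc. The $A_\infty$-property of $|f'|^2$ transfers these bounds back to $u_\eps$ on a chord-arc neighbourhood of any boundary point distinct from $a$. Passing to the weak limit and invoking weak continuity of the $H^{1/2}$-trace gives $u_\star=g$ on $\Gamma$, establishing (ii), (iv), (v) and (vi).
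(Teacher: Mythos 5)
Your upper bound and interior analysis match the paper's, but two of your load-bearing steps would not go through, and the paper develops quite different tools precisely to avoid them.

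\textbf{The Pohozaev step fails, and pulling back to the disk does not cure it.} You propose to ``run the BBH Pohozaev identity against vector fields tangent and normal to $\partial\D$'' after transporting the problem to the disk via $f$, arguing that this is legitimate because $\partial\D$ is smooth. The obstruction to Pohozaev in this problem is not the roughness of $\Gamma$ but the regularity of the boundary \emph{data}: the identity produces a boundary term involving the tangential derivative of $g$, and for $g\in H^{1/2}(\Gamma)$ that derivative lies only in $H^{-1/2}$. Composing with $f$ leaves $\tilde g = g\circ f$ in $H^{1/2}(\partial\D,S^1)$ and no better, so the boundary term in the Pohozaev identity on $\D$ is still not under control. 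The authors state this explicitly and circumvent it with a different device: they prove the key potential estimate $\frac{1}{4\eps^2}\int_\Omega(1-|u_\eps|^2)^2\le C$ by extending $u_\eps$ to a strictly larger smooth domain $\Omega'$ via Lemma~\ref{extl} (which produces an $S^1$-valued $H^1$ extension of $g$ across $\Gamma$, using the quasiconformal extension of a Riemann map of $\Omega^c$), and then comparing with the \emph{smooth-domain} lower bound of del Pino--Felmer on $\Omega'$. Nothing in your proposal replaces this.

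\textbf{The boundary clearing-out is asserted, not proved, and it is where the quasidisk hypothesis is actually used.} You write that ``a boundary refinement of the vortex construction, together with the $H^{1/2}$ control of $g$, excludes $a\in\Gamma$.'' The cited Jerrard--Sandier estimates allow $H^{1/2}$ data on \emph{Lipschitz} domains, which chord-arc domains need not be (the paper's log-spiral example is chord-arc, even Weil--Petersson, but not Lipschitz). More fundamentally, the crucial missing input is Proposition~\ref{propfarboundary}: there exist $\eta_0,\eps_0>0$ such that $\mathrm{dist}(x,\Gamma)\le\eta_0\eps$ forces $|u_\eps(x)|\ge\tfrac12$. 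This is what keeps the bad disks strictly inside $\Omega$ so that degrees can be counted, and it is proved by a three-term decomposition $v_\eps=a_\eps+b_\eps+h$ of the pullback on $\D$, a weighted Wente-type estimate (Lemma~\ref{LLP}) controlling $a_\eps$, a quasisymmetric distortion estimate controlling the harmonic remainder $b_\eps$ near the boundary, and a VMO/John--Nirenberg argument (Lemma~\ref{Klem}) showing the harmonic extension $h$ of an $S^1$-valued $H^{1/2}$ map is uniformly proper near $\partial\D$. None of this is subsumed by an $A_\infty$-weight remark. Separately, the paper's Section~\ref{secinteriorsingularity} excludes $a\in\Gamma$ by an independent degree/arc-length argument using \eqref{eqs}; that too is absent from your sketch. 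Finally, the strong $H^1_{\mathrm{loc}}(\overline\Omega\setminus\{a\})$ convergence in the paper comes from a comparison argument with $S^1$-valued harmonic competitors built from the extension across $\Gamma$, not from boundary elliptic estimates in a weighted space. So while the broad outline (existence, BBH upper bound, Struwe-type bad-ball analysis, interior harmonic limit) coincides, the two steps that carry the low-regularity burden — the potential estimate without Pohozaev and the boundary clearing-out — are missing or incorrect in your proposal.
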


This theorem can be easily generalized to the case $\mathrm{deg}(g) \in \Z^*$, since once the difficulty due to the weak regularity of the data at the boundary is overcome for one bad disk, then it is overcome for all. Hence, Theorems VI.1 and VI.2 of \cite{BBH} hold true in our setting. In the case $\mathrm{deg}(g) = 0$, it is even easier, since we do not need to remove any singularity and section \ref{secinteriorsingularity} is not necessary.  
The energy estimates on the Ginzburg-Landau energy given by Jerrad and Sandier allow $H^{\frac{1}{2}}$ boundary data and Lipschitz domains, as done in \cite{MRS}. 
For $\mathcal{C}^1$ domains with $H^{\frac{1}{2}}$ data at the boundary, the case $\mathrm{deg}(g)=0$ was simply solved in \cite{FM}. 
Notice that not only Theorem \ref{theoGL} extends the known results to much lower regularity but we also use and extend the natural and simple techniques due to the originated paper by BBH.

The assumptions of this theorem are optimal in the sense that the presence of the Dirichlet energy requires the $H^{\frac{1}{2}}$ condition on $g$, and $H^{\frac{1}{2}}\left(\Gamma\right)$ only makes sense if $\Gamma$ is a chord-arc curve. However, in the proof we only use that the pullback of $g$ by a uniformization of the domain belongs to $H^{\frac{1}{2}}(\mathbb{S}^1,\mathbb{S}^1)$. Hence, taking this as a definition of $H^{\frac{1}{2}}(\Gamma,\mathbb{S}^1)$ and defining  
$\mathcal{A} = \{ v\circ f^{-1}\} + H_0^1\left(\Omega,\mathbb{R}^2 \right)$, where $v$ is the harmonic extension of $g\circ f$ for some uniformization $f:\D \rightarrow \Omega$, we can even weaken the regularity of the boundary.
\begin{theo}
Let $\Omega$ be a quasidisk and $g \in H^{\frac{1}{2}}(\Gamma , \mathbb{S}^1)$ in the sense of definition \ref{H12Q}, then the conclusion of Theorem \ref{theoGL} holds.
\end{theo}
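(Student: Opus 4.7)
\begin{pr}
The plan is to inspect the proof of Theorem~\ref{theoGL} and check that the chord-arc regularity of $\Gamma$ enters only through the pullback construction that has now been used to define $\mathcal{A}$ itself; granted this, Theorem~\ref{theoGL} applies verbatim.

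Concretely, fix a uniformization $f : \D \to \Omega$; since $\Omega$ is a quasidisk, $f$ extends to a homeomorphism $\overline{\D} \to \overline{\Omega}$. Set $\tilde g := g \circ f \in H^{\frac{1}{2}}(\mathbb{S}^{1}, \mathbb{S}^{1})$ (of degree $1$), let $v \in H^{1}(\D, \R^{2})$ be its harmonic extension, and put $V := v \circ f^{-1}$. By the conformal invariance of the Dirichlet energy in two dimensions, $V \in H^{1}(\Omega, \R^{2})$, so $\mathcal{A}$ is non-empty and the direct method provides a minimizer $u_{\eps}$ of $E_{\eps}$ on $\mathcal{A}$ (maximum principle gives $|V|\le 1$, hence the potential term on $V$ is bounded by $|\Omega|$, which is finite).

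For the asymptotics, the change of variable $\tilde u := u \circ f$ converts $E_{\eps}$ into the weighted functional
$$ \tilde E_{\eps}^{\rho}(\tilde u) = \frac{1}{2}\int_{\D}|\nabla \tilde u|^{2} + \frac{1}{4\eps^{2}}\int_{\D}(1-|\tilde u|^{2})^{2}\,\rho, \qquad \rho := |f'|^{2}, $$
and $u_\eps$ minimizes $E_\eps$ on $\mathcal{A}$ if and only if $\tilde u_\eps := u_\eps \circ f$ minimizes $\tilde E_\eps^{\rho}$ over $v + H_0^1(\D, \R^2)$. Running the BBH scheme of Theorem~\ref{theoGL} on the smooth disk with $H^{\frac{1}{2}}$ boundary data $\tilde g$ produces $\tilde a \in \D$ and a harmonic map $\tilde u_{\star} : \D \setminus \{\tilde a\} \to \mathbb{S}^{1}$ satisfying the analogues of (i)--(vi) on $\D$. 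Setting $a := f(\tilde a)$ and $u_{\star} := \tilde u_{\star} \circ f^{-1}$, the conclusions on $\Omega$ follow by pushforward: harmonicity and degree (i),~(iii) are conformally invariant; the boundary condition (ii) is encoded in $u_{\star} - V \in H_{0}^{1}(\Omega,\R^2)$; the interior $C^{\infty}_{loc}$ convergence (iv) uses that $f$ is a biholomorphism on $\D$; the $H^{1}_{loc}(\overline{\Omega}\setminus\{a\})$ convergence (v) uses conformal invariance of the Dirichlet energy together with the fact that $f : \overline{\D} \to \overline{\Omega}$ is a homeomorphism, so compact sets $K\subset \overline{\Omega}\setminus\{a\}$ are sent to compact sets $f^{-1}(K)\subset \overline{\D}\setminus\{\tilde a\}$; and the pointwise statement (vi) transfers directly under $f^{-1}$.

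The main obstacle is to verify that the bad-disk analysis and the Jerrard-Sandier boundary lower bounds used in Theorem~\ref{theoGL}, which were designed to handle boundary data of only $H^{\frac{1}{2}}$ regularity, survive the introduction of the weight $\rho$ in the penalization term. This is expected to be benign: $\rho = |f'|^{2}$ is positive and bounded above and below on every interior compact subset of $\D$, so the standard BBH interior estimates go through up to harmless multiplicative constants, while the boundary estimates are controlled by the conformally invariant Dirichlet part of the energy and are therefore unaffected by $\rho$. Hence the full argument of Theorem~\ref{theoGL} carries over to $\tilde E_\eps^{\rho}$ on the smooth disk $\D$, which proves the theorem.
\end{pr}
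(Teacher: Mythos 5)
Your overall plan — pull everything back to $\D$ via a uniformization $f$, turn $E_\eps$ on $\Omega$ into a weighted functional $\tilde E_\eps^\rho$ with $\rho=|f'|^2$ in the penalization, and then ``run BBH on the disk'' — is a genuinely different organization from the paper's. The paper does not push the whole problem onto a weighted disk problem: it works directly on $\Omega$ and uses the pullback locally inside specific lemmas where it helps (the extension lemma on quasidisks, Proposition~\ref{propfarboundary}, the $\varphi_\eps$ argument in Proposition~\ref{lastprop}). The paper's implicit proof of this theorem is just the observation that Section~3 was written so as to use only the quasidisk hypothesis together with $g\circ f\in H^{1/2}(\mathbb{S}^1)$; the chord-arc assumption in Theorem~\ref{theoGL} was only there to give $H^{1/2}(\Gamma)$ its classical intrinsic meaning.

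The gap in your argument is exactly where you wave it away. You write that $\rho$ is ``positive and bounded above and below on every interior compact subset of $\D$'' (true, but irrelevant) and that ``the boundary estimates are controlled by the conformally invariant Dirichlet part of the energy and are therefore unaffected by $\rho$'' (false as stated). The penalization term is \emph{not} conformally invariant — that is precisely why the pullback produces the weight $\rho$ in the first place — and near $\partial\D$ the weight $|f'|^2$ can blow up or vanish; the paper singles this out as the central obstruction. The whole novelty of the paper lives at the boundary: Proposition~\ref{propfarboundary} shows that bad disks cannot touch $\partial\Omega$, and its proof requires the Koebe distortion estimate \eqref{KoebeSchwartz} to tame $|f'|^2$ near $\partial\D$, the weighted Wente-type estimate of Lemma~\ref{LLP} (applied exactly to a right-hand side carrying the factor $|f'|^2$), the quasiconformal distortion estimate \eqref{distorsionlemma2}, and the fact that the harmonic extension of an $H^{1/2}(\mathbb{S}^1,\mathbb{S}^1)$ map is uniformly proper (Lemma~\ref{Klem}). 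None of this is available by quoting ``Theorem~\ref{theoGL} on the smooth disk'': that theorem is for the unweighted Ginzburg-Landau functional, and there is no theorem in the paper about minimizers of $\tilde E_\eps^\rho$. So the sentence ``this is expected to be benign'' is where the missing work lives, and without it the proof does not go through.

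A secondary issue: you invoke Jerrard-Sandier boundary lower bounds, but the lower $\log$-estimate \eqref{eqlowerlogest} in the paper comes from a del Pino--Felmer-style extension to a smooth domain $\Omega'\supset\Omega$ (Lemma~\ref{extl}), which already packages the quasidisk hypothesis; if you transplant everything to $\D$, you must also redo this extension step for the weighted problem, which you do not address.
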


Indeed, one main idea to handle the low regularity of $\Omega$ is to pull back all the problem on the unit disk $\mathbb{D}$ thanks to one uniformization map $f : \mathbb{D}\to \Omega$. The unit disk is then a pleasant smooth domain, but we have to pay one price : the pullback functions $\tilde{u} = u \circ f$ for $u\in \mathcal{A}$ now belong to the $H^1$ space associated to the Riemannian metric $\left\vert f'(z) \right\vert^2 \left(dx^2+dy^2\right)$ which is the weighted space of functions having the following norm finite:

$$\left\|  u \right\|_{H^1\left(\Omega\right)}^2 = \left\| \tilde{u} \right\|_{H^1\left(\mathbb{D},  \left\vert f' \right\vert^2 \right)}^2 :=  \int_{\mathbb{D}} \tilde{u}^2 \left\vert f'(z) \right\vert^2  dz +  \int_{\mathbb{D}} \left\vert \nabla \tilde{u} \right\vert^2  dz  \hskip.1cm.$$

If $\Omega$ is a smooth domain, the weight $\left\vert f'(z) \right\vert^2$ satisfies a Harnack inequality \textit{up to the boundary}, but if $\Omega$ has low regularity, $\left\vert f'(z) \right\vert^2$ may go to $+\infty$ or $0$ as $\left\vert z \right\vert \to 1$ so that the geometry might be wildly different to the Euclidean one close to the boundary. 
In our setting, we assume that $\Omega$ is a quasidisk. 
It is a sufficient regularity for the domain in order to make the behavior of $\left\vert f'(z) \right\vert^2$ close to the boundary good enough to perform a BBH analysis on sequences of minimizers $u_{\eps}$. 
One reason is because the shape of disks $\mathbb{D}_r(x)$ centered in $x\in \Omega$ is not too deformed after the pullback by $f$, even when $x$ is close to $\Gamma$. This is because uniformization maps of quasidisks can be extended to quasiconformal maps on $\mathbb{C}$.

\medskip{}

For instance, one striking and fundamental lemma in the classical Ginzburg-Landau analysis is the uniform bound of $\eps \left\vert \nabla u_{\eps} \right\vert$ (see Step B.1 \cite{BBH2}) on $\Omega$. It is primordial to control the behavior of bad disks (small disks centered on points $x$ such that $\vert u_\eps(x) \vert \leq \frac{1}{2}$). Indeed, this estimate leads to the $\eta$-compactness (there is a quantum of energy around points such that $\vert u_\eps\vert \leq \frac{1}{2}$), see Lemma 24 of \cite{Ri}. Classically, this estimate is a consequence of the elliptic equation satisfied by $u_{\eps}$ as a critical point of $E_{\eps}$:
$$ - \Delta u_{\eps} = \frac{1}{\eps^2} \left(1-\left\vert u_{\eps} \right\vert^2 \right) u_{\eps} \text{ in }  \Omega  \hskip.1cm.$$
The argument relies on the bound $\left\|  u_{\eps} \right\|_{\infty} \leq 1$ coming from a maximum principle argument and then on a rescaling argument on the equation. However, with low regularity on $\Omega$, we cannot perform any maximum principle argument nor any rescaling argument close to the boundary. We first need to assume that $u_{\eps}$ is a minimizer of the energy in order to have the uniform bound $\left\|  u_{\eps} \right\|_{\infty} \leq 1$, and then, a rescaling argument only allows that $\eps \left\vert \nabla u_{\eps}(x) \right\vert $ is uniformly bounded on a set of points $x\in \Omega$ such that $\frac{d(x,\partial\Omega)}{\eps}$ is uniformly lower bounded. The main novelty of our paper (see Proposition \ref{propfarboundary}) is that there is $\eta_0>0$ such that for the points $x_{\eps} \in \Omega$ satisfying $d(x_{\eps},\partial\Omega) \leq \eta_0 \eps$, we have $\left\vert  u_{\eps}(x_{\eps}) \right\vert \geq \frac{1}{2}$. We crucially use the quasidisk assumption here. Thanks to this proposition, the classical \enquote{bad disks} cannot intersect the boundary of $\Omega$ and the classical steps of the BBH asymptotic analysis can be managed.

\medskip

Notice that Theorem \ref{theoGL} is only stated for simply connected domains and in Theorem \ref{propWeylPeterssonframeenergy} below, we give a link between the harmonic map $u_*$ and the uniformization of domains. However, our analysis provides all the tools for domains $\Omega$ such that $\partial \Omega$ is a disjoint union of chord-arc closed curves, and $H^{\frac{1}{2}}$ data of any degree on these closed curves. 
Then it would be interesting to give new constructions of uniformizations in the spirit of \cite{YDY}, \cite{JGHW}, see also \cite{Oudet}.

As already remarked, a uniformization $f : \mathbb{D}\to \Omega$ defined a harmonic map. Here we give a more precise description in terms of Coulomb frame. We set $p=f(0)$ and we let
$$ u = \frac{ f_{\theta}}{\left\vert  f_\theta \right\vert} \circ f^{-1} \text{ and } v = \frac{ f_r }{\left\vert  f_r \right\vert} \circ f^{-1} $$
the pushforward in $\Omega$ of the normalized angular and radial derivatives of $f$. Let $\omega =\langle d u ,v\rangle$ the Cartan form associated to $(v, u)$. We prove in Proposition \ref{propcanonicaluniformization} that this frame is Coulomb and that
$$ \star \omega = d\left(\mu +G_p \right) \hbox{ in } \Omega\hskip.1cm, $$
where $G_p$ is the Dirichlet Green function with respect to $p$ and that
$$ \left\vert \frac{f''}{f'} \right\vert^2 = \left\vert \nabla \left( \mu \circ f \right) \right\vert^2 \hbox{ in } \mathbb{D} \text{ and then } \int_{\mathbb{D}} \left\vert \frac{f''}{f'} \right\vert^2 dz = \int_{\Omega}  \left\vert \nabla \mu \right\vert^2 dz \hskip.1cm .$$

By definition, the last integral is finite if the boundary curve is Weil-Petersson. As briefly explained above, looking at the case $g=\tau$, where $\tau$ is the tangent vector field on $\Gamma$ such that the frame $(\nu, \tau)$ is a direct frame, where $\nu$ is the out-pointing normal of the bounded simply connected domain $\Omega$, the limit $u_*$ in Theorem \ref{theoGL} satisfies the following result, proved in section \ref{linkrenormalizeduniformization}:

\begin{theo} 
\label{propWeylPeterssonframeenergy} 
We assume that $\Gamma:=\partial \Omega$ is a Weil-Petersson curve. Let $u_* : \Omega\to \mathbb{S}^1$ be given by Theorem \ref{theoGL} with Dirichlet condition $u_*  = \tau$ on $\Gamma$. Then $u_* = u_a$ is the pushforward on $\Omega$ of normalized angular derivative of uniformization maps $f_a$ such that $f_a(0)=a$, where $a$ is the singularity of $u_*$. Moreover, the uniformization maps $f_a$ such that $f_a(0)=a$ maximize $\left\vert f'(0) \right\vert$ among all the uniformization maps $f$.
\end{theo}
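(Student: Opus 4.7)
\emph{Plan of proof.} The approach is to identify $u_*$ with one of the explicit harmonic maps $u_a$ built from uniformizations via Proposition~\ref{propcanonicaluniformization}, and then to use the BBH variational characterization of the singularity location to pick out $a = a^*$. For any $a\in\Omega$, Proposition~\ref{propcanonicaluniformization} exhibits $u_a = (f_\theta/|f_\theta|)\circ f_a^{-1}$ as a harmonic $\mathbb{S}^1$-valued map on $\Omega\setminus\{a\}$ with boundary data $\tau$ and degree one at $a$, and a direct check shows that $u_a$ is independent of the residual rotational ambiguity in the choice of $f_a$ (precomposing $f_a$ by a rotation of $\D$ leaves the pushforward invariant). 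By Theorem~\ref{theoGL}, $u_*$ has the same structure with some singularity $a^*\in\Omega$. The ratio $u_*\overline{u_{a^*}}$ then lifts to $e^{i\psi}$ on $\Omega\setminus\{a^*\}$ with $\psi$ single-valued harmonic, zero on $\partial\Omega$, and bounded near $a^*$, so $\psi\equiv 0$ and $u_* = u_{a^*}$.

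To identify $a^*$, I would compute the BBH renormalized energy of $u_a$ by pulling back to $\D$. Using $u_a\circ f_a = (iz/|z|)(f_a'/|f_a'|)$ and conformal invariance, $|\nabla(u_a\circ f_a)|^2$ splits into $|z|^{-2}$, a cross term that vanishes upon integration by parts (since $\partial_\nu\arg z = 0$ on concentric circles and $\arg f_a'$ is single-valued harmonic on $\D$), and $|\nabla\arg f_a'|^2 = |f_a''/f_a'|^2$. Pulling back $\Omega\setminus B(a,\rho)$ onto (approximately) $\D\setminus B(0,\rho/|f_a'(0)|)$ and subtracting the universal divergence $2\pi\ln(1/\rho)$ yields
\begin{equation*}
W(a) \;=\; \int_\D \left|\frac{f_a''}{f_a'}\right|^2 + 2\pi\ln|f_a'(0)|,
\end{equation*}
and the BBH variational principle selects $a^*$ as a minimizer of $W$; its adaptation to our Weil-Petersson setting is precisely what the analysis behind Theorem~\ref{theoGL} provides.

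It then remains to convert minimizing $W$ into maximizing $|f_a'(0)|$. Writing $f_a = f\circ\phi$ for a fixed uniformization $f$ and the Möbius automorphism $\phi$ of $\D$ sending $0$ to $\alpha := f^{-1}(a)$, a computation based on $\phi''/\phi' = -2\bar\alpha/(1+\bar\alpha z)$, the identity $|f_a'(0)| = |f'(\alpha)|(1-|\alpha|^2)$, and the identification of the cross term arising in the expansion of $|f_a''/f_a'|^2$ via the Taylor expansion of $\log f'$ at $\alpha$, shows that $\int_\D|f_a''/f_a'|^2 + 4\pi\ln|f_a'(0)|$ is actually independent of $a$ (depending only on $\Gamma$). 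Hence $W(a) = C(\Gamma) - 2\pi\ln|f_a'(0)|$, and minimizing $W$ over $a$ is equivalent to maximizing $|f_a'(0)|$, as claimed. The main obstacle I anticipate is justifying the renormalized-energy expansion and the BBH variational selection of $a^*$ up to the Weil-Petersson boundary: since $\Gamma$ is not Lipschitz, classical boundary regularity arguments are unavailable, and one must rely on the boundary non-vanishing estimate (Proposition~\ref{propfarboundary}) and on the quasidisk structure developed earlier in the paper.
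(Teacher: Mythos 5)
Your plan is essentially the paper's: identify $u_*$ with the pushforward $u_{a^*}$, establish $W(a) = \int_\D |f_a''/f_a'|^2 + 2\pi\ln|f_a'(0)|$, note that the combination $\int_\D|f_a''/f_a'|^2 + 4\pi\ln|f_a'(0)|$ is a constant of $\Gamma$ (this is Proposition \ref{renormenerunif}, where the paper cites a known lemma rather than computing via M\"obius conjugation as you sketch), and invoke the BBH characterization of $a^*$ as a minimizer of $W$. Where you differ in technique — deriving $W(a)$ by pulling back to $\D$ and book-keeping the scaling of the excised ball, versus the paper's direct integration-by-parts in $\Omega$ against the Dirichlet Green function $G(x)=\ln|x-a|+h(x)$ with $h(a)=-\ln|f_a'(0)|$ — both routes are legitimate and yield the same formula.

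The one place where your sketch papers over a genuine point is the identification $u_*=u_{a^*}$. You write $u_*\overline{u_{a^*}}=e^{i\psi}$ with $\psi$ single-valued harmonic and ``bounded near $a^*$,'' and conclude $\psi\equiv 0$. But boundedness near $a^*$ is exactly what must be proved, and it does \emph{not} follow from the qualitative properties (i)--(iii) of Theorem \ref{theoGL} alone. Take $\Omega=\D$, $a^*=0$: both $e^{i\theta}$ and $e^{i(\theta+\lambda\ln r)}$ are smooth harmonic $\mathbb{S}^1$-valued maps on $\D\setminus\{0\}$ of degree one at the origin with the same boundary trace, and their ratio has unbounded $\psi=\lambda\ln r$. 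What kills the spurious $\lambda\ln r$ mode is the sharp energy upper bound $\int_{\Omega\setminus\D_\delta(a)}|\nabla u_*|^2 \le 2\pi\ln(1/\delta)+\kappa$ from Claim \ref{propconvstruwe} (equation \eqref{eqenergyestimatefara}): an extra $\lambda\neq 0$ would inflate the coefficient of $\ln(1/\delta)$ from $2\pi$ to $2\pi(1+\lambda^2)$. The paper carries this out in Proposition \ref{cldefoftildemu}, writing $d(\star\omega)=\beta\,\delta_a$ and using the energy estimate to force $\beta=0$, after which $u_*$ is the canonical harmonic map of Theorem-Definition \ref{canonicalharmonicmap}, and uniqueness there pins it to $u_{a^*}$ via Proposition \ref{propcanonicaluniformization}. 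You should make this energy-based elimination of the logarithmic mode explicit; without it the step ``so $\psi\equiv 0$'' is unsupported.

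Two smaller remarks. First, when you pull back to $\D$ to compute $W(a)$, the ``approximate'' disk $\D\setminus B(0,\rho/|f_a'(0)|)$ is fine for the leading $2\pi\ln(1/\rho)$ term, but you should note that the $o(1)$ error in replacing $f_a^{-1}(B(a,\rho))$ by a round disk is controlled by $f_a'(0)\neq 0$; this is what the paper's Green-function identity $h(a)=-\ln|f_a'(0)|$ packages cleanly. Second, the variational selection of $a^*$ as a minimizer of $W$ (your ``BBH variational principle'') is Proposition \ref{clminimizerrenormalizedenergy}, whose proof in the low-regularity setting does rest on the boundary estimates you flag — your instinct there is right, and those are exactly the ingredients (Proposition \ref{propfarboundary} and the quasidisk pullback) that make the argument go through.
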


From this theorem, we deduced that the renormalized energy of frames in a sense inspired by the BBH renormalized energy is closely related to the Loewner energy of the domain, see \cite{Wan} for a detailed presentation about the Loewner Energy.
However, in some sense, only a half part of the Loewner energy appears in our construction since the frame is only defined on $\Omega$. Therefore, in \cite{MW}, Michelat and Wang give some reinterpretation of the Loewner energy of a domain in terms of this new frame energy setting. Their work is complementary to ours.

\medskip

We also believe that our techniques are perfectly adapted to the analysis of the Ginzburg-Landau functional on surfaces. Some very interesting works are already given on closed immersed surfaces (see \cite{IJ}). In fact, the present article, with the one of \cite{MW}, are the first stones 
to define a renormalized frame energy in the case of surfaces with boundary but also with low regularity, ideally some weak immersion with boundary as the one defined by Rivière for closed surface, see \cite{Ri2}.

\medskip

\textbf{Acknowledgements :} This paper is part of a common project between Michelat and Wang and the two authors of the current paper, on the links between the Loewner energy and the Willmore energy. We would like to thank both of them for all the fruitfull discussions on the topic - more precisely, Yilin for her explanation of the Loewner Energy and Bishop's work, and Alexis for our conversations about Ginzburg-Landau - especially, Section \ref{secinteriorsingularity} owes him a lot.

\section{Motivations} \label{motivations}

By an idea coming from Chern \cite{Chern} (see also section 5.4 of \cite{HEL1996}), using moving frames in the Cartan formalism is more flexible in order to build conformal coordinates. This point of view is particularly interesting in higher dimension, where we can replace the absence of conformal coordinates by the existence of Coulomb frames. We briefly remind why searching an optimal frame gives rise to conformal coordinates in dimension $2$. The main idea consists in building a Coulomb frame in a neighborhood $U$ of a point on a surface $\Sigma$, i.e.  an orthonormal family $\vec{e}_1, \vec{e}_2 \in \Gamma(TU)$ such that 
\begin{equation}
\label{Coul}
d* (\langle \vec{e}_1 , d\vec{e}_2\rangle)=0.
\end{equation}
This could be done by minimizing the Dirichlet energy of the frame, assuming that locally the total curvature is small enough. Then we deduce from \eqref{Coul} that there is $\lambda \in C^\infty(U)$ such that 
\begin{equation}\label{Liebracket} [e^{\lambda} \vec{e}_1, e^{\lambda} \vec{e}_2]=0. \end{equation}
We obtain the desired conformal coordinates after integration.

It is important to note that {\it a priori}, this construction is purely local for two reasons. The first one is that if $\Sigma$ has some topology then the space of frames can be empty. The second is that to get some control on the frame we need that the total curvature does not exceed a certain level (see \cite{HEL1996} and \cite{Scha} for more details). Of course, on planar domains, there are no such problem. In this case, the minimizing Coulomb frame is given by the trivial frame but it doesn't see any geometry of the domain. However, since the boundary has a natural frame given by a tangent and a normal unit vector field, it seems interesting to consider it as Dirichlet boundary data for the Coulomb frame.

Let us consider a simply connected domain $\Omega$. A tangent unit vector field of $\Gamma := \partial\Omega$ can be seen as a map from $\Gamma $ to $\mathbb{S}^1$ of degree one. Then, it does not have any regular extension in $\Omega$ which still takes values into $\mathbb{S}^1$. Anyway, if we are able to construct a (singular) Coulomb frame with the tangent and exterior normal unit vector fields as Dirichlet boundary data, then by integrating it, we find a conformal map from the disk to our domain, which is nothing but a uniformization of the domain.

\begin{figure}[!h]
\includegraphics[page=1,scale=1.02]{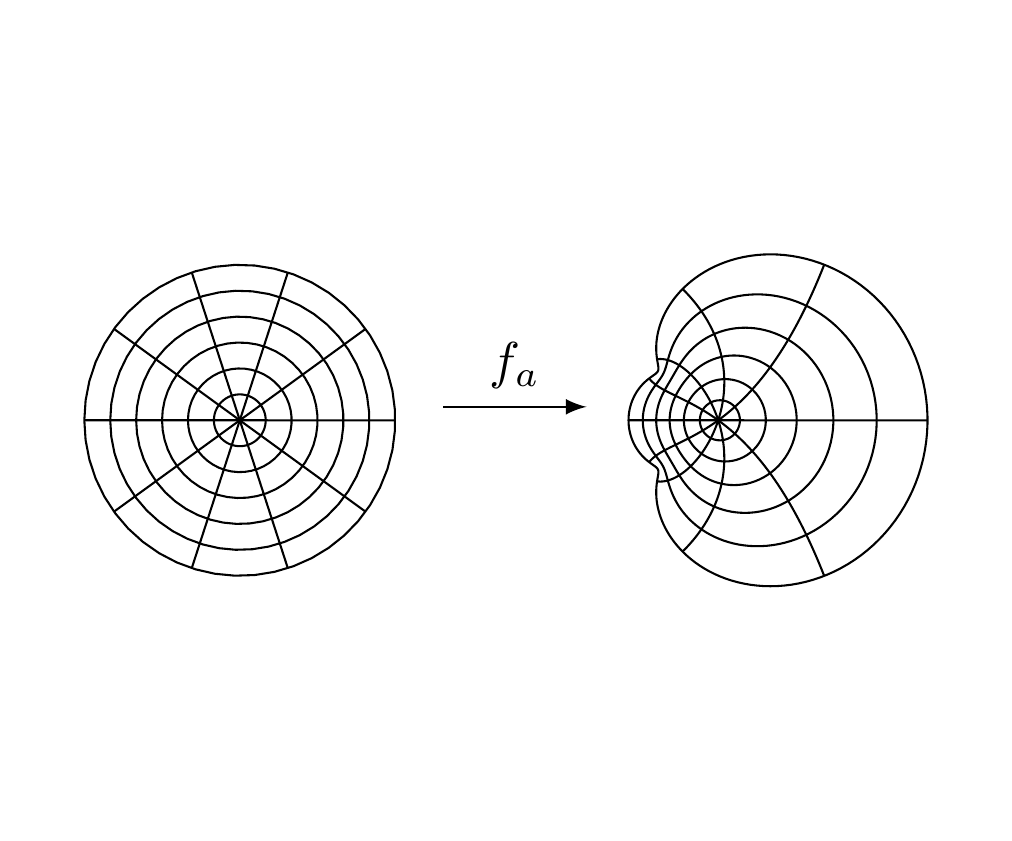}
\caption{A generic uniformization}
\end{figure}
\begin{figure}[!h]
\includegraphics[page=2,scale=1.02]{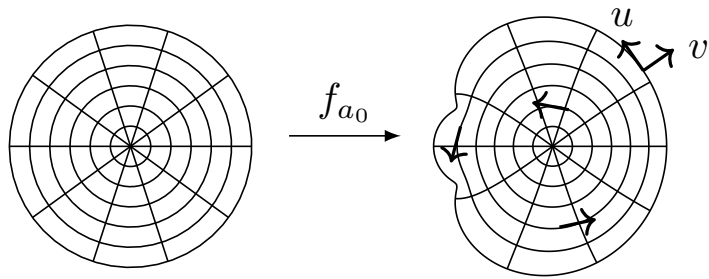}
\caption{The uniformazition given by the 'minimizing' frame}
\end{figure}

From this remark, we observe in proposition \ref{propcanonicaluniformization} that one natural \textit{global} moving frame $(v,u)$ is defined by the pushforward of the orthonormal polar coordinates on the disk by a uniformization map $f : \mathbb{D} \to \Omega$ defined by 
\begin{equation}
\label{uv}
 u = \frac{ f_{\theta}}{\vert f_{\theta}\vert} \circ f^{-1}  \text{ and } v = \frac{ f_{r}}{\vert f_{r}\vert} \circ f^{-1},
 \end{equation}
 where $f_\theta$ and $f_r$ denotes respectively the angular and radial derivatives of $f$. As expected, this frame has a singularity, located at $f(0)$. 
Note that for any choice of $a\in \Omega$, we can associate such a frame having the singularity $a$: it suffices to choose a uniformization $f : \mathbb{D}\to \Omega$ such that $f(0)=a$ by transitivity of the Mobius group in the disk. This uniformization is unique up to a rotation in the disk.
 The advantage of the frame approach is that since the frame is built by minimization then it should give a \enquote{best} Coulomb frame and hence a \enquote{best} uniformization. 

Nevertheless, as already pointed above, the space of smooth (even $H^1$) frames is empty by some degree obstruction. The problem of finding harmonic extensions of vector fields despite some topological obstruction is exactly the goal of the BBH analysis. Thanks to their seminal work, we know that on a smooth simply connected domain $\Omega$ there exists $u\in C^{\infty}(\Omega \setminus\{a_0\})$ such that $u$ is tangent to the boundary and $(-iu,u)$ is a coulomb frame. Moreover $a_0$ minimizes the renormalized energy 
$$ W(a) = \lim_{\delta\to 0}\left( \int_{\Omega \setminus \mathbb{D}_{\delta}(a)} \left\vert \nabla w \right\vert^2 - 2\pi \ln \frac{1}{\delta} \right),$$
where $w$ is the canonical harmonic vector field (see \cite{BBH}, Chapter I for details) matching with $u$ on the boundary and having its singularity at $a$. Hence among all the uniformizations, the \enquote{best} one is the one such that $f(0)=a_0$. In fact, $a_0$ is not necessary unique, but it is unique if the domain is not far from a disk, see \cite{CF} for the convex case.
Since we can conversely associate a harmonic vector field to some uniformization, thanks to \eqref{uv}, we can associate the following energy to each uniformization, see proposition \ref{renormalizedenergy}, 
\begin{equation}
\label{WPE}
W(f) = \int_{\mathbb{D}} \left\vert \frac{f''}{f'} \right\vert^2 + 2\pi \ln \left\vert f'(0) \right\vert .
\end{equation}
Then remarking that $$\int_{\mathbb{D}} \left\vert \frac{f''}{f'} \right\vert^2 +4\pi \ln \left\vert f'(0) \right\vert$$ is independent of the uniformization, the \enquote{best} one can be defined as the one that maximizes $\vert f'(0)\vert$. Of course, our method is not the first to determine the \enquote{best} conformal representation of a simply connected domain. For instance, in section 4.2 of \cite{Flu}, Flucher looks for an optimal conformal rearrangement by studying the Robin function of the domain. His motivation was the study of extremal functions for the Moser-Trudinger inequality. To our knowledge, it is the first time that the point of view of optimal frames is adopted, especially for domains with very low regularity. 

In our construction of a \enquote{best} uniformization presented above, we used the classical BBH analysis which is only valid for smooth domains. Having a careful look to the classical proofs, it seems that the regularity of the boundary can be reduced to $C^2$ but not much less since those proofs make use of the Pohozaev identity which requires a control on the derivative of the boundary data, that is to say here the derivative of the tangent unit vector field.

Hence, our main motivation is to be able to have the existence of the limit $u_{\star}$ for the minimizers of the Ginzburg-Landau functional as soon as \eqref{WPE} makes sense. In fact \eqref{WPE} is very similar to the Loewner energy of a Jordan curve $\Gamma$, given by
$$E_{L}(\Gamma)= \frac{1}{\pi}\left( \int_{\mathbb{D}} \left\vert \frac{f''}{f'} \right\vert^2 +\int_{\mathbb{D}} \left\vert \frac{g''}{g'} \right\vert^2 +4\pi \ln \left\vert \frac{f'(0)}{g'(\infty)}  \right\vert \right), $$
where $f:\D\rightarrow \Omega$, $g:\D^c \rightarrow \Omega^c$ are two uniformization with $\partial \Omega=\Gamma$. It is well-known that the class of domains to consider is the one enclosed by a Weil-Petersson curve (see next section for definition). Those curves are curves whose tangent unit vector field belongs to $H^\frac{1}{2}$.

Finally, to get the most general result for the asymptotic analysis of the Ginzburg-Landau energy, we decouple the regularity of the boundary $\partial \Omega$ and the regularity of the boundary data $g$ and we are able to prove the existence of a limit $u_*$ as soon as the boundary is chord-arc (not necessarily Weil-Petersson) and the boundary data $H^{\frac{1}{2}}$ in a classical sense. Moreover, giving a natural sense to the notion of $H^{\frac{1}{2}}$ maps for quasicircles (see next section), we are able to assume only that the boundary is a quasicircle. It is clear that this result is optimal, since the $H^{\frac{1}{2}}$ assumption cannot be weakened due to the presence of the Dirichlet energy, and Weil-Petersson curves are quasicircles. We then give the existence of a best uniformization in this weak setting.

To conclude, we want to notice that the uniformization was not exactly our first motivation. As explained in the introduction, this work has started by some discussions with Michelat and Wang on the way to rely quantitatively the Willmore energy to the Loewner Energy. To that purpose, we realized that we need the existence of a limit $u_*$ for the minimizers of the Ginzburg-Landau energy on a surface with a Weil-Petersson boundary. The present paper is then devoted to the case of a planar domains. It is interesting in itself since we have some purely BBH-type result with optimal regularity and a nice application to the problem of finding a best uniformization. From their side, Michelat and Wang have established some clear link of the frame approach and the Loewner-Energy. Indeed, they prove the existence of a consistent generalization of the frame energy \eqref{WPE} on $\Omega$ to a new frame energy defined on frames $(v_1,u_1)$ and $(v_2,u_2)$ defined on both sides of the curve $\Gamma$ viewed on the sphere by stereographic projection, denoted here by $W_{v,u}(\Gamma)$ (see \cite{MW} for a precise definition of this frame energy):
\begin{theo}[Michelat, Wang \cite{MW}, Theorem A]
Let $\Gamma \subset \mathbb{S}^2$ be a Weil-Petersson curve. Let $\Omega_1, \Omega_2$ be the two connected components of $\Omega \setminus \Gamma$. For $p_1 \in \Omega_1$ and $p_2 \in \Omega_2$, there are harmonic frames $(v_1,u_1) : \Omega_1\setminus \{p_1\} \to U\Omega_1$ and $(v_2,u_2) : \Omega_2\setminus \{p_2\} \to U\Omega_2$ such that for conformal maps $f_1 : \mathbb{D} \to \Omega_1 $ and $f_2 : \mathbb{D}\to \Omega_2$ such that $f_1(0)=p_1$ and $f_2(0) = p_2$, then
$$ E_{L}(\Gamma) = \frac{1}{\pi}W_{v,u}(\Gamma) + 2 \ln \left\vert \nabla f_1(0) \right\vert + 2 \ln \left\vert \nabla f_2 (0) \right\vert - 12 \ln(2),$$
where $W_{v,u}(\Gamma)$ is the renormalized energy of the frames $(v_1,u_1)$ and $(v_2,u_2)$.
\end{theo}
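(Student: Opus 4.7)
My strategy is to reduce the identity to an explicit computation using the canonical uniformization-based description of the harmonic frame provided by Proposition~\ref{propcanonicaluniformization} and the explicit formula \eqref{WPE}. Since Theorem~\ref{propWeylPeterssonframeenergy} identifies any harmonic unit frame with tangential boundary values equal to the unit tangent of $\Gamma$ as the pushforward of the polar frame by a uniformization, the Michelat--Wang frames $(v_i, u_i)$ must coincide (up to a global rotation) with those built from conformal maps $f_i : \mathbb{D} \to \Omega_i$ with $f_i(0) = p_i$ on each side $\Omega_i$ of $\Gamma \subset \mathbb{S}^2$.

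First I would establish a one-sided frame-energy identity: arguing as in Proposition~\ref{propcanonicaluniformization}, the renormalized Dirichlet energy of $(v_i, u_i)$ on $\Omega_i$, obtained by excising a geodesic ball $B_\delta(p_i)$ and subtracting $2\pi\log\tfrac{1}{\delta}$ as $\delta \to 0$, should equal $\int_{\mathbb{D}} |f_i''/f_i'|^2 + 2\pi\ln|f_i'(0)|$, up to an additive constant $c_i$ depending only on the chosen stereographic chart of $\mathbb{S}^2$ around $p_i$ (this is where the round metric distortion enters). Summing over $i=1,2$ and dividing by $\pi$ gives
\begin{equation*}
\frac{1}{\pi} W_{v,u}(\Gamma) = \frac{1}{\pi}\!\int_{\mathbb{D}}\!\left|\frac{f_1''}{f_1'}\right|^2 + \frac{1}{\pi}\!\int_{\mathbb{D}}\!\left|\frac{f_2''}{f_2'}\right|^2 + 2\ln|f_1'(0)| + 2\ln|f_2'(0)| + \frac{c_1+c_2}{\pi}.
\end{equation*}

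Next I would compare this with the Loewner energy. The definition of $E_L(\Gamma)$ uses $g : \mathbb{D}^c \to \Omega^c$ in the plane, while we want $f_2 : \mathbb{D} \to \Omega_2$ viewed on the sphere. Composing with the inversion $z \mapsto 1/z$ together with stereographic projection from the antipode of $p_1$, and using the Möbius invariance of $\int |f''/f'|^2$ together with the chain rule for the pre-Schwarzian, I expect to obtain
\begin{equation*}
\int_{\mathbb{D}}\left|\frac{g''}{g'}\right|^2 = \int_{\mathbb{D}}\left|\frac{f_2''}{f_2'}\right|^2 \quad \text{and} \quad \ln\!\left|\frac{f'(0)}{g'(\infty)}\right| = \ln|f_1'(0)| + \ln|f_2'(0)| + \kappa,
\end{equation*}
with $\kappa$ a universal constant coming from the stereographic conformal factor at the basepoints. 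Substituting back, the formula in the statement reduces to the identity $(c_1+c_2)/\pi + (\text{contribution of } \kappa) = -12\ln 2$.

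The main obstacle is exactly this bookkeeping of constants. Three conventions must be tracked simultaneously and consistently: the distinction between $|f'(0)|$ and the real gradient $|\nabla f(0)| = \sqrt{2}\,|f'(0)|$ appearing on the right-hand side, the stereographic factor $2/(1+|z|^2)$ which contributes logarithmic weights both to the renormalized frame energy (via the geodesic-ball subtraction on $\mathbb{S}^2$ rather than on $\mathbb{C}$) and to the Jacobians $f_i'(0)$ measured in the spherical versus the flat metric, and the choice of basepoint for the exterior uniformization $g$ in the Loewner definition. A secondary difficulty is justifying the renormalized-energy expansion for $W_{v,u}$ in the Weil-Petersson regularity class, where the boundary is merely chord-arc with $H^{1/2}$ tangent; this is precisely the setting in which Theorem~\ref{theoGL} and the interior-singularity analysis developed in this paper are required to control the limit and legitimize the BBH-type renormalization.
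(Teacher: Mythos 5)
This statement is not proved in the paper you were given: it is Theorem~A of Michelat and Wang, cited as \cite{MW} purely for context and motivation, so there is no ``paper's own proof'' here to compare against. The authors explicitly defer to \cite{MW} for both the precise definition of the frame energy $W_{v,u}(\Gamma)$ on the sphere and the proof of the identity relating it to $E_L(\Gamma)$; the present paper only supplies the planar, one-sided ingredients used in that proof, namely Proposition~\ref{propcanonicaluniformization}, Proposition~\ref{renormenerunif}, and Theorem~\ref{propWeylPeterssonframeenergy}.

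That said, your reconstruction of the strategy is structurally sound and does reflect how the pieces fit together: on each component $\Omega_i$ the canonical harmonic frame is the pushforward of the polar frame by a uniformization $f_i$ with $f_i(0)=p_i$, its renormalized energy is given by $\int_{\mathbb{D}}\left|f_i''/f_i'\right|^2 + 2\pi\ln\left|f_i'(0)\right|$ up to a chart-dependent correction, and summing over $i=1,2$ and matching against the definition of $E_L$ reduces everything to a bookkeeping of the stereographic conformal factor, the $\sqrt{2}$ discrepancy between $\left|f'(0)\right|$ and $\left|\nabla f(0)\right|$, and the normalization of $g'(\infty)$. Where your sketch is genuinely incomplete is exactly where you flag it yourself: you stop short of pinning down the constants $c_1$, $c_2$, $\kappa$ and verifying they sum to $-12\ln 2$, and you do not define $W_{v,u}(\Gamma)$ intrinsically on $\mathbb{S}^2$ (that definition lives in \cite{MW}, not in this paper). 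Since the intended reference is an entire companion article, the honest answer is that closing those gaps is the content of \cite{MW} rather than something this paper asks you to do; your proposal is a reasonable high-level outline of that external proof, but it cannot be checked against the present manuscript.
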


\section{Notations and definitions} \label{notdef}

\begin{itemize}[label={-}]
\item $\D$ denotes the open unit disk of the complex plane $\C$ identified with $\R^2$.
\item $\D_r=r \D$ and $\D_r(x)$ the open disk centered at $x$ with radius $r$. 
\item $S_r=\partial \D_r$
\item $A_{r_1,r_2}=\mathbb{D}_{r_2} \setminus \mathbb{D}_{r_1}$
\item $A^c$ is the complement of a given set $A$.
\end{itemize}

We remind some definition about quasiconformal maps, see \cite{Hu} for more details.
\begin{theodefi}[quasiconformal maps]
\label{KQCLQS}
Let $U,V$ two open sets of $\mathbb C$ and $K\geq 1$. A mapping $f:U\rightarrow V$ is $K$-quasiconformal if it is a homeomorphism and
\begin{enumerate}[label=\roman*)]
\item $f \in H^1_{loc}(U)$,
\item $\left\vert \frac{\partial f}{\partial \bar{z}} \right\vert \leq \frac{K-1}{K+1} \left\vert \frac{\partial f}{\partial z} \right\vert$ almost everywhere.
\end{enumerate}
It is equivalent to be $L$-quasisymmetric 
of modulus $\eta$ that is to say, there exists $\eta: \R^+ \rightarrow \R^+$ a homeomorphism (depending only on $K$) such that for any three distinct points $x,y,z \in U$ we have
$$ \left\vert \frac{f(x)-f(y)}{f(x)-f(z)}\right\vert \leq \eta \left( \left\vert\frac{x-y}{x-z}\right\vert\right).$$
\end{theodefi}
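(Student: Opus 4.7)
The plan is to establish the two implications separately, with the conformal modulus of curve families as the central tool. Recall that for a family $\Gamma$ of locally rectifiable curves in a planar domain, its modulus $\mathrm{mod}(\Gamma)$ is the infimum of $\int \rho^2\,dz$ over Borel densities $\rho$ satisfying $\int_\gamma \rho\,|dz| \ge 1$ for every $\gamma\in\Gamma$. This quantity has the decisive advantage of being a conformal invariant, and it transforms in a controlled way under quasiconformal maps.

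For the forward direction ($K$-quasiconformal implies $\eta$-quasisymmetric): the main analytic input is that any $K$-quasiconformal homeomorphism quasi-preserves modulus, $K^{-1}\,\mathrm{mod}(\Gamma) \le \mathrm{mod}(f(\Gamma)) \le K\,\mathrm{mod}(\Gamma)$, which I would derive from the change-of-variables formula combined with the Beltrami inequality (ii). Given three distinct points $x,y,z\in U$, set $t=|x-y|/|x-z|$ and apply this estimate to the Teichmüller ring separating $\{x,y\}$ from $\{z,\infty\}$: its modulus is an explicit monotone function of $t$, and by the Grötzsch extremal principle the modulus of a topological ring controls the ratio of separated radii. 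Transporting both modulus bounds through $f$ yields $|f(x)-f(y)|/|f(x)-f(z)|\le \eta(t)$ for some increasing homeomorphism $\eta$ depending only on $K$.

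For the converse ($\eta$-quasisymmetric implies $K$-quasiconformal): the first step is to show that $f$ is absolutely continuous on almost every horizontal and vertical line (the ACL property). The metric $\eta$-bound controls the oscillation of $f$ on a small segment by the diameter of its image on a neighboring orthogonal segment, and a covering-plus-Fubini argument converts this into the desired absolute continuity. Once $f$ is ACL, classical Sobolev arguments give $f \in H^{1}_{loc}$, and a theorem of Gehring--Väisälä provides differentiability almost everywhere. At a point of differentiability, the image of an infinitesimal circle is an ellipse whose axis ratio is bounded by a constant depending only on $\eta$, and this translates directly into the pointwise Beltrami inequality (ii) with $K=K(\eta)$.

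The main obstacle is the ACL property in the converse direction: extracting absolute continuity on almost every line from a purely metric dilatation bound requires a delicate Fubini-type argument exploiting the quasisymmetry modulus on dyadic scales, and the quantitative passage between $K$ and $\eta$ in both directions rests on sharp monotonicity and asymptotic properties of the Grötzsch and Teichmüller extremal functions. Since these estimates are lengthy and entirely classical, the full proof is deferred to the treatise \cite{Hu}, as is implicit in the statement above.
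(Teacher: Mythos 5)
The paper gives no proof of this statement. It is presented as a background Theorem--Definition with the remark ``see \cite{Hu} for more details,'' so you are being compared against a reference, not against an argument written in the paper. Your outline --- quasi-invariance of the conformal modulus of curve families to pass from the Beltrami inequality to a Teichm\"uller/Gr\"otzsch ring estimate and hence to a quasisymmetry modulus, and in the converse direction the ACL property followed by a.e.\ differentiability and the pointwise bound on the differential --- is the standard route found in Ahlfors, Lehto--Virtanen, V\"ais\"al\"a, and Hubbard, so there is no conflict of method.

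One genuine issue, though, is worth flagging, and it lives in the statement as much as in your sketch. The implication ``$K$-quasiconformal $\Rightarrow$ $\eta$-quasisymmetric with $\eta$ depending only on $K$'' is \emph{false} for arbitrary open sets $U\subset\mathbb C$: already the M\"obius self-maps of $\mathbb D$, which are $1$-quasiconformal, are not uniformly quasisymmetric (pushing two of the three points toward a boundary point where the map compresses makes the image ratio blow up while the domain ratio stays bounded). On a proper subdomain one only gets \emph{local} quasisymmetry, with a modulus that degenerates near $\partial U$. Your own argument betrays this: the Teichm\"uller ring you invoke separates $\{x,y\}$ from $\{z,\infty\}$, which only makes sense when $f$ is defined on all of $\mathbb C$ (or $\hat{\mathbb C}$). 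So what you are in fact sketching is the theorem for homeomorphisms of $\mathbb C$, which is the version that is true and is also exactly how the paper uses it --- always after applying Theorem~\ref{Extension} to extend the uniformization to a quasiconformal homeomorphism of $\mathbb C$. If you present this proof, you should state explicitly the hypothesis $U=V=\mathbb C$; as written, the claimed equivalence on general $U$ cannot be proved because it does not hold.
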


\begin{defi}[quasidisk, quasicircle]
A quasidisk is the image of $\D$ by a quasiconformal map from $\C$ into itself. A quasicircle is the boundary of a quasidisk.  
\end{defi}

Quasicircle can be very rough since it can even be a Julia set. Nevertheless, it can't be any Jordan curves since the cross-ratio must be bounded as shown by the following proposition

\begin{prop}
\label{qcp}
A Jordan curve $\Gamma \subset \mathbb C$ containing $1$ is a quasicircle if and only if there exists a constant $C$ such that for any $z_1,z_2,z_3 \in \Gamma$ that appears in that order\footnote{Which makes sense since $ 1 \in \Gamma$.} on $\Gamma$, we have 
\begin{equation}
\label{eqs}
\vert z_1 -z_2\vert \leq C \vert z_1 - z_3\vert .
\end{equation}
\end{prop}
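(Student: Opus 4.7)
This is Ahlfors' classical three-point (bounded-turning) characterization of quasicircles; I read ``$z_1,z_2,z_3$ in that order on $\Gamma$'' as placing $z_2$ on the subarc of $\Gamma$ bounded by $z_1,z_3$ whose diameter is comparable to $|z_1-z_3|$ (the usual Ahlfors setting). I would prove the two implications separately, the converse being by far the harder direction.

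For $(\Rightarrow)$, the plan is to pull the three points back to $\mathbb{S}^1$ via a defining quasiconformal homeomorphism. Let $F:\mathbb{C}\to\mathbb{C}$ be $K$-quasiconformal with $F(\mathbb{S}^1)=\Gamma$; by Theorem-Definition~\ref{KQCLQS}, $F$ is $\eta$-quasisymmetric for a modulus $\eta$ depending only on $K$. Setting $w_i = F^{-1}(z_i) \in \mathbb{S}^1$, the ordering of the $z_i$ on $\Gamma$ corresponds to that of the $w_i$ on the circle, and an elementary chord estimate on $\mathbb{S}^1$ (chord length is monotone along the shorter arc) gives $|w_1-w_2| \leq |w_1-w_3|$. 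The quasisymmetry inequality then delivers
$$ \frac{|z_1-z_2|}{|z_1-z_3|} \;=\; \frac{|F(w_1)-F(w_2)|}{|F(w_1)-F(w_3)|} \;\leq\; \eta\!\left(\frac{|w_1-w_2|}{|w_1-w_3|}\right) \;\leq\; \eta(1), $$
so one may take $C = \eta(1)$.

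For $(\Leftarrow)$, the plan is to construct a quasiconformal reflection $\sigma:\mathbb{C}\to\mathbb{C}$ across $\Gamma$, i.e.\ an orientation-reversing quasiconformal involution fixing $\Gamma$ pointwise and swapping the two components of $\mathbb{C}\setminus\Gamma$. Existence of such a $\sigma$ is well known to be equivalent to $\Gamma$ being a quasicircle: conjugating the Euclidean inversion $z\mapsto 1/\bar z$ (whose fixed set is $\mathbb{S}^1$) by a quasiconformal homeomorphism of $\mathbb{C}$ carrying $\mathbb{S}^1$ onto $\Gamma$ produces such a $\sigma$, while conversely $\sigma$ together with Riemann uniformizations of the two components of $\mathbb{C}\setminus\Gamma$ assembles such a homeomorphism. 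To produce $\sigma$ from the three-point condition, I would follow the classical Ahlfors--Whitney procedure: Riemann-uniformize each component $\Omega_j$ of $\mathbb{C}\setminus\Gamma$ onto a standard domain, take a Whitney-type decomposition of each side, match pieces on opposite sides of $\Gamma$ using their boundary footprints on $\Gamma$, and define $\sigma$ piecewise as a bi-Lipschitz identification between matched pieces.

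\textbf{Main obstacle.} The crux is showing that the pieced-together $\sigma$ has uniformly bounded pointwise dilatation up to $\Gamma$, and this is exactly where \eqref{eqs} is used as a genuine geometric input. The three-point condition is precisely the bounded-turning property—the full subarc between any two points has diameter controlled by their chord distance—which prevents cusp-like degenerations and forces matched Whitney pieces on opposite sides of $\Gamma$ to have comparable diameter and distance to $\Gamma$. Making the matching construction rigorous, and controlling the dilatation of $\sigma$ across matched interfaces and up to $\Gamma$, is the technical heart of Ahlfors' original argument and the only step in the proof where \eqref{eqs} does more than bookkeeping work.
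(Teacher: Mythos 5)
The paper does not prove this proposition; it is stated without argument as the classical Ahlfors three-point (bounded-turning) characterization of quasicircles and is invoked only once, in the proof of Claim~\ref{clarc}. There is therefore no ``paper's own proof'' to compare against, and any complete argument would in any case follow Ahlfors, which is what your sketch gestures at. (Taken completely literally, the ordering convention in the statement actually fails even for $\Gamma=\mathbb{S}^1$ -- take $z_1,z_3$ near $1$ on opposite sides with $z_2$ diametrically opposite; the intended reading is the one you supply, with $z_2$ on the subarc of smaller diameter.)

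On the content of your sketch: in the $(\Rightarrow)$ direction the step ``$|w_1-w_2|\le|w_1-w_3|$ by monotonicity of the chord along the shorter arc'' does not follow from your own interpretation of the hypothesis, because $F^{-1}$ has no reason to carry the small-diameter subarc of $\Gamma$ to the \emph{short} arc of $\mathbb{S}^1$; under your reading $w_2$ may well land on the long arc, and then the chord inequality is simply false. The cheap fix is to reverse the logic: for every $z_2\in F(\alpha)$, where $\alpha$ is the shorter arc of $\mathbb{S}^1$ between $w_1,w_3$, quasisymmetry gives $|z_1-z_2|\le\eta(1)|z_1-z_3|$ (and symmetrically $|z_3-z_2|\le\eta(1)|z_1-z_3|$), so \emph{one} of the two subarcs of $\Gamma$ between $z_1$ and $z_3$ has diameter at most $2\eta(1)|z_1-z_3|$ -- which is exactly bounded turning. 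For the converse you correctly identify the key object (a quasiconformal reflection across $\Gamma$ built by Whitney-type matching of the two sides) and correctly flag where \eqref{eqs} enters, but you carry out none of the construction or the dilatation bound; as written that direction is a pointer to Ahlfors, not a proof.
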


We will make a central use of the following theorem which permits to extend uniformization of quasidisks to quasiconformal maps from $\C$ into itself.

\begin{theo}
\label{Extension}
Any uniformization map from $\D$ to a quasidisk admits a $K$-quasiconformal extension from $\C$ to itself.
\end{theo}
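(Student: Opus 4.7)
The plan is to reduce the statement to the classical Schwarz reflection principle for quasiconformal maps. Since $\Omega$ is a quasidisk, the definition supplies a $K_0$-quasiconformal map $F : \C\to\C$ with $F(\D)=\Omega$. Given the uniformization $f:\D\to\Omega$, I form
$$\phi := F^{-1}\circ f : \D \longrightarrow \D.$$
As $f$ is holomorphic (hence $1$-quasiconformal) and $F^{-1}$ is $K_0$-quasiconformal, $\phi$ is a $K_0$-quasiconformal self-homeomorphism of $\D$. Moreover, since $\D$ is a Jordan domain and $F^{-1}$ is already defined on all of $\C$, the map $\phi$ extends by continuity to a homeomorphism of $\overline{\D}$ that sends $\mathbb{S}^1$ to $\mathbb{S}^1$.

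Next I extend $\phi$ to a quasiconformal self-map of $\C$ by reflection across $\mathbb{S}^1$:
$$\Phi(z) := \begin{cases} \phi(z), & |z|\leq 1, \\ 1/\overline{\phi(1/\bar z)}, & |z|>1. \end{cases}$$
Since the involution $z\mapsto 1/\bar z$ is anticonformal on $\C\setminus\{0\}$ and fixes $\mathbb{S}^1$ pointwise, $\Phi$ is a well-defined homeomorphism of $\C$ that is $K_0$-quasiconformal on both $\D$ and $\overline{\D}^{\,c}$ (anticonformal conjugation preserves the maximal dilatation). The unit circle, being a real-analytic arc, is a quasiconformally removable set: one checks that $\Phi\in H^1_{loc}(\C)$ and that the Beltrami inequality $|\partial_{\bar z}\Phi|\leq \frac{K_0-1}{K_0+1}|\partial_z\Phi|$ holds almost everywhere on $\C$, so that $\Phi$ is globally $K_0$-quasiconformal on the plane.

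Finally, I set $\tilde f := F\circ \Phi : \C\to\C$. As a composition of quasiconformal maps, $\tilde f$ is at worst $K_0^2$-quasiconformal, and for $z\in\D$ one has
$$\tilde f(z)=F(\phi(z))=F(F^{-1}(f(z)))=f(z),$$
so $\tilde f$ is the desired quasiconformal extension of the uniformization. The only nontrivial ingredient is the removability of $\mathbb{S}^1$ used in the middle step, which is what guarantees that the two $K_0$-quasiconformal pieces of $\Phi$ glue into a single honest quasiconformal map of the plane rather than merely a piecewise one; this is classical and can be found in Ahlfors' lectures or in Lehto--Virtanen.
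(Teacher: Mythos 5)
Your overall strategy---factor $f=F\circ\phi$ with $\phi:=F^{-1}\circ f:\D\to\D$ quasiconformal, extend $\phi$ to the plane, and compose back with $F$---is sound and is essentially the standard reduction; the dilatation bookkeeping and the appeal to removability of $\mathbb{S}^1$ are both correct. But the reflection step has a normalization gap that breaks the conclusion as stated. For $|z|>1$ one has $\Phi(z)=1/\overline{\phi(1/\bar z)}\to 1/\overline{\phi(0)}$ as $z\to\infty$, so $\Phi$ fixes $\infty$ precisely when $\phi(0)=0$, i.e.\ when $F(0)=f(0)$. The quasidisk definition hands you some $F$ with $F(\D)=\Omega$, but nothing in your argument arranges $F(0)=f(0)$. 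In the generic case $\Phi$ is a quasiconformal self-homeomorphism of the Riemann sphere $\C\cup\{\infty\}$ that sends $\infty$ to the finite point $1/\overline{\phi(0)}$ and sends the finite point $1/\overline{\phi^{-1}(0)}$ to $\infty$; consequently $\tilde f=F\circ\Phi$ has a pole at $1/\overline{\phi^{-1}(0)}$ and is not a self-map of $\C$. A minimal example: $\Omega=\D$, $F=\mathrm{id}$, $f(z)=(2z+1)/(z+2)$. Then $\phi=f$, the reflection reproduces the same M\"obius transformation on the outside, and your $\tilde f$ has a pole at $z=-2$.

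The gap is repairable, but it needs an explicit step. One fix: replace $F$ at the outset by $F\circ\tau$, where $\tau$ is a quasiconformal self-homeomorphism of $\C$ preserving $\D$ and sending $0$ to $b:=F^{-1}(f(0))$. Such a $\tau$ exists because the M\"obius map $m(z)=(z+b)/(1+\bar b z)$ is a smooth bi-Lipschitz diffeomorphism of $\overline{\D}$ (its pole $-1/\bar b$ lies outside $\overline{\D}$) and any bi-Lipschitz self-map of $\overline{\D}$ extends to a bi-Lipschitz, hence quasiconformal, self-map of $\C$. With this normalization $\phi(0)=0$ and your reflection argument goes through verbatim. The alternative, which is how the theorem is usually proved, is to drop the reflection entirely: extend $\phi$ continuously to $\overline{\D}$, observe that $\phi|_{\mathbb{S}^1}$ is a quasisymmetric homeomorphism of the circle, extend it to a quasiconformal homeomorphism $\Psi$ of the exterior $\C\setminus\overline{\D}$ fixing $\infty$ by the Beurling--Ahlfors or Douady--Earle construction, and glue $\phi$ and $\Psi$ across $\mathbb{S}^1$ using the removability you already invoked. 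That route respects the normalization at $\infty$ automatically and yields a quantitative bound for the dilatation in terms of $K_0$ alone.
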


Now we are restricting our attention to a special class of quasicircles, the class of Weil-Petersson curves. Recently Bishop, see \cite{Bishop}, gives no less than 26 equivalent definitions for Weil-Petersson curves. The most adapted one to our setting is the following, which makes him said that 'roughly speaking, Weil-Petersson curves are to $L^2$ as quasicircles are to $L^\infty$'.
 
\begin{theodefi}[Definition 1 of \cite{Bishop}]
\label{thbishop} A Jordan curve $\Gamma$ enclosing a domain $\Omega$ is Weil-Petersson if it is a quasicircle and it admits a uniformization map $f :\D \rightarrow \Omega$ such that 
$$\int_\D \left\vert \frac{f''}{f'}\right\vert^2 \, dz <  \infty .$$
In particular it is rectifiable and a chord-arc curve, i.e. there exists $C$ such that for any subarc $\gamma\subset \Gamma$, we have
$$l(\gamma) \leq C \vert x-y\vert,$$
where $x,y$ are the end points of $\gamma$. 
\end{theodefi}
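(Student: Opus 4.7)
The plan is to read off both conclusions from the observation that the hypothesis $\int_{\D} \left|\frac{f''}{f'}\right|^2 dz < \infty$ is exactly the statement that $\varphi := \log f'$ (well-defined and holomorphic on the simply connected disk since $f'$ does not vanish) belongs to the classical holomorphic Dirichlet space $\mathcal{D}(\D)$. I would open by recalling the standard fact that every element of $\mathcal{D}(\D)$ has nontangential boundary values almost everywhere on $\mathbb{S}^1$ lying in $H^{1/2}(\mathbb{S}^1)$, and hence in $\mathrm{VMO}(\mathbb{S}^1)$ via the classical embedding $H^{1/2} \hookrightarrow \mathrm{VMO}$.

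For rectifiability I would apply the Moser--Trudinger inequality on the circle to $u := \log|f'| = \mathrm{Re}\,\varphi|_{\mathbb{S}^1}$: from $u \in H^{1/2}(\mathbb{S}^1)$ one deduces $\int_{\mathbb{S}^1} e^{\alpha u^2}\, d\theta < \infty$ for some $\alpha > 0$, and then Young's inequality $pu \leq \alpha u^2 + p^2/(4\alpha)$ gives $|f'| = e^u \in L^p(\mathbb{S}^1)$ for every $p < \infty$. Since $\Omega$ is bounded by a Jordan curve, Carath\'eodory's theorem extends $f$ to a homeomorphism $\overline{\D} \to \overline{\Omega}$, and the $L^1$-integrability of the boundary trace of $f'$ (combined with the F.\ and M.\ Riesz theorem to ensure absolute continuity of $f|_{\mathbb{S}^1}$) gives
$$ l(\Gamma) = \int_0^{2\pi} |f'(e^{i\theta})|\, d\theta < \infty. $$

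For the chord-arc estimate I would combine three ingredients. First, the quasicircle hypothesis and theorem \ref{Extension} make $f|_{\mathbb{S}^1}$ quasisymmetric, so that for a subarc $\gamma = f(I)$ with endpoints $x, y$ the chord $|x-y|$ is comparable to $\mathrm{diam}\, f(I)$. Second, Koebe's distortion theorem applied at the Whitney point $z_I \in \D$ above the midpoint of $I$ gives $|f'(z_I)| \asymp \mathrm{diam}\, f(I)/|I|$. Third, from $\log|f'| \in \mathrm{VMO}(\mathbb{S}^1)$ together with the quasisymmetry of $f|_{\mathbb{S}^1}$, $|f'|$ is a Muckenhoupt $A_\infty$ weight with respect to arclength, which yields the reverse-Jensen bound
$$ \frac{1}{|I|}\int_I |f'|\, d\theta \;\lesssim\; \exp\!\left(\frac{1}{|I|}\int_I \log|f'|\, d\theta\right) \;\lesssim\; |f'(z_I)|, $$
where the last step uses the mean value property of the harmonic function $\log|f'|$ in $\D$. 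Multiplying by $|I|$ and chaining the three estimates yields $l(\gamma) = \int_I |f'|\, d\theta \lesssim |x-y|$, which is chord-arc.

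The main obstacle is the $A_\infty$ step in the chord-arc argument: $\mathrm{VMO}$ alone does not imply $A_\infty$, so one genuinely exploits the finer Dirichlet-space strength of $\log f'$ together with the quasicircle structure. This is precisely one of the equivalences encoded in Bishop's list \cite{Bishop}, and in a polished write-up I would simply invoke the corresponding implication of his theorem rather than reconstruct it from scratch.
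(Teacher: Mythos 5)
The paper itself furnishes no proof of this statement: the environment is a \emph{Theorem-Definition} that defines Weil--Petersson curves by the Dirichlet-integral condition and simply cites Bishop (Definition 1 of \cite{Bishop}) for the rectifiability and chord-arc consequences. So your final move --- ``simply invoke the corresponding implication of his theorem'' --- is in fact exactly what the paper does. What remains is to judge the sketch you offer on its own terms.

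The rectifiability half of your sketch is essentially sound, but your appeal to the F.~and~M.~Riesz theorem is misplaced. That theorem is what one needs for the \emph{converse} implication ($\Gamma$ rectifiable $\Rightarrow f' \in H^1$). In the direction you want, once you know that $u := \mathrm{Re}\,\log f'$ has an $L^1(\mathbb{S}^1)$ exponential (which your Moser--Trudinger step gives with room to spare), you upgrade to $f' \in H^1$ directly via Jensen's inequality applied to the Poisson extension: $|f'(re^{i\theta})| = e^{(P_r * u)(\theta)} \le (P_r * e^{u})(\theta)$, hence $\sup_{r<1}\int_{S_r}|f'|\,d\theta \le \|e^{u}\|_{L^1(\mathbb{S}^1)} < \infty$. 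Membership of $f'$ in $H^1$ then yields absolute continuity of $f|_{\mathbb{S}^1}$ and $\ell(\Gamma)=\int_{\mathbb{S}^1}|f'|\,d\theta$ by elementary means, without F.~and~M.~Riesz.

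For the chord-arc half, you have correctly identified the skeleton (chord--diameter equivalence from the Ahlfors three-point condition of Proposition \ref{qcp}, Koebe distortion at the Whitney point, and a reverse-Jensen inequality for $|f'|$ on boundary arcs), and you have also correctly flagged where the proof actually lives: the reverse-Jensen, i.e.\ $A_\infty$, step. This step is a genuine gap, not a routine one --- $\log|f'|\in\mathrm{VMO}$ (equivalently, $\Gamma$ asymptotically conformal) does not by itself yield the uniform reverse-Jensen bound you write, and establishing it from the Dirichlet-space condition together with the quasicircle structure is precisely the nontrivial content hiding behind the phrase ``in particular it is \ldots a chord-arc curve.'' Since the paper handles that via a black-box citation to Bishop, your proposal, taken as a proof, is incomplete where it most matters; taken as a sketch pointing to the right literature, it is consistent with what the paper does.
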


\begin{defi}[$H^\frac{1}{2}$ maps for chord-arc curves] Let $n\geq 1$ and $\Gamma$ be a chord-arc curves, in particular rectifiable. If $g :\Gamma \rightarrow \R^n$ satisfies
$$\int_\Gamma \int_\Gamma \left\vert \frac{g(x)-g(y)}{\vert x- y\vert} \right\vert^2 \, d\mu(x)\, d\mu (y) <\infty,$$
where $\mu$ is the measure of $\Gamma$, we say that $g \in H^\frac{1}{2}(\Gamma,\R^n)$ and we  denote $\Vert g \Vert_{H^{\frac{1}{2}}(\Gamma)}$ the left-hand side of the previous inequality. In particular, $g \in H^\frac{1}{2}(\Gamma,\mathbb{S}^1)$ if $g \in H^\frac{1}{2}(\Gamma,\R^2)$ and $g(x) \in \mathbb{S}^1$ for a.e. $x\in \Gamma$.
\end{defi}

In the previous definition we could have only assumed that $\Gamma$ is rectifiable, but we will need the following important proposition. It is due to Bishop, see page 16 of \cite{Bishop}, and it relies on the fact that quasisymmetric maps preserve the $H^{\frac{1}{2}}$-norm by Ahlfors and Beurling, see page 129 of \cite{AB}.

\begin{theo}
\label{H12} Let $\Gamma$ be a chord-arc curve enclosing a domain $\Omega$ and $f :\D \rightarrow \Omega$ be a uniformization, then there exists $C>0$ depending only on $\Gamma$, such that for every $g \in H^\frac{1}{2}(\Gamma,\R^n)$ we have
$$\frac{1}{C} \Vert g \Vert_{H^{\frac{1}{2}}(\Gamma)} \leq \Vert g\circ f \Vert_{H^{\frac{1}{2}}(\mathbb{S}^1)} \leq C \Vert g \Vert_{H^{\frac{1}{2}}(\Gamma)} .$$
\end{theo}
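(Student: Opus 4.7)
My plan is to factor the boundary trace of $f$ through an arc-length parametrization so as to reduce the statement to the classical invariance of $H^{1/2}(\mathbb{S}^1)$ under quasisymmetric self-homeomorphisms of the circle cited from \cite{AB}. More precisely, since $\Gamma$ is chord-arc it is rectifiable of some finite length $L$, so it admits an arc-length parametrization $\gamma : \mathbb{S}^1 \to \Gamma$ (after rescaling time by $L/2\pi$). I will then write the restriction of the uniformization as $f|_{\mathbb{S}^1} = \gamma \circ \phi$, where $\phi := \gamma^{-1} \circ f|_{\mathbb{S}^1} : \mathbb{S}^1 \to \mathbb{S}^1$, and estimate the two factors separately.

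First, the chord-arc hypothesis asserts the comparability $|\gamma(\xi) - \gamma(\zeta)| \asymp |\xi - \zeta|$ for all $\xi,\zeta \in \mathbb{S}^1$, with constants depending only on the chord-arc constant of $\Gamma$. Writing the definition of $\| g \|_{H^{1/2}(\Gamma)}$ as a double integral and changing variables $x = \gamma(\xi)$, $y = \gamma(\zeta)$, so that $d\mu = d\xi$ up to rescaling, immediately gives
$$
\tfrac{1}{C} \| g \|_{H^{1/2}(\Gamma)} \; \leq \; \| g \circ \gamma \|_{H^{1/2}(\mathbb{S}^1)} \; \leq \; C \, \| g \|_{H^{1/2}(\Gamma)}.
$$

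Second, I claim that $\phi$ is a quasisymmetric self-homeomorphism of $\mathbb{S}^1$. Indeed, since $\Gamma$ is chord-arc it is in particular a quasicircle, so $\Omega$ is a quasidisk and Theorem \ref{Extension} provides a $K$-quasiconformal extension of $f$ to all of $\mathbb{C}$; by Theorem-Definition \ref{KQCLQS} its boundary values $f|_{\mathbb{S}^1} : \mathbb{S}^1 \to \Gamma$ are $L$-quasisymmetric in the chordal metric. Composing the quasisymmetry inequality for $f|_{\mathbb{S}^1}$ with the bi-Lipschitz comparison $|\gamma(\xi) - \gamma(\zeta)| \asymp |\xi - \zeta|$ from the previous paragraph produces, for any three distinct points $x,y,z \in \mathbb{S}^1$, a bound of the desired form
$$
\frac{|\phi(x) - \phi(y)|}{|\phi(x) - \phi(z)|} \leq C\, \eta\!\left( \frac{|x-y|}{|x-z|}\right),
$$
with $\eta$ depending only on $K$ and on the chord-arc constant. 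Hence $\phi$ and $\phi^{-1}$ are quasisymmetric self-homeomorphisms of $\mathbb{S}^1$, with constants depending only on $\Gamma$.

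Finally, I invoke the Ahlfors-Beurling result at p.~129 of \cite{AB}: precomposition by a quasisymmetric self-homeomorphism of $\mathbb{S}^1$ defines a bounded operator on $H^{1/2}(\mathbb{S}^1, \mathbb{R}^n)$, with operator norm controlled solely by the quasisymmetry modulus. Applying this to $\phi$ and $\phi^{-1}$ yields
$$
\tfrac{1}{C'} \| g \circ \gamma \|_{H^{1/2}(\mathbb{S}^1)} \leq \| (g \circ \gamma) \circ \phi \|_{H^{1/2}(\mathbb{S}^1)} = \| g \circ f \|_{H^{1/2}(\mathbb{S}^1)} \leq C' \,\| g \circ \gamma \|_{H^{1/2}(\mathbb{S}^1)},
$$
and combining with the first step concludes. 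The genuinely hard input is the Ahlfors-Beurling invariance itself, which is taken as a black box; the rest of the argument is simply a careful change of variables exploiting the chord-arc hypothesis and the quasiconformal extension guaranteed by Theorem \ref{Extension}.
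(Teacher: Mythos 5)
Your argument is correct and is essentially the proof the paper points to: the paper does not write out the details but cites Bishop, whose proof proceeds exactly as you do, by factoring the boundary trace through an arc-length parametrization (where the chord-arc property gives a bi-Lipschitz identification of $H^{1/2}(\Gamma)$ with $H^{1/2}(\mathbb{S}^1)$) and then invoking the Ahlfors--Beurling invariance of $H^{1/2}(\mathbb{S}^1)$ under precomposition by quasisymmetric circle homeomorphisms. The one input you quote as a black box, namely that this invariance holds with constants depending only on the quasisymmetry modulus, is precisely the reference to \cite{AB} that the paper also treats as the genuinely hard input.
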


This theorem leads us to the following definition of $H^{\frac{1}{2}}\left(\Gamma,\mathbb{S}^1\right)$ maps 

\begin{defi}[$H^\frac{1}{2}(\Gamma, \mathbb{S}^1)$ maps]
\label{H12Q} Let $\Gamma$ be a quasicircle enclosing a domain $\Omega$, and $f:\D \rightarrow \Omega$ a uniformization. We say that $g :\Gamma \rightarrow \mathbb{S}^1$ is in $ H^\frac{1}{2}(\Gamma,\mathbb{S}^1)$ if $g\circ f \in H^\frac{1}{2}(\mathbb{S}^1,\mathbb{S}^1)$ and we set $\Vert g \Vert_{H^{\frac{1}{2}}(\Gamma)}=\Vert g\circ f \Vert_{H^{\frac{1}{2}}(\mathbb{S}^1)}$.
\end{defi}

Notice that there are subtleties with this definition. Indeed, let $v$ be the harmonic extension of $g\circ f \in H^\frac{1}{2}(\mathbb{S}^1,\mathbb{S}^1)$. Then, knowing both $v \in H^1(\mathbb{D},\mathbb{R}^2)$ and $v\circ f^{-1} \in H^1(\Omega,\mathbb{R}^2) $ makes our definition consistent. We explain why in the appendix \ref{appendixexplainH12}. 

\medskip

Now we define the degree of a weak maps, see \cite{BN} and the appendix of \cite{BGP}.

\begin{theo} Let $g\in H^{\frac{1}{2}}(\mathbb{S}^1,\mathbb{S}^1)$, then there exists an integer $n$ and $\phi \in H^{\frac{1}{2}}(\mathbb{S}^1,\R)$, unique up to a multiple of $2\pi$, such that $g=z^n e^{i\phi}$. The integer $n$ is called the degree of $g$, and is denoted $\mathrm{deg}(g)$, moreover we have\footnote{This formula is well defined since $g\in H^\frac{1}{2}$ and $g_\theta \in H^{-\frac{1}{2}}$.}
\begin{equation}
\label{eqdeg}
\mathrm{deg}(g)=\frac{1}{2\pi} \int_{\mathbb{S}^1}  g\wedge  g_\theta  \, d\theta.
\end{equation}
\end{theo}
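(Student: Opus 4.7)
The proof rests on three ingredients: the Brezis--Nirenberg degree theory for VMO maps, the $H^{1/2}$ lifting theorem of Bourgain--Brezis--Mironescu, and the embedding $H^{1/2}(\mathbb{S}^1) \hookrightarrow \mathrm{VMO}(\mathbb{S}^1)$. The embedding makes $\mathrm{deg}(g)$ a well-defined integer and makes the degree continuous under $H^{1/2}$ convergence.

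To produce the decomposition, set $n := \mathrm{deg}(g)$ and $h := z^{-n} g$. Multiplication by the smooth function $z^{-n}$ preserves $H^{1/2}$, so $h \in H^{1/2}(\mathbb{S}^1, \mathbb{S}^1)$ with $\mathrm{deg}(h) = 0$. The BBM lifting theorem (cited in \cite{BGP}) gives $\phi \in H^{1/2}(\mathbb{S}^1, \mathbb{R})$ with $h = e^{i\phi}$, whence $g = z^n e^{i\phi}$. For uniqueness, suppose $z^n e^{i\phi} = z^m e^{i\psi}$; then $e^{i(\phi - \psi)} = z^{m-n}$ almost everywhere. The left-hand side has VMO degree zero, because the homotopy $t \mapsto e^{it(\phi - \psi)}$ is continuous in $H^{1/2}$ (hence in VMO) and links it to the constant map $1$. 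This forces $m = n$, and then $\phi - \psi \in 2\pi \mathbb{Z}$ a.e.; since this integer-valued function lies in $H^{1/2} \subset \mathrm{VMO}$, it must be constant a.e.

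For the formula \eqref{eqdeg}, write $g = e^{i(n\theta + \phi)}$. A direct computation shows $g \wedge g_\theta = n + \phi_\theta$, interpreted via the $H^{1/2}$--$H^{-1/2}$ pairing. Integrating,
$$
\frac{1}{2\pi}\int_{\mathbb{S}^1} g \wedge g_\theta \, d\theta = n + \frac{1}{2\pi}\int_{\mathbb{S}^1} \phi_\theta \, d\theta = n,
$$
since $\phi$ is single-valued on $\mathbb{S}^1$, so the pairing of $\phi_\theta \in H^{-1/2}$ against the constant $1 \in H^{1/2}$ vanishes. The main obstacle is the BBM lifting step itself: at the borderline exponent $s = 1/2$ on a one-dimensional manifold, this is a delicate result that must be invoked as a black box. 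A secondary subtlety, already flagged in the footnote of the statement, is giving rigorous meaning to the products $g \wedge g_\theta$ and $\phi_\theta$ via duality; once that is done, the remaining steps are algebraic.
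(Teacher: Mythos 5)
Your proof is correct and is the standard argument behind the sources the paper cites; the paper itself gives no proof of this theorem, referring only to \cite{BN} for the VMO degree and to the appendix of \cite{BGP} for the Bourgain--Brezis--Mironescu lifting at the critical exponent $sp=1$, so your outline (embed $H^{1/2}\hookrightarrow\mathrm{VMO}$ to define $\deg$, strip off $z^{n}$, lift the degree-zero remainder, and read off the integral formula through the $H^{1/2}$--$H^{-1/2}$ pairing) is precisely what one extracts from those references. The one point worth spelling out is the continuity in $H^{1/2}$ of the homotopy $t\mapsto e^{it(\phi-\psi)}$ when $\phi-\psi\in H^{1/2}(\mathbb{S}^1,\mathbb{R})$ may be unbounded: this does hold, because $H^{1/2}\cap L^\infty$ is a Banach algebra and $\|e^{i(t-s)(\phi-\psi)}-1\|_{H^{1/2}}\le |t-s|\,\|\phi-\psi\|_{H^{1/2}}$; alternatively you can sidestep the homotopy altogether by approximating $\phi-\psi$ in $H^{1/2}$ by smooth functions and invoking the stability of the VMO degree under BMO convergence, which at the same time gives, via the constancy theorem for VMO functions valued in the discrete set $2\pi\mathbb{Z}$, the final uniqueness step.
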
 

Thanks to Theorem \ref{H12}, we have

\begin{theodefi} For $\Gamma$ a chord-arc curve and $g\in H^\frac{1}{2}(\Gamma, \mathbb{S}^1)$, we define $\mathrm{deg}(g,\Gamma)=\mathrm{deg}(g\circ f)$, where $f$ is any uniformization map of the enclosing domain of $\Gamma$. 
\end{theodefi}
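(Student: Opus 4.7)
The plan is to verify that the proposed definition is independent of the choice of the uniformization $f$; the fact that $g \circ f \in H^{\frac{1}{2}}(\mathbb{S}^1,\mathbb{S}^1)$ so that the right-hand side $\mathrm{deg}(g \circ f)$ even makes sense is already granted by Theorem~\ref{H12}. So let $f_1, f_2 : \mathbb{D} \to \Omega$ be two uniformizations and set $\varphi := f_2^{-1} \circ f_1$. By the Riemann mapping theorem, $\varphi$ is an automorphism of $\mathbb{D}$, hence a Möbius transformation of the explicit form $z \mapsto e^{i\theta_0}\tfrac{z-a}{1-\bar a z}$. Since $\Gamma$ is a quasicircle (every chord-arc curve is Jordan and enclosed by a quasidisk), each $f_i$ extends to a homeomorphism $\overline{\mathbb{D}} \to \overline{\Omega}$ by Carathéodory's theorem, so $\varphi$ extends as a smooth diffeomorphism of $\overline{\mathbb{D}}$; in particular $\varphi|_{\mathbb{S}^1}$ is a smooth orientation-preserving diffeomorphism of $\mathbb{S}^1$ of degree one, and on the boundary we have the key identity
$$g \circ f_1 = (g \circ f_2) \circ \varphi.$$

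The strategy is now to transport the Brezis-Nirenberg factorization of $g\circ f_2$ through $\varphi$. Using the quoted factorization, write $g \circ f_2 = z^n e^{i\phi}$ with $n = \deg(g \circ f_2)$ and $\phi \in H^{\frac{1}{2}}(\mathbb{S}^1,\mathbb{R})$. Lifting $\varphi$ smoothly as $\varphi(e^{i\theta}) = e^{i(\theta + \alpha(\theta))}$ with $\alpha \in C^\infty(\mathbb{S}^1,\mathbb{R})$, the composition formula above gives
$$g \circ f_1(e^{i\theta}) = e^{i n (\theta + \alpha(\theta))}\, e^{i\phi(\varphi(e^{i\theta}))} = e^{i n \theta}\, e^{i\bigl(n\alpha(\theta) + \phi\circ\varphi(e^{i\theta})\bigr)}.$$
If the exponent $n\alpha + \phi\circ\varphi$ belongs to $H^{\frac{1}{2}}(\mathbb{S}^1,\mathbb{R})$, then the Brezis-Nirenberg factorization applied to $g \circ f_1$ and its uniqueness of integer exponent force $\deg(g \circ f_1) = n = \deg(g \circ f_2)$, which is the desired well-definedness.

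The one technical step is verifying that $\phi \circ \varphi \in H^{\frac{1}{2}}(\mathbb{S}^1,\mathbb{R})$, since $\phi$ is only $H^{\frac{1}{2}}$. This is the one piece of the argument that does not reduce to elementary manipulation of the degree formula. I expect this to follow either directly from a change of variables in the Gagliardo seminorm — $\varphi$ is a smooth diffeomorphism with bounded derivative and bounded inverse derivative — or as a special case of Theorem~\ref{H12} applied to $\varphi$ viewed as a uniformization of the disk onto itself, i.e.\ to the Ahlfors-Beurling invariance of $H^{\frac{1}{2}}$ under smooth quasisymmetric maps of $\mathbb{S}^1$. Once this regularity is granted, the conclusion is immediate. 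Finally, the same factorization argument shows that formula~\eqref{eqdeg} for $\deg(g \circ f)$ transports to an intrinsic formula on $\Gamma$, so that $\deg(g,\Gamma)$ is consistent with the degree of any continuous representative when $g$ happens to be continuous on $\Gamma$.
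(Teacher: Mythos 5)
Your argument is correct, and it fills a gap that the paper itself leaves implicit: the paper justifies the theodefi only by the phrase ``Thanks to Theorem~\ref{H12}, we have,'' which merely guarantees that $g\circ f$ lands in $H^{\frac12}(\mathbb{S}^1,\mathbb{S}^1)$ so that $\mathrm{deg}(g\circ f)$ is meaningful; the independence of the choice of $f$ is left to the reader. You correctly identify the essential point: two uniformizations differ by a M\"obius automorphism $\varphi$ of $\mathbb{D}$, whose restriction to $\mathbb{S}^1$ is a smooth, orientation-preserving, degree-one diffeomorphism, and pre-composition by such a $\varphi$ must preserve the degree. Your route through the Brezis--Nirenberg factorization $g\circ f_2 = z^n e^{i\phi}$ with the lift $\varphi(e^{i\theta})=e^{i(\theta+\alpha(\theta))}$ is valid: $n\alpha\in C^\infty\subset H^{\frac12}$, $\phi\circ\varphi\in H^{\frac12}$ since $\varphi|_{\mathbb{S}^1}$ is bi-Lipschitz (or quasisymmetric, invoking Ahlfors--Beurling as quoted in Theorem~\ref{H12}), and uniqueness of the integer exponent in the factorization forces $\mathrm{deg}(g\circ f_1)=n$. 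A slightly shorter route, more in the spirit of the paper, is to apply the distributional change of variables directly in formula~\eqref{eqdeg}: for $\hat g := g\circ f_2$ one has $\int_{\mathbb{S}^1}(\hat g\circ\varphi)\wedge \partial_\theta(\hat g\circ\varphi)\,d\theta = \int_{\mathbb{S}^1}\hat g\wedge \hat g_\theta\,d\theta$, since $\varphi$ is a smooth orientation-preserving diffeomorphism and the pairing of $H^{\frac12}$ with $H^{-\frac12}$ is equivariant under such. Both approaches buy the same conclusion; yours makes the role of the $H^{\frac12}$ group action through the factorization transparent, at the cost of an extra lemma. One small correction: the smoothness of $\varphi$ up to $\mathbb{S}^1$ does not come from the Carath\'eodory extension of the $f_i$ (which only yields a homeomorphism); it is intrinsic to $\varphi$ being a M\"obius automorphism of the disk. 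Carath\'eodory is needed, however, to make sense of $g\circ f_i$ as a genuine function on $\mathbb{S}^1$ and to verify the identity $g\circ f_1 = (g\circ f_2)\circ\varphi$ on $\mathbb{S}^1$, which is where you should invoke it.
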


In particular, see property 5 and corollary 8 of \cite{BN}, we have

\begin{theo}
\label{somdeg}
Let $\Omega$ a domain whose boundary consists of a finite number of quasicircle $(\Gamma_i)_{i=1}^k$, then if $u \in H^1(\Omega, \mathbb{S}^1)$ then
$$\sum_{i=1}^k \mathrm{deg}(u_{\vert \Gamma_i})=0 .$$
In particular, for $a\in \Omega$ and $u\in H^{1}(\Omega\setminus\{a\},\mathbb{S}^1)$,  we set $\mathrm{deg}(u,a)=\mathrm{deg}(u_{\vert \partial B(a,r)})$ for $r>0$ small enough.
\end{theo}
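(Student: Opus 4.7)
The plan is to reduce the quasicircle setting to the Lipschitz one, where the identity is established in \cite{BN} (Corollary 8), by an exhaustion of $\Omega$ from inside, and then to pass to the limit using the $H^{1/2}$-continuity of the degree (Property 5 of \cite{BN}).

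For each component $\Gamma_i$, let $\Omega_i$ be the quasidisk enclosed by $\Gamma_i$ and $f_i$ a uniformization $\D\to \Omega_i$ if $\Gamma_i$ is the outer component of $\partial\Omega$ (respectively $\C\setminus\overline{\D}\to \C\setminus\overline{\Omega_i}$ if $\Gamma_i$ bounds an inner hole). By Theorem \ref{Extension}, $f_i$ admits a $K$-quasiconformal extension to $\C$, and in particular extends to a homeomorphism of the closed parameter disk onto $\overline{\Omega_i}$. Pushing $\mathbb{S}^1$ slightly into $\Omega$ via $f_i$ yields, for $\delta>0$ small, real-analytic Jordan curves $\Gamma_i^\delta\subset\Omega$ that together bound a subdomain $\Omega^\delta\subset\subset \Omega$ with analytic boundary. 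Since $u\in H^1(\Omega^\delta,\mathbb{S}^1)$, Corollary 8 of \cite{BN} applies and yields
$$ \sum_{i=1}^k \mathrm{deg}\bigl(u_{\vert \Gamma_i^\delta}\bigr)=0. $$

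It remains to check that $\mathrm{deg}(u_{\vert \Gamma_i^\delta})\to \mathrm{deg}(u_{\vert \Gamma_i})$ as $\delta\to 0$. By definition, the former is the degree of $u\circ f_i$ restricted to a concentric circle close to $\mathbb{S}^1$ in the parameter domain. The quasiconformal extension of $f_i$ together with the standard change-of-variable inequality $|Df_i|^2\leq K\,J_{f_i}$ yields $u\circ f_i\in H^1$ of a one-sided annular neighborhood of $\mathbb{S}^1$ (the side mapped into $\Omega$), so its traces on concentric circles converge in $H^{1/2}(\mathbb{S}^1,\mathbb{S}^1)$ to $u\circ f_i|_{\mathbb{S}^1}$ as the radii approach $1$. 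Since the degree is locally constant on $H^{1/2}(\mathbb{S}^1,\mathbb{S}^1)$ by Property 5 of \cite{BN}, the two degrees agree for $\delta$ small enough, which concludes the first assertion.

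For the last assertion, apply the identity to the annulus $B(a,R)\setminus\overline{B(a,r)}$ with $0<r<R$ small enough that $\overline{B(a,R)}\subset\Omega$: the two circular boundaries inherit opposite orientations from the annulus, so the vanishing of their degree sum forces $\mathrm{deg}(u_{\vert \partial B(a,r)})$ to be independent of small $r$, legitimizing the definition of $\mathrm{deg}(u,a)$. The most delicate step of the argument is the trace convergence above: realizing $u\circ f_i$ as an $H^1$ function on an annular neighborhood of $\mathbb{S}^1$ rests essentially on the quasiconformal extension of the uniformization, which is precisely where the quasicircle hypothesis on $\Gamma_i$ is exploited.
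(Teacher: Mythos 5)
Your proof is correct, and it is close in spirit to what the paper intends: the paper gives no argument of its own for this theorem but simply cites property~5 and corollary~8 of \cite{BN}, and your exhaustion scheme is exactly how one reduces the quasicircle case to the Lipschitz case covered there. The construction of the interior analytic curves $\Gamma_i^\delta$ as images of concentric circles, the application of \cite{BN} on the resulting smooth annular domain, the $H^{1/2}$ trace convergence $v|_{\partial\D_{1-\delta}}\to v|_{\mathbb{S}^1}$ for $v\in H^1$ of an annulus, and the local constancy of the degree in $H^{1/2}(\mathbb{S}^1,\mathbb{S}^1)$ all fit together as claimed, and the annulus argument for $\mathrm{deg}(u,a)$ is standard.

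One point is slightly misidentified. You invoke the quasiconformal extension of $f_i$ and the bound $|Df_i|^2\le K\,J_{f_i}$ to conclude $u\circ f_i\in H^1$ of a one-sided annular collar of $\mathbb{S}^1$, and in the closing remark you single this out as the place where the quasicircle hypothesis is used. But on that collar (inside $\D$, or outside for an inner component) $f_i$ is a genuine conformal map, so $u\circ f_i\in H^1$ follows from plain conformal invariance of the Dirichlet energy together with $|u|\equiv 1$; no quasiconformal extension and no regularity of $\Gamma_i$ is needed for that step, and it would hold for an arbitrary Jordan domain. The quasicircle hypothesis is exploited elsewhere: it underlies the paper's Definition~\ref{H12Q} and Theorem~\ref{H12}, which make $\mathrm{deg}(u|_{\Gamma_i})$ well defined independently of the choice of uniformization, and it is what guarantees (via Theorem~\ref{Extension}) that $f_i$ extends homeomorphically to $\overline{\D}$ so that the pushed-in curves indeed converge to $\Gamma_i$ and remain disjoint Jordan curves bounding $\Omega^\delta$. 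With that attribution corrected, the argument stands.
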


\section{Proof of Theorem \ref{theoGL}}

\subsection{Upper and lower log estimates for the energy} 
\label{subenergyestimates}
Let $u_{\epsilon} \in \mathcal{A}$ be such that $E_{\eps}(u_{\eps}) = \inf_{u \in \mathcal{A}} E_{\eps}\left(u \right)$. Such a minimizer exists because the functional $E_{\eps}$ is coercive. Since $u_{\eps}$ comes from a minimization problem, by a test function argument, we have the following upper bound on the energy, see Theorem III.1 of \cite{BBH}. There exists $D_0>0$ such that, for any $\eps>0$,
\begin{equation} \label{upperboundBBH} E_{\eps}(u_{\eps}) \leq \pi \left\vert \ln \eps \right\vert + D_0 . \end{equation}

Then we want to prove the following key estimate,

\begin{equation}\label{eqboundedassumption} \frac{1}{4\eps^2}\int_{\Omega}\left(1-\vert u_{\eps}\vert^2\right)^2dx \leq C \end{equation} 
for some constant $C$ independent from $\eps$, depending only on $\Omega$ and $g$.

In their seminal work Bethuel, Brezis and Hélein proved this estimate using a Poho\v{z}aev identity, but in our case since the boundary data is only $H^\frac{1}{2}$ we can't apply this idea.

Our argument, inspired from \cite{dPF}, is based on an extension of the sequence of minimizers $u_{\eps}:\Omega \to \mathbb{R}^2$ to some bigger smooth domain $\Omega' \supset \Omega$ whose boundary data is smooth on $\partial \Omega'$.

\begin{lem}[extension lemma]
\label{extl}
Let $\Omega$ a domain with a quasicircle as boundary and $g\in H^{\frac{1}{2}}\left(\partial \Omega,\mathbb{S}^1\right)$ there is a smooth simply connected domain $\Omega'$ such that $\overline{\Omega} \subset \Omega'$ and  a function $\hat{u} : \Omega'\setminus \Omega \to \mathbb{R}^2$ such that
\begin{itemize}
\item $\hat{u}=g$ on $\partial \Omega$,
\item $\hat{u} \in \mathcal{C}^{\infty}\left( \overline{\Omega'} \setminus \overline{\Omega} \right)$,
\item $\hat{u} \in H^1\left( \Omega' \setminus \Omega \right)$ and whose energy depends only on $\Omega$ and $\Vert g\Vert_{H^\frac{1}{2}}$,
\item $ \left\vert \hat{u} \right\vert = 1 $ on  $\overline{\Omega'} \setminus \Omega$,
\item $\mathrm{deg}(\hat{u}_{\vert \partial \Omega'})=\mathrm{deg}(g)$.
\end{itemize}
\end{lem}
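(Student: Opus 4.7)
My plan is to build $\hat u$ as the pushforward, via a quasiconformal extension of the uniformization, of an $\mathbb{S}^1$-valued model map on an annulus in $\D$. Let $f:\D\to \Omega$ be a uniformization and let $\tilde f:\C\to\C$ be a $K$-quasiconformal extension from Theorem \ref{Extension}, chosen so that $\tilde f$ is smooth on $\C\setminus\partial\D$; such an extension can be obtained by a Beurling--Ahlfors or Douady--Earle construction in $\C\setminus\overline\D$, which is real-analytic in the interior. Fix $R>1$ and set $\Omega':=\tilde f(\D_R)$. Since $\tilde f$ is a global homeomorphism and a smooth diffeomorphism on a neighbourhood of $\partial\D_R\subset\C\setminus\overline\D$, $\Omega'$ is a smooth simply connected domain containing $\overline\Omega$, and $\Omega'\setminus\overline\Omega=\tilde f(A_{1,R})$.

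For the model, set $\tilde g:=g\circ f$, which lies in $H^{\frac{1}{2}}(\mathbb{S}^1,\mathbb{S}^1)$ by Definition \ref{H12Q} with $\|\tilde g\|_{H^{\frac{1}{2}}}=\|g\|_{H^{\frac{1}{2}}(\Gamma)}$ and $\mathrm{deg}(\tilde g)=\mathrm{deg}(g)=:n$. I would apply the Brezis--Nirenberg $H^{\frac{1}{2}}$-lifting \cite{BN} to write $\tilde g(e^{i\theta})=e^{in\theta}e^{i\phi(\theta)}$ with $\phi\in H^{\frac{1}{2}}(\mathbb{S}^1,\R)$ of norm controlled by $\|\tilde g\|_{H^{\frac{1}{2}}}+|n|$. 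Let $\Phi\in H^1(A_{1,R})$ be the harmonic extension with $\Phi|_{\mathbb{S}^1}=\phi$ and $\Phi|_{\partial\D_R}=0$; by elliptic regularity at the smooth part of the data, $\Phi\in C^\infty$ up to $\partial\D_R$. Then define
$$\tilde h(re^{i\theta}):=e^{in\theta}e^{i\Phi(r,\theta)}\quad\text{on }A_{1,R},$$
an $\mathbb{S}^1$-valued $H^1$ map with $\|\tilde h\|_{H^1}^2\leq C(1+|n|^2+\|\phi\|_{H^{\frac{1}{2}}}^2)$, smooth up to $\partial\D_R$ where it equals $e^{in\theta}$, and equal to $\tilde g$ on $\mathbb{S}^1$. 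Finally set
$$\hat u(x):=\tilde h\bigl(\tilde f^{-1}(x)\bigr)\quad\text{for }x\in\overline{\Omega'}\setminus\overline\Omega.$$

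The five required properties then follow directly. The equality $\hat u=g$ on $\partial\Omega$ holds since $\tilde f=f$ on $\mathbb{S}^1$ and $\tilde h|_{\mathbb{S}^1}=\tilde g=g\circ f$. Both $\tilde h$ and $\tilde f^{-1}$ are smooth up to the outer boundary, so $\hat u\in C^\infty(\overline{\Omega'}\setminus\overline\Omega)$. The identity $|\hat u|=1$ is inherited from $\tilde h$. For the $H^1$ bound, the two-dimensional quasi-invariance of the Dirichlet energy under $K$-quasiconformal maps gives $\int_{\Omega'\setminus\Omega}|\nabla\hat u|^2\leq K\int_{A_{1,R}}|\nabla\tilde h|^2$, and combined with the estimate on $\|\tilde h\|_{H^1}$ this yields $\|\hat u\|_{H^1(\Omega'\setminus\Omega)}\leq C(\Omega)(1+\|g\|_{H^{\frac{1}{2}}(\Gamma)})$. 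For the degree, on $\partial\Omega'=\tilde f(\partial\D_R)$ we have $\hat u=e^{in\theta}\circ\tilde f^{-1}$, and since $\tilde f^{-1}|_{\partial\Omega'}:\partial\Omega'\to\partial\D_R$ is an orientation-preserving homeomorphism, $\mathrm{deg}(\hat u|_{\partial\Omega'})=n$.

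The two main technical points are, in my view: (i) ensuring $\partial\Omega'$ is smooth, which forces the requirement that $\tilde f$ be smooth on $\C\setminus\partial\D$ (not merely $K$-QC), and is why I insist on a Beurling--Ahlfors or Douady--Earle type extension; and (ii) propagating the $\|g\|_{H^{\frac{1}{2}}}$ control linearly through the Brezis--Nirenberg lifting and the QC pullback. Both are classical once the right extension $\tilde f$ is chosen.
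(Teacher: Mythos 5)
Your proof is correct in its essential structure, but it takes a genuinely different path from the paper's, and each route has its own trade-offs.

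The paper avoids your two ``technical points'' entirely by working with the \emph{exterior} uniformization $h:\D^c\to\Omega^c$ rather than a quasiconformal extension of the interior uniformization. On $\D^c$ the map $h$ is conformal, hence automatically real-analytic there, so $\partial\Omega'=h(S_{1+\delta})$ is smooth with no appeal to a special (Douady--Earle/Beurling--Ahlfors) extension; and the Dirichlet energy is \emph{exactly} conformally invariant under $h$, rather than only quasi-invariant up to a factor $K$ as in your pullback. The paper also avoids the Brezis--Nirenberg lifting: it takes the scalar harmonic extension $\tilde u\in H^1(\D^c)\cap C^\infty(\D^c,\D)$ of $\tilde g=g\circ h$ and uses Lemma~\ref{Klem} (a BMO/VMO properness estimate) to see that $\tfrac34\le|\tilde u|\le\tfrac54$ on a thin collar $\D_{1+\delta}\setminus\D$, so that $\hat u=v/|v|$ with $v=\tilde u\circ h^{-1}$ is $\mathbb{S}^1$-valued, smooth on $\overline{\Omega'}\setminus\overline\Omega$, and has controlled energy on $\Omega'=h(\D_{1+\delta})$. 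Your phase-lifting construction is a perfectly legitimate alternative to Lemma~\ref{Klem} and has the aesthetic advantage of being exactly $\mathbb{S}^1$-valued before any normalization; its cost is precisely the need for a particular smooth QC extension in the exterior, which the paper sidesteps.

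Two small cautions on your version. First, be explicit when invoking a smooth QC extension $\tilde f$ coinciding with $f$ on $\D$: the cleanest way to get one is to write $\tilde f|_{\D^c}=h\circ E$ where $h$ is the exterior conformal map and $E:\D^c\to\D^c$ is the Douady--Earle extension of the welding $h^{-1}\circ f|_{\mathbb{S}^1}$; otherwise ``QC extension that happens to be smooth off $\partial\D$'' is left a bit informal. Second, the claimed \emph{linear} bound $\|\hat u\|_{H^1}\le C(\Omega)(1+\|g\|_{H^{1/2}(\Gamma)})$ overstates what the $H^{1/2}$-lifting gives: the lift $\phi$ is controlled by a nonlinear (polynomial) function of $\|\tilde g\|_{H^{1/2}}$, not linearly. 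This does not affect the lemma as stated, since only ``depends on $\Omega$ and $\|g\|_{H^{1/2}}$'' is required, but the displayed inequality should not claim linearity.
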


\begin{proof} 
Let $h :\D^c\rightarrow \Omega^c$ be a uniformization map, since $\Omega$ is a quasidisk, thanks to Theorem \ref{Extension}, $h$ admits a quasiconformal extension from $\C$ to itself, we still denote $h$.
We set $\tilde{g}:\mathbb{S}^1 \rightarrow \mathbb{S}^1$ by $\tilde{g}=g\circ h$, by conformal invariance, see Theorem \ref{H12}, $\tilde{g}$ is in $H^\frac{1}{2}(\mathbb{S}^1,\mathbb{S}^1)$. Let $\tilde{u} \in H^1(\D^c) \cap C^{\infty}(\D^c,\D)$ be the harmonic extension of $\tilde{g}$, finally we set $v=\tilde{u} \circ h^{-1}$.  Since $h$ and $\tilde{u}$ are smooth outside $\overline{\D}^c$ then $v$ is smooth on $\overline{\Omega}^c$. Since $h$ is conformal on $\overline{\D}^c$, the energy of $v$ is bounded and depends only on $\Vert \tilde{g}\Vert_{H^\frac{1}{2}}$, i.e on $\Omega$ and $g$ by Theorem \ref{H12} . Finally, thanks to Lemma \ref{Klem}, there exists $\delta>0$ such that $\frac{3}{4}\leq \vert \tilde{u} \vert \leq \frac{5}{4}$ on $\D_{1+\delta}\setminus \D$. Then $\hat{u}=\frac{v}{\vert v\vert }$ satisfies the desired properties on $\Omega'=g(\D_{1+\delta})$ since it has the same property than $v$ and  the last property is given by Theorem \ref{somdeg}.    
\end{proof}

Then, thanks to Lemma \ref{extl}, the function $\hat{u}_{\eps}:\Omega' \to \mathbb{R}^2$ defined as an extension of $u_{\eps}$ such that
$$\hat{u}_{\eps} = u_{\eps} \text{ in }\Omega \text{ and } \hat{u}_{\eps} = \hat{u} \text{ in } \Omega'\setminus\Omega$$ 
can be used as a test function for the classical Ginzburg-Landau problem on the smooth domain $\Omega'$, minimizing the energy
$$ E^{\Omega'}_{\eps}(v) = \frac{1}{2}\int_{\Omega'}\left\vert \nabla v \right\vert^2 + \frac{1}{4\eps^2} \int_{\Omega'}\left(1-\left\vert v \right\vert^2\right)^2$$
on the set of admissible functions 
$\mathcal{A}^{\Omega'} = \{v \in W^{1,2}\left(\Omega'\right) ; v = \hat{u} \text{ on } \partial \Omega'\}$. We recall that $\hat{u}$ is a smooth function on $\partial \Omega'$.
Since
\begin{equation*}  E_{2\eps}(u_{\eps}) = E^{\Omega'}_{2\eps}(\hat{u}_{\eps}) - \frac{1}{2}\int_{\Omega'\setminus\Omega} \left\vert \nabla \hat{u} \right\vert^2 \geq \inf_{v\in \mathcal{A}^{\Omega'}} E^{\Omega'}_{2\eps}(v) - \frac{1}{2}\int_{\Omega' \setminus\Omega} \left\vert \nabla \hat{u} \right\vert^2  
\end{equation*}
and by the classical lower bound on the energy, see estimate (1.9) in \cite{dPF}, we finaly get
\begin{equation} \label{eqlowerlogest} E_{2\eps}(u_{\eps}) \geq \pi\left\vert \ln\eps \right\vert - D_1 \end{equation}
for some constant $D_1$ depending only on $\Omega$ and $\tau$.

Now, using \eqref{upperboundBBH} and \eqref{eqlowerlogest} we obtain
$$ \frac{1}{4\eps^2} \int_{\Omega} \left(1-\left\vert u_{\eps} \right\vert^2\right)^2 = \frac{4}{3}\left( E_{\eps}(u_{\eps}) - E_{2\eps}(u_{\eps}) \right) \leq \frac{4}{3}\left( D_0 +D_1   \right).$$
Setting $C = \frac{4}{3}\left( D_0 +D_1 \right)$ ends the proof of \eqref{eqboundedassumption}.

\subsection{A priori estimates for sequences of minimizers} 
In this section we prove two fundamental estimate,  the first one, proposition \ref{propfarboundary}, asserts that $u_\eps$ can't be too small near the boundary, and the second one, proposition \label{prquantum}, is known as $\eta$-compactness. Their proof does not require that $u_\eps$ is a minimizer but they only need as assumption the uniform boundness of $u_\eps$. This boundness is an easy consequence of the maximum principle in the smooth case, but in our our weak setting we are not able to prove it without assuming that the $u_\eps$ is a minimizer. Which leads us to ask the following question: 
\smallskip

\textbf{Open question :} are there sequences of critical point of $E_\eps$ with fixed boundary data in $H^\frac{1}{2}(\partial \Omega, \mathbb{S}^1)$ which are not uniformly bounded ? 

\medskip

Any critical point $u_{\eps}$ of $E_\eps$ satisfies the Euler-Lagrange equation
\begin{equation} 
\label{eqeulerlagrange} 
\begin{cases} -\Delta u_{\eps} = \frac{1}{\eps^2}\left(1-\left\vert u_{\eps} \right\vert^2\right) u_{\eps} & \text{ in }\Omega \\
u_{\eps} =_{a.e} g & \text{ on } \partial{\Omega} \end{cases}\end{equation}
and we have those very first estimates on $u_{\eps}$, which are true as soon as $g \in H^\frac{1}{2}\left(\Gamma,\mathbb{S}^1\right)$. In fatc in \cite{FM} they assume that boundary is Lipschitz but the argument works as soon as we have a minimizer.

\begin{prop}[Lemma 3.2 of \cite{FM}]
\label{norm1}

If $u_\eps$ is a minimizer of $E_{\eps}$, then

\begin{equation} \label{eqmaxprinciple} \left\Vert u_{\eps} \right\Vert_{L^\infty\left(\Omega\right)} \leq 1 .
\end{equation}
\end{prop}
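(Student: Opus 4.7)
The plan is a standard truncation-by-projection argument that works under the very weak hypotheses of the theorem because the boundary values $g$ take values in $\mathbb{S}^1$. Concretely, I would introduce the nearest-point projection onto the closed unit disk,
\[
\pi : \mathbb{R}^2 \to \overline{\mathbb{D}}, \qquad \pi(x) = \begin{cases} x & \text{if } |x| \le 1, \\ x/|x| & \text{if } |x| > 1, \end{cases}
\]
and set $v_\eps := \pi \circ u_\eps$. Since $\pi$ is globally $1$-Lipschitz with $\pi(\mathbb{S}^1) = \mathbb{S}^1$, the function $v_\eps$ lies in $H^1(\Omega,\mathbb{R}^2)$ and still satisfies $v_\eps = g$ a.e. on $\Gamma$ (because $|g|=1$ a.e.); hence $v_\eps \in \mathcal{A}$.

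Next I would compare the two pieces of the energy. The chain rule for Lipschitz compositions gives $|\nabla v_\eps| \le |\nabla u_\eps|$ pointwise a.e., with equality on $\{|u_\eps| \le 1\}$ and no constraint (but still the inequality) on $\{|u_\eps| > 1\}$. For the potential, note that $1 - |v_\eps|^2 = 1 - |u_\eps|^2$ on $\{|u_\eps| \le 1\}$ while $1 - |v_\eps|^2 = 0$ on $\{|u_\eps| > 1\}$, whereas on the latter set $(1-|u_\eps|^2)^2 = (|u_\eps|^2-1)^2 > 0$. Summing,
\[
E_\eps(v_\eps) \le E_\eps(u_\eps) - \frac{1}{4\eps^2} \int_{\{|u_\eps| > 1\}} \bigl(|u_\eps|^2 - 1\bigr)^2 \, dx.
\]
By the minimality of $u_\eps$ we must have $E_\eps(u_\eps) \le E_\eps(v_\eps)$, so the last integral vanishes, which forces the Lebesgue measure of $\{|u_\eps| > 1\}$ to be zero. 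Hence $|u_\eps| \le 1$ almost everywhere on $\Omega$.

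Finally, since $u_\eps$ is a weak solution of \eqref{eqeulerlagrange} with a right-hand side now in $L^\infty(\Omega)$, elliptic regularity ensures $u_\eps$ is continuous in the interior, so the pointwise bound $|u_\eps| \le 1$ holds on $\Omega$, yielding $\|u_\eps\|_{L^\infty(\Omega)} \le 1$.

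The whole argument is elementary and, crucially, uses only (i) that $|g|=1$ a.e.\ on $\Gamma$ so that the projection preserves the boundary condition, and (ii) that $u_\eps$ is a minimizer, not merely a critical point; no maximum principle, rescaling, or regularity of $\Gamma$ is invoked. The only mild technical point is justifying the chain-rule inequality $|\nabla (\pi \circ u_\eps)| \le |\nabla u_\eps|$ for a Lipschitz $\pi$ acting on an $H^1$ map, which follows from Stampacchia's theorem applied component-wise. This is precisely why the proposition requires the minimizer hypothesis and leaves the question raised just before it open for general critical points.
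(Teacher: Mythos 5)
Your argument is correct and is essentially the same truncation-by-projection comparison that the paper invokes by citing Lemma 3.2 of Farina--Mironescu; the paper does not reprove it, precisely because (as you observe) that argument already works with no regularity assumption on $\Gamma$ as long as $u_\eps$ is a minimizer rather than a mere critical point. The only point worth stating a bit more carefully is that the final pointwise bound $\|u_\eps\|_{L^\infty(\Omega)}\le 1$ is already the a.e.\ conclusion obtained from the comparison; the appeal to interior elliptic regularity upgrades it to continuity but is not needed for the $L^\infty$ norm itself.
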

Notice that  \eqref{eqmaxprinciple} is true for any critical solution $u_{\eps}$ to $E_{\eps}$ if $\Omega$ is smooth enough to apply the maximum principle argument, see proposition 2 of \cite{BBH2}. The next proposition is a classical elliptic estimate true independently of the regularity of $\Omega$.

\begin{prop}[Lemma A.1 \cite{BBH2}] \label{propBBHgradientbound}
There is a constant $C_1>0$ such that for any $\eps>0$, and for any $x\in \Omega$,
\begin{equation} \label{eqgradientestimate} \left\vert \nabla u_{\eps} \right\vert(x) \leq C_1 \Vert u_\eps \Vert_\infty \left( \frac{1}{\eps} + \frac{1}{dist(x,\partial\Omega)}  \right). \end{equation}
In particular, if \eqref{eqmaxprinciple} holds, $\Vert u_\eps\Vert_\infty$ can be removed.
\end{prop}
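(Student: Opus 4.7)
The plan is to prove this via a rescaling argument applied to the Euler--Lagrange equation \eqref{eqeulerlagrange}, following the standard blow-up-at-a-point strategy used in Lemma A.1 of \cite{BBH2}. The point is that the gradient estimate is purely interior in nature: at a point $x$ lying at distance $d=\mathrm{dist}(x,\partial\Omega)$ from the boundary, all the relevant scales are determined by $\eps$ and $d$, and no regularity of $\partial\Omega$ enters.

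Fix $x\in\Omega$ and set $\rho = \tfrac{1}{2}\min(\eps, d)$, so that $\overline{\mathbb{D}_{\rho}(x)}\subset \Omega$. Define the rescaled function $v(y) = u_{\eps}(x+\rho y)$ for $y\in \mathbb{D}$. A direct computation using \eqref{eqeulerlagrange} gives
$$ -\Delta v(y) = \frac{\rho^2}{\eps^2}\bigl(1-|v(y)|^2\bigr)\,v(y) \qquad \text{in } \mathbb{D}. $$
By the choice $\rho\leq \eps/2$, the prefactor $\rho^2/\eps^2$ is at most $1/4$, so the right-hand side is pointwise bounded in absolute value by $C\|u_{\eps}\|_\infty(1+\|u_{\eps}\|_\infty^2)$, which in turn is controlled by $C\|u_\eps\|_\infty$ because $\|u_\eps\|_\infty$ is uniformly bounded (indeed $\leq 1$ by Proposition \ref{norm1} in our application, but more generally one keeps the explicit factor).

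Next I would invoke standard interior elliptic regularity on the unit disk: for $v\in H^1(\mathbb{D})$ solving $-\Delta v = F$ with $F\in L^{\infty}(\mathbb{D})$, one has the pointwise gradient bound
$$ |\nabla v(0)| \leq C\bigl(\|v\|_{L^{\infty}(\mathbb{D})} + \|F\|_{L^{\infty}(\mathbb{D})}\bigr), $$
where $C$ is a universal constant (this follows e.g.\ from Calder\'on--Zygmund $W^{2,p}$ estimates combined with the Sobolev embedding $W^{2,p}\hookrightarrow C^1$ for $p>2$, or equivalently from the mean-value representation). Applying this to our rescaled $v$ yields $|\nabla v(0)|\leq C\|u_\eps\|_\infty$. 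Undoing the rescaling gives $|\nabla u_\eps(x)|=\rho^{-1}|\nabla v(0)|\leq C\|u_\eps\|_\infty/\rho$.

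Finally, since $1/\rho = 2\max(1/\eps,1/d) \leq 2(1/\eps + 1/\mathrm{dist}(x,\partial\Omega))$, we obtain the claimed estimate with $C_1 = 2C$. The main (very modest) subtlety is simply the choice of the scale $\rho$: it must be small enough on the one hand to make $\rho^2/\eps^2$ bounded, so that the nonlinear term on the right-hand side of the rescaled equation has controlled $L^\infty$ norm, and on the other hand large enough so that after undoing the rescaling the resulting factor involves only the sum $1/\eps+1/d$ rather than some worse combination; taking $\rho \sim \min(\eps,d)$ achieves both. No boundary regularity of $\Omega$ is ever invoked, which is exactly why the proposition holds in our low-regularity setting.
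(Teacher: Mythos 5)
Your proof is correct and takes the same route the paper implicitly relies on: the proposition is stated as ``Lemma A.1 of \cite{BBH2}'' with no proof given, and that BBH lemma is proven exactly by the interior rescaling argument you present. The one point worth flagging is the dependence on $\|u_\eps\|_\infty$: since the nonlinearity $(1-|u|^2)u$ is cubic, the $L^\infty$ bound on the right-hand side of the rescaled equation is $\tfrac{\rho^2}{\eps^2}(1+\|u_\eps\|_\infty^2)\|u_\eps\|_\infty$, so the clean linear factor $\|u_\eps\|_\infty$ claimed in \eqref{eqgradientestimate} is only obtained with a universal $C_1$ after absorbing the extra $(1+\|u_\eps\|_\infty^2)^{1/2}$ factor, which requires a uniform bound on $\|u_\eps\|_\infty$ (available here via Proposition \ref{norm1}). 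You do acknowledge this, and the paper's own statement is implicitly making the same abuse, so there is no genuine gap.
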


In the following proposition, we prove that if $u_\eps$ is uniformly bounded then  $\left\vert u_{\eps}(x)\right\vert$ has to be far from $0$ as soon as $x$ is close to $\partial \Omega$ at the scale $\eps$. This hypothesis of uniform  boundness of $u_\eps$ is satisfies in the case of minimizers thanks to proposition \ref{norm1}.

\begin{prop} \label{propfarboundary} Let $u_\eps$ be solutions of \eqref{eqeulerlagrange} which are uniformly bounded with respect to $\eps$, then  there are $\eps_0,\eta_0>0$ depending only on $\sup_{\eps} \Vert u_\eps \Vert_\infty $, $\Omega$ and $g$ such that for any $0<\eps\leq \eps_0$, for any $x\in \Omega$ 

\begin{equation}\label{eqnormuclosetoboundary} 
 dist(x,\partial\Omega)\leq \eta_0 \eps  
 \Rightarrow \hspace{3mm} \left\vert u_{\eps}(x)\right\vert \geq \frac{1}{2}.  \end{equation}
\end{prop}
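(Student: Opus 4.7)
The plan is to argue by contradiction via a blow-up/compactness argument. Suppose there exist $\eps_n \to 0$ and $x_n \in \Omega$ with $\delta_n := d(x_n,\partial\Omega)/\eps_n \to 0$ and $|u_{\eps_n}(x_n)| < 1/2$. Pick $y_n \in \partial\Omega$ nearest to $x_n$ and set $p_n := (x_n - y_n)/\eps_n$, so $|p_n| = \delta_n \to 0$; up to subsequence $y_n \to y_\infty \in \partial\Omega$. I would extend $u_{\eps_n}$ to $\hat u_{\eps_n}$ on a fixed smooth $\Omega' \supset \overline\Omega$ using Lemma \ref{extl} (which crucially provides a \emph{smooth}, $\mathbb{S}^1$-valued $\hat u$ outside $\Omega$), and then rescale: $V_n(z) := \hat u_{\eps_n}(y_n + \eps_n z)$, defined on the expanding domain $(\Omega' - y_n)/\eps_n$. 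Inside $\Omega_n := \eps_n^{-1}(\Omega - y_n)$, $V_n$ solves the fixed-scale equation $-\Delta V_n = (1 - |V_n|^2)V_n$ with $|V_n|\le M$; outside $\Omega_n$, $V_n = \hat u(y_n + \eps_n\cdot)$ is smooth, $\mathbb{S}^1$-valued, and converges locally uniformly to the constant $c := \hat u(y_\infty) \in \mathbb{S}^1$.

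The quasidisk hypothesis enters through the Ahlfors three-point condition (Proposition \ref{qcp}): the rescaled boundaries $\partial\Omega_n$ are uniform quasicircles passing through $0$, so up to further subsequence they converge in the local Hausdorff sense to a limit bounding an unbounded domain $\Omega_\infty$ with $0 \in \partial\Omega_\infty$, and with both $\Omega_\infty$ and $\C \setminus \overline{\Omega_\infty}$ of uniformly positive density at $0$. Combining the uniform $L^\infty$ bound with local Ginzburg--Landau energy estimates (derived in \S\ref{subenergyestimates} from the Jerrard--Sandier framework), the family $V_n$ is bounded in $H^1_{\mathrm{loc}}(\C)$, so up to subsequence $V_n \rightharpoonup V_\infty$ weakly in $H^1$ and strongly in $L^2_{\mathrm{loc}}$; the convergence upgrades to $C^{1,\alpha}_{\mathrm{loc}}$ on $\Omega_\infty$ by interior elliptic regularity (Proposition \ref{propBBHgradientbound} applied at the fixed scale). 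Then $V_\infty$ is a bounded classical solution of the limit GL equation on $\Omega_\infty$ whose $H^1$-trace on $\partial\Omega_\infty$ equals $c \in \mathbb{S}^1$ (inherited from the exterior side where the convergence is uniform). A Liouville-type argument --- test the equation against $V_\infty - c$ on $B_R \cap \Omega_\infty$, exploit $|c|=1$, $|V_\infty|\le 1$ to make the potential term sign-definite, and use the constant boundary trace when integrating by parts --- forces $V_\infty \equiv c$ on $\Omega_\infty$.

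To deduce the contradiction with $|V_n(p_n)| < 1/2$ one performs a second, self-similar rescaling at scale $|p_n|$ around $p_n$: let $W_n(\zeta) := V_n(p_n + |p_n|\zeta)$. By the uniform corkscrew property at $0 \in \partial\Omega_\infty$ (again from quasidisk), the nearest boundary point to $p_n$ in $\Omega_n$ stays at distance comparable to $|p_n|$, so the rescaled domain contains a fixed ball around the origin. The function $W_n$ satisfies $-\Delta W_n = |p_n|^2 (1 - |W_n|^2) W_n$, an elliptic equation with vanishing zero-order term; its mean over any fixed ball equals the mean of $V_n$ on a shrinking ball around $p_n$, which tends to $c$ by strong $L^2_{\mathrm{loc}}$ convergence $V_n \to c$. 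Interior elliptic estimates then upgrade this to $W_n \to c$ uniformly on a neighborhood of $0$, giving $V_n(p_n) = W_n(0) \to c$ with $|c|=1$, contradicting $|V_n(p_n)| < 1/2$. The hardest part of this plan will be the penultimate step: the rescaled boundaries are wild quasicircles rather than half-lines, so $\partial\Omega_\infty$ need not be flat or Lipschitz, and both the Liouville argument and the second rescaling rely on the quantitative density/corkscrew estimates furnished by the Ahlfors three-point condition --- this is precisely where the quasidisk hypothesis is indispensable and where the classical BBH boundary analysis breaks down.
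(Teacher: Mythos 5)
Your blow-up plan has a fatal gap at its very first anchor point, and two more downstream.

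\emph{The rescaled exterior data does not converge uniformly to a constant.} Lemma~\ref{extl} provides $\hat u\in\mathcal{C}^\infty\bigl(\overline{\Omega'}\setminus\overline{\Omega}\bigr)$: smooth up to the \emph{outer} boundary $\partial\Omega'$, but \emph{not} up to $\partial\Omega$. On $\Gamma=\partial\Omega$ the function $\hat u$ agrees a.e. with $g$, and $g$ is merely $H^{1/2}(\Gamma,\mathbb{S}^1)$ --- a VMO map that may be discontinuous everywhere. In the proof of the extension lemma, $\hat u=v/|v|$ with $v=\tilde u\circ h^{-1}$ and $\tilde u$ the harmonic extension of $g\circ h$; as one approaches $\mathbb{S}^1$ the gradient of $\tilde u$ can blow up, so the derivatives of $\hat u$ are not uniformly bounded near $\Gamma$. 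Consequently $\hat u(y_n+\eps_n z)$ need not converge locally uniformly (nor even in $L^\infty$ of any fixed annulus outside $\Omega_n$) to $c=\hat u(y_\infty)$, and indeed $\hat u(y_\infty)$ is not even well defined. The whole point of the proposition is to handle exactly this kind of discontinuous $H^{1/2}$ data; the constancy at the blow-up scale, which your Liouville-type step and your second rescaling both rely on, is precisely what fails.

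\emph{The $H^1_{\mathrm{loc}}$ compactness of $V_n$ is not available.} Section~\ref{subenergyestimates} gives only the global bound $E_{\eps}(u_\eps)\le \pi|\ln\eps|+D_0$, and for the rescaling $V_n(z)=\hat u_{\eps_n}(y_n+\eps_n z)$ this translates into $\int_{B_R}|\nabla V_n|^2=\int_{B_{R\eps_n}(y_n)}|\nabla u_{\eps_n}|^2\le 2\pi|\ln\eps_n|+C$, which is unbounded. Obtaining a local energy bound near a boundary point at scale $\eps$ would itself rely on knowing that no vortex approaches the boundary, i.e. precisely the statement being proved, so the argument is circular. Finally, even granting $V_\infty\equiv c$, the second self-similar rescaling needs $\fint_{B_{|p_n|}(p_n)}|V_n-c|^2\to 0$ while you only control $\int_{B_R}|V_n-c|^2\to 0$; $L^2_{\mathrm{loc}}$ convergence does not control averages over balls shrinking at the same rate as $|p_n|$.

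The paper avoids all of this by working on $\mathbb{D}$ after pullback by a uniformization (extended quasiconformally to $\mathbb{C}$) and splitting $v_\eps=a_\eps+b_\eps+h$: $a_\eps$ is controlled in $L^\infty$ via a Wente-type lemma (Lemma~\ref{LLP}) combined with the Koebe distortion estimate~\eqref{KoebeSchwartz}; $b_\eps$ is a harmonic function vanishing on $\mathbb{S}^1$, hence small near the boundary at the correctly redistorted scale (this is where quasisymmetry yields the inclusion~\eqref{distorsionlemma2}); and $h$, the harmonic extension of the $\mathbb{S}^1$-valued boundary data, is uniformly proper by the VMO/John--Nirenberg estimate of Lemma~\ref{Klem}. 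That last lemma is the correct substitute for ``$g$ is close to a constant at small scales'': it gives closeness only in mean, not uniformly, and the decomposition is structured so that this suffices. Your strategy, which implicitly upgrades VMO to continuity, cannot be repaired without rediscovering essentially that mechanism.
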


\begin{proof}  First of all, let us remind that thanks to our assumption and \eqref{eqboundedassumption}, there exists $C>0$ such that 
$$ \frac{1}{4\eps^2}\int_{\Omega}\left(1-\vert u_{\eps}\vert^2\right)^2dx \leq C  \text{ and } \Vert u_\eps\Vert_\infty \leq C. $$ 

We aim at finding $\eta_0$ and $\eps_0$ such that for any $0<\eps \leq \eps_0$ such that for any $y_\eps\in \partial\Omega$,
\begin{equation} \label{eqgoalueps} \inf_{x\in D(y_\eps,\eta_0\eps)\cap \Omega}\left\vert u_{\eps}(x)\right\vert \geq \frac{1}{2}. \end{equation}
Let $f:\mathbb{D}\to \Omega$ be a uniformization of $\Omega$. Since $\Gamma = \partial\Omega$ is a quasicircle, there is a $K$-quasiconformal extension $f:\mathbb{C}\to \mathbb{C}$ of $f$, still denoted $f$. We let $v_{\eps} = u_{\eps}\circ f : \mathbb{D}\to \mathbb{R}^2$ be the pullback of $u_{\eps}$ by $f$. We have by conformal invariance, and because $u_{\eps}\in H^{\frac{1}{2}}\left(\Gamma \right)$ that
$$ \begin{cases} -\Delta v_{\eps} = \left\vert f' \right\vert^2 \frac{1-\left\vert v_{\eps} \right\vert^2}{\eps^2} v_{\eps} & \text{ in } \mathbb{D} \\
v_{\eps}=g \circ f \in  H^{\frac{1}{2}}\left(\mathbb{S}^1\right) & \text{ on } \mathbb{S}^1 \end{cases}$$

Let $\lambda_0>0$ we shall fix later, we set $D_\eps =f^{-1}\left(\mathbb{D}_{\lambda_0\eps}(y_{\eps})\cap \Omega \right)$. We split $v_{\eps} = a_{\eps} + b_{\eps}+h$ so that
\begin{align*}
& \begin{cases} -\Delta a_{\eps} = \mathbf{1}_{D_\eps} \left\vert f' \right\vert^2 \frac{1-\left\vert v_{\eps} \right\vert^2}{\eps^2} v_{\eps} & \text{ in } \mathbb{D} \\
a_{\eps} = 0 & \text{ on } \mathbb{S}^1 \end{cases}\\
&\begin{cases} -\Delta b_{\eps} = \left(1-\mathbf{1}_{D_\eps}\right) \left\vert f' \right\vert^2 \frac{1-\left\vert v_{\eps} \right\vert^2}{\eps^2} v_{\eps} & \text{ in } \mathbb{D} \\
b_{\eps} = 0 & \text{ on } \mathbb{S}^1 \end{cases} \\
&\begin{cases} -\Delta h = 0 & \text{ in } \mathbb{D} \\
h = v_{\eps} = g \circ f & \text{ on } \mathbb{S}^1 \end{cases}
\end{align*}

\bigskip

\noindent \textbf{ Claim 1: For $\lambda_0$ small enough $\Vert a_\eps \Vert_\infty \leq \frac{1}{8}$.}
\smallskip

By Lemma \ref{LLP}, we have
\begin{equation} \label{eqLinfty+gradientesta} \left\Vert a_{\eps} \right\Vert_{L^\infty} + \left\Vert \nabla a_{\eps} \right\Vert_{L^2} \leq C_0 \left( \left\Vert \left(1-\left\vert z \right\vert^2\right)^2 \Delta a_{\eps} \right\Vert_{L^{\infty}} + \left\Vert \Delta a_{\eps} \right\Vert_{L^1} \right). \end{equation}
We aim at estimating the right-hand terms. By Koebe distortion theorem, see Corollary 1.4 \cite{Pom}, we have that 
\begin{equation} \label{KoebeSchwartz}  \frac{dist(f(z),\partial \Omega)}{1-\left\vert z \right\vert^2}  \leq \left\vert f'(z) \right\vert \leq 4 \frac{dist(f(z),\partial \Omega)}{1-\left\vert z\right\vert^2}. \end{equation}
In particular, if $z \in D_\eps $, 
\begin{equation} \label{eqdistboundaryLinftyest}  \left\vert f'(z) \right\vert^2 \frac{1-\left\vert v_{\eps}(z) \right\vert^2}{\eps^2} \left\vert v_{\eps}(z) \right\vert \leq 4 C \frac{ \lambda_0^2}{(1-\left\vert z \right\vert^2)^2}. \end{equation}
We also have that, by change of variables,
\begin{equation} \label{eqL1est} \begin{split} 
\int_{D_\eps} \left\vert f' \right\vert^2 \frac{1-\left\vert v_{\eps} \right\vert^2}{\eps^2} \left\vert v_{\eps} \right\vert \, dz  & = \int_{f(D_\eps)} \frac{1-\left\vert u_{\eps} \right\vert^2}{\eps^2} \left\vert u_{\eps} \right\vert \, dz \\
& \leq \left(\int_{\Omega}\frac{\left(1-\left\vert u_{\eps} \right\vert^2\right)^2}{\eps^2} \, dz\right)^{\frac{1}{2}} \left( C\frac{\left\vert f(D_\eps) \right\vert}{\eps^2}  \right)^{\frac{1}{2}} \\
& \leq 2C \sqrt{2\pi} \lambda_0 .
\end{split} 
\end{equation}
Combining \eqref{eqLinfty+gradientesta}, \eqref{eqdistboundaryLinftyest} and \eqref{eqL1est}, we obtain
\begin{equation} \label{eqLinfty+gradientesta2} \left\Vert a_{\eps} \right\Vert_{L^\infty} + \left\Vert \nabla a_{\eps} \right\Vert_{L^2} \leq C \left( 4\lambda_0^2 + 2 \sqrt{2\pi} \lambda_0 \right). \end{equation}
Choosing $\lambda_0$ small enough such that $C \left( 4\lambda_0^2 + 2 \sqrt{2\pi} \lambda_0 \right)\leq \frac{1}{8}$, we obtain that
\begin{equation} \label{eqLinftysmall} \left\Vert a_{\eps} \right\Vert_{L^\infty} \leq \frac{1}{8}. \end{equation}

\noindent \textbf{ Claim 2: There exists $r_0>0$ such that  $\Vert b_\eps \Vert_{L^\infty(f^{-1}\left(\mathbb{D}_{r_0 \lambda_0\eps}(y_{\eps})\cap \Omega \right))} \leq \frac{1}{8}$.}
\smallskip

Let 
$$\delta_\eps = \frac{\mathrm{dist} (f^{-1}(y_\eps),\partial (f^{-1}(\D_{\lambda_0 \eps}(y_\eps) ))) }{2},$$
 then

\begin{equation}  \mathbb{D}_{ \delta_\eps}(f^{-1}(y_{\eps}) ) \subset f^{-1}\left(\mathbb{D}_{\lambda_0 \eps}(y_{\eps}) \right)
\end{equation}
We set $\tilde{b}_{\eps}(z) = b_{\eps}(f^{-1}(y_{\eps})+ \delta_\eps z)$. Then, let $B_\eps=\frac{\mathbb{D}-f^{-1}(y_{\eps})}{\delta_\eps}\cap \mathbb{D}$, we have that
$$ \begin{cases} \Delta \tilde{b}_{\eps} = 0 & \text{ in } B_\eps  \\
\tilde{b}_{\eps} = 0 & \text{ on } \partial B_\eps \cap \mathbb{D}. \end{cases}$$
By standard elliptic estimates, there is a constant $D>0$ independent from $\eps>0$ such that
\begin{equation} \label{eqellipticonb} \left\Vert \nabla\tilde{b}_{\eps} \right\Vert_{L^\infty (B_\eps \cap \D_{\frac{1}{2}})} \leq D \left\Vert \tilde{b}_{\eps} \right\Vert_{L^\infty\left( B_\eps \right)} . \end{equation}
We also know by $\eqref{eqLinftysmall}$, the fact that $v_\eps$ 
is bounded and that $h \in \D$ as harmonic extension of a $\mathbb{S}^1$ valued function, that
\begin{equation}
\label{bbond}
\left\Vert \tilde{b}_{\eps} \right\Vert_{L^\infty\left( B_\eps \right)} \leq \left\Vert b_{\eps} \right\Vert_{L^\infty} \leq \left\Vert v_{\eps} \right\Vert_{L^\infty} + \left\Vert a_{\eps} \right\Vert_{L^\infty} + \left\Vert h \right\Vert_{L^\infty} \leq C + \frac{1}{8} + 1 \leq C + 2.
\end{equation}
In particular, since we have that $\tilde{b}_{\eps} = 0$ on $\partial B_\eps \cap \mathbb{D}$, thanks to \eqref{eqellipticonb} and \eqref{bbond}, there is a radius $0<\rho_0<\frac{1}{2}$ such that
\begin{equation} \label{eqsmallbepstilde} \left\Vert b_{\eps} \right\Vert_{L^\infty\left( \mathbb{D} \cap \mathbb{D}_{\rho_0 \delta_\eps}(f^{-1}(y_{\eps})) \right)}  = \left\Vert \tilde{b}_{\eps} \right\Vert_{L^\infty\left( B_\eps \cap \mathbb{D}_{\rho_0} \right)} \leq \frac{1}{8}. \end{equation}
Finally since $f^{-1}$ is also quasiconformal, see  Corollary 4.5.10 of \cite{Hu}, thanks to Theorem \ref{KQCLQS}, for every $x\in \partial\D_{\lambda_0 \eps} (y_\eps)$ and $y\in \D_{r_0\lambda_0 \eps} (y_\eps)$ we have
$$ \frac{\vert f^{-1}(y)-f^{-1}(y_\eps)\vert }{\vert f^{-1}(x)-f^{-1}(y_\eps)\vert } \leq \eta(r_0)$$
which gives
$$ \vert f^{-1}(y)-f^{-1}(y_\eps)\vert  \leq \eta(r_0) (2\eta(1) \delta_\eps)$$
hence there is a constant $r_0>1$, smaller than $1$, depending only on the quasiconformal constant $K$ of $f^{-1}$, that is to say only on $\Omega$, such that 
\begin{equation}
\label{distorsionlemma2}
f^{-1}\left(\mathbb{D}_{r_0 \lambda_0 \eps}(y_{\eps})\cap \Omega \right) \subset  \mathbb{D}_{\rho_0 \delta_{\eps}}(f^{-1}(y_{\eps}))
\end{equation}
hence \eqref{eqsmallbepstilde} and \eqref{distorsionlemma2} give that
\begin{equation} \label{eqsmallbeps} \left\Vert \tilde{b}_{\eps} \right\Vert_{L^\infty\left( f^{-1}\left(\mathbb{D}_{r_0 \lambda_0 \eps}(y_{\eps})\cap \Omega \right) \right)}  \leq \frac{1}{8},
\end{equation}
which achieves the proof of claim 2.

\medskip

Now, we set $\eta_0 = r_0 \lambda_0$. To complete the proof it sufficies to proves that $\Vert h \Vert_{L^{\infty}(f^{-1}\left(\mathbb{D}_{\eta_0 \eps}(y_{\eps})\cap \Omega\right)} \geq \frac{3}{4}$. Notice that the $H^{\frac{1}{2}}\left(\mathbb{S}^1\right)$ norm of $h =v_\eps =u_\eps\circ f= g \circ f$ is independent of $\eps$, hence the result follows because $h$ is uniformly proper by Lemma \ref{Klem}. 
\end{proof}

The following proposition is an easy consequence of Proposition \ref{propBBHgradientbound} and Proposition \ref{propfarboundary}.

\begin{prop} \label{prquantum} Let $u_\eps$ be solutions of \eqref{eqeulerlagrange} which are uniformly bounded with respect to $\eps$, then  there are $\eps_0,\eta_0,\delta_0 >0$ depending only on $\sup_{\eps} \Vert u_\eps \Vert_\infty $, $\Omega$ and $g$ such that for any $0<\eps\leq \eps_0$, for any $x\in \Omega$ %
\begin{equation}\label{eqquantum}
\begin{split} 
\left\vert u_{\eps}(x)\right\vert \leq \frac{1}{2} 
\Rightarrow  \frac{1}{4\eps^2}\int_{\mathbb{D}_{\eta_0\eps}(x)}\left(1-\vert u_{\eps}\vert^2\right)^2dx \geq \delta_0. 
\end{split}
\end{equation}
\end{prop}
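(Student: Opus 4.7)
The plan is to combine the two preceding propositions in a straightforward two-step argument: first locate $x$ strictly in the interior at scale $\eps$, then use the gradient bound to propagate the smallness of $|u_\eps|$ from $x$ to a neighborhood, yielding the desired quantum of energy.

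First, let $\eta_0^{(1)}$ denote the constant produced by Proposition \ref{propfarboundary}. Suppose $x \in \Omega$ satisfies $|u_\eps(x)| \leq \tfrac{1}{2}$. Then the contrapositive of Proposition \ref{propfarboundary} gives
$$ \mathrm{dist}(x,\partial\Omega) > \eta_0^{(1)} \eps. $$
In particular, for every $y$ in the half-sized disk $\mathbb{D}_{\eta_0^{(1)}\eps/2}(x)$ we have $\mathrm{dist}(y,\partial\Omega) \geq \eta_0^{(1)}\eps/2$, so Proposition \ref{propBBHgradientbound} provides
$$ |\nabla u_\eps(y)| \leq C_1 \sup_{\eps}\|u_\eps\|_\infty \left( \frac{1}{\eps} + \frac{2}{\eta_0^{(1)} \eps} \right) =: \frac{M}{\eps}, $$
where $M$ depends only on $\sup_\eps\|u_\eps\|_\infty$ and $\eta_0^{(1)}$ (hence on $\Omega$ and $g$).

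Next, set $\eta_0 := \min\bigl(\eta_0^{(1)}/2,\, 1/(4M)\bigr)$. For any $y \in \mathbb{D}_{\eta_0 \eps}(x)$, a one-dimensional integration of the gradient bound along the segment $[x,y]$ yields
$$ |u_\eps(y)| \leq |u_\eps(x)| + \frac{M}{\eps} \cdot \eta_0 \eps \leq \frac{1}{2} + \frac{1}{4} = \frac{3}{4}, $$
so that $1 - |u_\eps(y)|^2 \geq 7/16$ throughout $\mathbb{D}_{\eta_0 \eps}(x)$. Moreover $\eta_0 \eps \leq \eta_0^{(1)}\eps/2 < \mathrm{dist}(x,\partial\Omega)$, so the disk $\mathbb{D}_{\eta_0 \eps}(x)$ is entirely contained in $\Omega$. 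Integrating,
$$ \frac{1}{4\eps^2} \int_{\mathbb{D}_{\eta_0 \eps}(x)} (1 - |u_\eps|^2)^2 \, dx \;\geq\; \frac{1}{4\eps^2} \left(\frac{7}{16}\right)^2 \pi (\eta_0 \eps)^2 \;=\; \frac{49\pi \eta_0^2}{1024} \;=:\; \delta_0. $$
Taking $\eps_0$ to be the same constant as in Proposition \ref{propfarboundary} completes the proof.

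There is no real obstacle here: the content is entirely in Propositions \ref{propfarboundary} and \ref{propBBHgradientbound}, which together force $x$ to lie at interior distance $\gtrsim \eps$ and bound the gradient by $\mathcal{O}(1/\eps)$ in a neighborhood of that size. The only delicate point is the choice of $\eta_0$: it must be small enough that (i) $\mathbb{D}_{\eta_0\eps}(x) \subset \Omega$ and (ii) the Lipschitz propagation keeps $|u_\eps|$ bounded away from $1$ across the whole disk. Both are handled by taking $\eta_0 = \min(\eta_0^{(1)}/2, 1/(4M))$, which is a constant depending only on the data allowed by the statement.
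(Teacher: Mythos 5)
Your proof is correct and is exactly the argument the paper has in mind: the paper states this proposition as ``an easy consequence of Proposition~\ref{propBBHgradientbound} and Proposition~\ref{propfarboundary}'' without spelling it out, and you have filled in the details in the obvious way — the contrapositive of Proposition~\ref{propfarboundary} forces $\mathrm{dist}(x,\partial\Omega)>\eta_0^{(1)}\eps$, the gradient bound then gives $|\nabla u_\eps|\lesssim 1/\eps$ on a disk of radius $\sim\eps$ around $x$, and Lipschitz propagation keeps $|u_\eps|\leq 3/4$ on a slightly smaller disk, yielding the quantum of energy. The bookkeeping (choice of $\eta_0=\min(\eta_0^{(1)}/2,1/(4M))$, verification that the disk stays inside $\Omega$ and inside the region where the gradient bound applies, and the arithmetic for $\delta_0$) all checks out.
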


\subsection{Weak convergence to $u_*$}

From now to the end of the subsection, we follow the lines of part 3 in \cite{Struwe94}. First, the following claim is a very simple consequence of \eqref{eqboundedassumption}, Lemma \ref{norm1} and Proposition \ref{prquantum}.

\begin{cl} \label{cluniformlyfinitedisjointball} There are $\eta_0>0$ and  $J_0\in \mathbb{N}$ depending only on $\Omega, g$ such that for any disjoint collection of balls $\left(\mathbb{D}_{\eta_0\eps}(x_j^{\eps})\right)_{1\leq j\leq J}$ such that $\left\vert u_{\eps}(x_j^{\eps}) \right\vert \leq \frac{1}{2}$, then $J\leq J_0$.
\end{cl}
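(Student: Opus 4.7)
The plan is to combine the $\eta$-compactness property of Proposition \ref{prquantum} with the global energy bound \eqref{eqboundedassumption}; the claim is then just a pigeonhole-style counting argument.

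First I would fix a common $\eta_0>0$ and a $\delta_0>0$ so that the conclusions of both Proposition \ref{propfarboundary} and Proposition \ref{prquantum} hold simultaneously with this same $\eta_0$. This is harmless: both propositions keep their conclusion if one further shrinks $\eta_0$, so one may simply take the minimum of the two constants that they produce. The constant $\delta_0$ produced by Proposition \ref{prquantum} for this common $\eta_0$ depends only on $\Omega$ and $g$ (through $\sup_\eps \|u_\eps\|_\infty$, which by Proposition \ref{norm1} is bounded by $1$ for minimizers).

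Next, given a disjoint family $(\mathbb{D}_{\eta_0\eps}(x_j^\eps))_{1\le j\le J}$ with $|u_\eps(x_j^\eps)|\le \frac{1}{2}$, Proposition \ref{propfarboundary} yields $\mathrm{dist}(x_j^\eps,\partial\Omega)>\eta_0\eps$, so each of these balls sits entirely inside $\Omega$; this is the crucial point that keeps the integrals honest and is the feature that most directly uses the quasidisk hypothesis through Proposition \ref{propfarboundary}. Proposition \ref{prquantum} then delivers the quantum of energy
$$ \frac{1}{4\eps^2}\int_{\mathbb{D}_{\eta_0\eps}(x_j^\eps)}\bigl(1-|u_\eps|^2\bigr)^2\, dx \;\ge\; \delta_0 $$
for each $j$. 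Summing over $j$ and using both the disjointness of the balls and the inclusion into $\Omega$, followed by the global bound \eqref{eqboundedassumption}, one obtains
$$ J\,\delta_0 \;\le\; \frac{1}{4\eps^2}\int_\Omega \bigl(1-|u_\eps|^2\bigr)^2\, dx \;\le\; C,$$
whence $J\le C/\delta_0 =: J_0$, a constant depending only on $\Omega$ and $g$.

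There is no real obstacle here, since both of the serious analytic ingredients have already been established. The only point requiring care is to ensure that the quantum of energy at each $x_j^\eps$ is actually located inside $\Omega$ (so that summing disjoint contributions cannot exceed the total integral on $\Omega$); this is precisely what Proposition \ref{propfarboundary} provides and, as emphasized in the introduction, would fail without the quasidisk assumption.
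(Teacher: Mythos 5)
Your proposal is correct and follows exactly the route the paper has in mind: the paper declares the claim a ``very simple consequence'' of the global potential bound \eqref{eqboundedassumption}, the $L^\infty$ bound of Proposition \ref{norm1}, and the $\eta$-compactness of Proposition \ref{prquantum}, and your pigeonhole argument (sum the disjoint quanta and compare with the global bound) is precisely the intended one. Your explicit appeal to Proposition \ref{propfarboundary} to keep the balls inside $\Omega$ is a reasonable way of spelling out a detail that in the paper's presentation is already absorbed into Proposition \ref{prquantum}, whose own proof rests on Proposition \ref{propfarboundary}; the only loose phrase is that Proposition \ref{prquantum}'s conclusion does not survive shrinking $\eta_0$ with $\delta_0$ fixed, but the harmless fix (the quantum radius is already built to be no larger than the boundary safety radius, or one shrinks $\delta_0$ accordingly) is immediate.
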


By a classical Vitali's covering lemma, we can find a family of disjoint balls $\left(\mathbb{D}_{\eta_0\eps}(x_j^{\eps})\right)_{1\leq j\leq J_{\eps}}$ such that $\left\vert u_{\eps}(x_j^{\eps}) \right\vert \leq \frac{1}{2}$ for any $1\leq j\leq J_{\eps}$ and
$$ \left\{ x\in \Omega ; \left\vert u_{\eps}(x) \right\vert \leq \frac{1}{2} \right\} \subset \bigcup_{1\leq j \leq J_{\eps}} \mathbb{D}_{5\eta_0\eps}(x_j^{\eps}).$$
By Claim \ref{cluniformlyfinitedisjointball}, $J_{\eps} \leq J_0$ and for some radius $r>0$, we denote by
$$ {\Omega'}_{\eps,r} = \Omega' \setminus \bigcup_{1\leq j \leq J_{\eps}} \mathbb{D}_{r}(x_j^{\eps}) \text{ and } \Omega_{\eps,r} = {\Omega'}_{\eps,r} \cap \Omega$$
where we recall that $\Omega'$ is the extension of $\Omega$ given by Lemma \ref{extl}. ${\Omega'}_{\eps,r} $ is defined for technical reasons since we have the property $\bigcup_{1\leq j \leq J_{\eps}} \mathbb{D}_{r}(x_j^{\eps}) \subset \Omega'$ for $r$ small enough.

Notice that by definition, $\left\vert u_{\eps}(x) \right\vert \geq \frac{1}{2}$ for $x\in {\Omega}_{\eps,r}$ if $r\geq 5\nu_0\eps$ and that it is also true for the extension $\hat{u}_{\eps}$ given by Lemma \ref{extl} then: 
$$ \left\vert \hat{u}_{\eps}(x) \right\vert \geq \frac{1}{2} \text{ for } x\in {\Omega'}_{\eps,r} \text{ if } r\geq 5\nu_0\eps $$

\medskip

Let us remind the following general proposition orginally due to Brezis, Merle and Rivière \cite{BMR}.
\begin{prop}[\cite{Struwe94} Proposition 3.4 and \cite{Struwe94bis}] \label{propBMRStruwe}

Let $0<r_0 \leq r_1 \leq r_2$ and $C>0$. Then for any $ v\in H^1\left( A_{r_0,r_2} \right)$ such that $\frac{1}{2} \leq \left\vert v \right\vert \leq 1$ a.e on $A_{r_0,r_1}$ and such that $v$ has degree $d$ on every circle $S_r$ for $r_1 \leq r\leq r_2$, and such that
$$ \frac{1}{r_1^2} \int_{A_{r_0,r_2}} \left(1-\vert v \vert^2 \right)^2 \leq C \text{ and } \int_{A_{r_0,r_2}} \left\vert \nabla v \right\vert^2 \leq C\vert\ln r_0\vert ,$$
then for any $r_1 \leq r \leq r_2$,
$$ \int_{A_{r_1,r} } \left\vert \nabla v \right\vert^2 \geq 2\pi d^2 \left(\ln\left(\frac{r}{r_1}\right)- D\right) - D$$
where $D$ only depends on $C$.
\end{prop}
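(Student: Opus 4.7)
\begin{pr}
The plan is to combine the classical slicing technique on the concentric circles $S_s$, $r_1\leq s\leq r$, with a Cauchy--Schwarz argument that exploits the degree hypothesis, and then to control the error terms using the $L^2$ bound on $1-\vert v\vert^2$.

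First, I would fix $s\in[r_1,r]$. Since $\vert v\vert\geq\frac{1}{2}$, one can (locally) write $v=\rho e^{i\varphi}$ on $S_s$ with $\rho=\vert v\vert$; the degree hypothesis then reads $\int_0^{2\pi}\partial_\theta\varphi\, d\theta=2\pi d$. Because $\vert\partial_\theta v\vert^2=(\partial_\theta\rho)^2+\rho^2(\partial_\theta\varphi)^2\geq \rho^2(\partial_\theta\varphi)^2$, applying Cauchy--Schwarz to $2\pi d=\int_0^{2\pi}\frac{1}{\rho}\cdot\rho\,\partial_\theta\varphi\, d\theta$ will give
\begin{equation*}
\int_0^{2\pi}\vert\partial_\theta v\vert^2\, d\theta\geq \frac{(2\pi d)^2}{\alpha(s)},\qquad \alpha(s):=\int_0^{2\pi}\rho^{-2}\,d\theta.
\end{equation*}
Using $\vert\nabla v\vert^2\geq s^{-2}\vert\partial_\theta v\vert^2$ and integrating in polar coordinates then yields
\begin{equation*}
\int_{A_{r_1,r}}\vert\nabla v\vert^2\, dx \geq \int_{r_1}^r \frac{(2\pi d)^2}{s\,\alpha(s)}\, ds.
\end{equation*}

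Next I would linearize. The bound $\rho\geq\frac{1}{2}$ gives $\rho^{-2}-1=\frac{1-\rho^2}{\rho^2}\leq 4(1-\rho^2)$, hence $\alpha(s)\leq 2\pi+4\int_0^{2\pi}(1-\rho^2)\, d\theta$. Combined with $\frac{1}{1+x}\geq 1-x$ for $x\geq 0$, this transforms the previous estimate (after integration in $s$) into
\begin{equation*}
\int_{A_{r_1,r}}\vert\nabla v\vert^2\, dx \geq 2\pi d^2\ln(r/r_1) - 4 d^2 \int_{A_{r_1,r}}\frac{1-\vert v\vert^2}{\vert x\vert^2}\, dx.
\end{equation*}

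The error term will be controlled via Cauchy--Schwarz together with the first hypothesis:
\begin{equation*}
\int_{A_{r_1,r}}\frac{1-\vert v\vert^2}{\vert x\vert^2}\, dx \leq \Bigl(\int_{A_{r_1,r}}\frac{(1-\vert v\vert^2)^2}{\vert x\vert^2}\, dx\Bigr)^{1/2}\Bigl(\int_{A_{r_1,r}}\frac{dx}{\vert x\vert^2}\Bigr)^{1/2}\leq \sqrt{2\pi C\ln(r/r_1)},
\end{equation*}
using $\int_{A_{r_1,r}}(1-\vert v\vert^2)^2/\vert x\vert^2\leq r_1^{-2}\int_{A_{r_0,r_2}}(1-\vert v\vert^2)^2\leq C$. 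To conclude, I would absorb the resulting $\sqrt{\ln(r/r_1)}$ into the leading $\ln(r/r_1)$ term via a Young-type inequality $a\sqrt{t}\leq \varepsilon t+a^2/(4\varepsilon)$, producing the claimed form $2\pi d^2(\ln(r/r_1)-D)-D$ with $D$ depending only on $C$ (the integer factor $d$ being absorbed by $d^2/(2\pi d^2+1)\leq 1/(2\pi)$).

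The hard part will be this last absorption step: the Young inequality naturally costs a fraction of the leading coefficient, so the splitting parameters must be chosen carefully in order to preserve the sharp constant $2\pi d^2$ in front of $\ln(r/r_1)$ while keeping the remainder independent of $d$.
\end{pr}
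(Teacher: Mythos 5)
Your overall strategy---polar decomposition on the circles $S_s$, Cauchy--Schwarz to extract the degree, linearization of $\rho^{-2}$ using $\rho\geq\frac{1}{2}$, reduction to an error term $\int_{A_{r_1,r}}\frac{1-|v|^2}{|x|^2}$---is exactly the Brezis--Merle--Rivi\`ere/Struwe argument (the paper itself does not reprove the proposition but only cites \cite{Struwe94,Struwe94bis}), and everything up to the intermediate inequality
\begin{equation*}
\int_{A_{r_1,r}}\left\vert\nabla v\right\vert^2\,dx \;\geq\; 2\pi d^2\ln\!\left(\tfrac{r}{r_1}\right) - 4d^2\int_{A_{r_1,r}}\frac{1-\vert v\vert^2}{\vert x\vert^2}\,dx
\end{equation*}
is correct. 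The gap is in the last two steps. Your Cauchy--Schwarz in step~7 splits the integrand as $\frac{1-\vert v\vert^2}{\vert x\vert}\cdot\frac{1}{\vert x\vert}$, which produces the factor $\bigl(\int_{A_{r_1,r}}\vert x\vert^{-2}\bigr)^{1/2}=\sqrt{2\pi\ln(r/r_1)}$. This error grows unboundedly in $r$, and \emph{no} Young-type absorption can turn $2\pi d^2 t-4d^2\sqrt{2\pi C t}$ into $2\pi d^2(t-D)-D$ with $D=D(C)$: that would require $4d^2\sqrt{2\pi C\,t}\leq(2\pi d^2+1)D$ for \emph{all} $t=\ln(r/r_1)\in[0,\ln(r_2/r_1)]$, which fails as soon as $\ln(r/r_1)$ is large, and applying $a\sqrt{t}\leq\varepsilon t+a^2/(4\varepsilon)$ with a fixed $\varepsilon>0$ necessarily degrades the sharp leading coefficient $2\pi d^2$. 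So the ``absorption'' paragraph is wrong as written.

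The fix is to split the weight differently in the Cauchy--Schwarz step. Write $\frac{1-\vert v\vert^2}{\vert x\vert^2}=(1-\vert v\vert^2)\cdot\vert x\vert^{-2}$ and estimate
\begin{equation*}
\int_{A_{r_1,r}}\frac{1-\vert v\vert^2}{\vert x\vert^2}\,dx \;\leq\; \left(\int_{A_{r_1,r}}\bigl(1-\vert v\vert^2\bigr)^2\,dx\right)^{1/2}\left(\int_{A_{r_1,r}}\frac{dx}{\vert x\vert^4}\right)^{1/2} \;\leq\; \sqrt{Cr_1^2}\cdot\sqrt{\pi/r_1^2}\;=\;\sqrt{\pi C},
\end{equation*}
using the hypothesis $\int_{A_{r_0,r_2}}(1-\vert v\vert^2)^2\leq Cr_1^2$ and $\int_{A_{r_1,r}}\vert x\vert^{-4}\,dx=\pi(r_1^{-2}-r^{-2})\leq \pi r_1^{-2}$. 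This time the error term is bounded \emph{uniformly in $r$}, and the conclusion follows immediately with $D=2\sqrt{C/\pi}$ (since $4d^2\sqrt{\pi C}\leq(2\pi d^2+1)\cdot 2\sqrt{C/\pi}$), with no Young inequality at all. One further point you gloss over: your polar decomposition $v=\rho e^{i\varphi}$ with $\rho\geq\frac{1}{2}$ is used on the circles $S_s$ for $s\in[r_1,r]$, whereas the statement (as printed) asserts $\frac{1}{2}\leq\vert v\vert\leq 1$ only on $A_{r_0,r_1}$; you should make explicit that you are using the pointwise bound on $A_{r_1,r_2}$.
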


The next proposition is a consequence of \eqref{upperboundBBH}, Proposition \ref{propBMRStruwe} applied on the extended function $\hat{u}_{\eps}$ on an accurate disjoint union of annuli centered at points in the set $\left(x_{j}^{\eps}\right)_{1\leq j \leq J_{\eps}}$ almost filling $\left(\Omega'\right)_{\eps,r}$ and the fact that the degree of $\hat{u}_{\eps}$ is $1$ on $\partial\Omega'$.

\begin{prop}[\cite{Struwe94} Proposition 3.3] \label{propenergyeststruwe}
There are constants $\kappa>0$, $C$ depending only on $\Omega$ and $g$ such that for any $\eps < r < \delta$,
\begin{equation} \label{eqenergyestimatefarbaddisks}  \int_{\Omega'_{\eps,r}} \left\vert \nabla u_{\eps} \right\vert^2 \leq  2\pi \vert \ln r \vert  + C \end{equation}
\end{prop}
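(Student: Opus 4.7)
The plan is to combine the global upper bound on the energy coming from \eqref{upperboundBBH} with a lower bound on the energy inside a collection of annuli surrounding the bad disks, obtained from Proposition \ref{propBMRStruwe}. Ultimately, what is above the log is distributed entirely around the bad disks, leaving an upper bound of the form $2\pi|\ln r|+C$ on the complement.

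First, I would pass to the extension $\hat{u}_\eps$ provided by Lemma \ref{extl}. Since $\hat u$ is smooth with $|\hat u|=1$ on $\Omega'\setminus\Omega$, the energy $\tfrac{1}{2}\int_{\Omega'\setminus\Omega}|\nabla\hat u|^2$ is an $\eps$-independent constant, and \eqref{upperboundBBH} upgrades to
$$ E^{\Omega'}_\eps(\hat u_\eps)\leq \pi|\ln\eps|+C.$$
Furthermore, as noted in the paragraph preceding the proposition, the extension satisfies $|\hat u_\eps|\geq 1/2$ on $\Omega'_{\eps,r}$ as soon as $r\geq 5\eta_0\eps$, and the total degree of $\hat u_\eps$ on $\partial\Omega'$ equals $\deg(g)=1$.

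Next, I would organize the bad-disk centers $(x_j^\eps)_{1\leq j\leq J_\eps}$ into disjoint annuli on which Proposition \ref{propBMRStruwe} applies. Using the uniform bound $J_\eps\leq J_0$ from Claim \ref{cluniformlyfinitedisjointball}, a finite Besicovitch/Vitali-type argument produces radii $\rho_j\in[\eta_0\eps,r]$ and a subcollection of indices $I$ such that the annuli $A_j:=A_{\eta_0\eps,\rho_j}(x_j^\eps)$ for $j\in I$ are pairwise disjoint, contained in $\Omega'_{\eps,\eta_0\eps}\cap\Omega'_{\eps,r}$, and such that the union $\bigcup_{j\in I}\overline{\mathbb{D}_{\rho_j}(x_j^\eps)}$ still contains every bad-disk center. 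Because $|\hat u_\eps|\geq 1/2$ on each $A_j$, the map $\hat u_\eps/|\hat u_\eps|$ is continuous there, and homotopy invariance of the degree together with Theorem \ref{somdeg} gives $\sum_{j\in I}d_j=1$, where $d_j$ is the degree of $\hat u_\eps$ on any circle inside $A_j$; in particular $\sum_j d_j^2\geq 1$. On each annulus, the bounds $\frac{1}{\eps^2}\int(1-|\hat u_\eps|^2)^2\leq C$ from \eqref{eqboundedassumption} and $\int_{A_j}|\nabla\hat u_\eps|^2\leq C|\ln\eps|$ from the upper bound put us in the hypotheses of Proposition \ref{propBMRStruwe} (with $r_0=r_1=\eta_0\eps$ and $r_2=\rho_j$, or a dyadic shrinkage thereof), yielding
$$\int_{A_j}|\nabla\hat u_\eps|^2\geq 2\pi d_j^2\bigl(\ln(\rho_j/(\eta_0\eps))-D\bigr)-D.$$

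Finally I would assemble the pieces. Writing
$$\int_{\Omega'}|\nabla\hat u_\eps|^2=\int_{\Omega'_{\eps,r}}|\nabla\hat u_\eps|^2+\sum_{j\in I}\int_{A_j}|\nabla\hat u_\eps|^2+\text{(remainder)},$$
discarding the remainder, inserting the lower bound on each $A_j$, and using $\sum_j d_j^2\geq 1$ together with the finite cardinality $|I|\leq J_0$, I obtain
$$\int_{\Omega'_{\eps,r}}|\nabla\hat u_\eps|^2\leq 2\pi|\ln\eps|+C-2\pi\bigl(|\ln\eps|-|\ln r|\bigr)+C'=2\pi|\ln r|+C'',$$
from which \eqref{eqenergyestimatefarbaddisks} follows since $|\nabla u_\eps|=|\nabla\hat u_\eps|$ on $\Omega_{\eps,r}\subset\Omega'_{\eps,r}$. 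The main technical obstacle is the Besicovitch step: ensuring the annuli are disjoint, contained in $\Omega'_{\eps,r}$, and capture all bad-disk degree contributions, in a setting where several bad disks may cluster. This is where the uniform bound $J_\eps\leq J_0$ is essential, as it reduces the organization to a purely combinatorial argument with a bounded number of balls.
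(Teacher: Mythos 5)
Your plan matches the paper's: the paper gives no more than the sketch you reproduce (extend to $\Omega'$, use \eqref{upperboundBBH}, apply Proposition \ref{propBMRStruwe} on a disjoint family of annuli around the bad disks, and use $\deg(\hat u_\eps,\partial\Omega')=1$), deferring the combinatorial details to Struwe's Proposition~3.3. One remark on the step you flag yourself: a single-layer Vitali selection of radii $\rho_j$ with disjoint annuli whose outer disks cover all centers is not quite the right structure. If the bad disks cluster, some $\rho_j$ must stay well below $r$ to keep $A_j$ free of other bad disks (otherwise $|\hat u_\eps|\geq 1/2$ and the constancy of the degree on circles, both hypotheses of Proposition \ref{propBMRStruwe}, fail inside $A_j$), and then $\sum_j 2\pi d_j^2(\ln(\rho_j/(\eta_0\eps))-D)-D$ falls short of $2\pi\ln(r/(\eta_0\eps))-C$. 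The correct construction is the usual multi-scale ball-growing and merging: grow the radii simultaneously, and each time two families collide, merge them into a larger ball whose degree is the sum of the two; between consecutive merges you have a disjoint family of concentric annuli with degrees summing to $1$, hence $\sum d_c^2\geq1$ at every intermediate scale, and there are at most $J_0-1$ merges, each costing only a bounded additive error. Summing these contributions over all scales between $\eta_0\eps$ and $r$ gives the $2\pi\ln(r/(\eta_0\eps))-C$ lower bound you need. With that replacement, your assembly $\int_{\Omega'_{\eps,r}}|\nabla\hat u_\eps|^2\leq 2\pi|\ln r|+C$ goes through exactly as written; this is precisely the content of \cite{Struwe94}, Proposition~3.3, which the paper invokes.
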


\begin{cl}[Remark that follows Proposition 3.3 of \cite{Struwe94}] \label{propconvstruwe}
Let $\eps_{k} \to 0$ be a sequence, 

Then there is a subsequence $\eps_{k_l}$ such that $\left( J_{\eps_{k_l}} \right)_{l\in \mathbb{N}}$ is constant 
and such that for any $1\leq j \leq J:= J_{\eps_{k_l}}$ there is $a_j \in \overline{\Omega}$ such that $x_j^{\eps_{k_l}} \to a_j$ as $l\to +\infty$. We denote
$$ S = \{a_j ; 1\leq j \leq J\} $$

Moreover there is $a \in S$ and $u_{\star} \in H^1_{loc} (\overline{\Omega}\setminus\{a\}) \cap \mathcal{C}^{\infty}\left(\Omega\setminus \{a\} \right)$ such that $u_{\eps_{k_l}} \to u_{\star}$ in $\mathcal{C}_{loc}^{\infty}\left(\Omega\setminus S \right) $  as $l\to +\infty$. Moreover, we have that
\begin{equation} \label{eqenergyestimatefara}  \int_{\Omega\setminus \mathbb{D}_r(a)} \left\vert \nabla u_{\star} \right\vert^2 \leq  2\pi \ln\left(\frac{1}{r}\right) + \kappa . \end{equation}
\begin{equation} \label{eqenergyestimatefaraannulus}  \int_{ \mathbb{D}_{r_2}(a) \setminus \mathbb{D}_{r_1}(a)} \left\vert \nabla u_{\star} \right\vert^2 \leq  2\pi \ln\left(\frac{r_2}{r_1}\right) + \kappa' . \end{equation}
and that $u_{\star}$ is a harmonic map into $\mathbb{S}^1$.
\end{cl}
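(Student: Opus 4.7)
The plan is to follow the classical Struwe-style argument, exploiting the ingredients just established above (Proposition \ref{norm1}, Proposition \ref{propBBHgradientbound}, Claim \ref{cluniformlyfinitedisjointball} and Proposition \ref{propenergyeststruwe}). First, Claim \ref{cluniformlyfinitedisjointball} gives $J_{\eps_k}\leq J_0$, so a first extraction makes $J_{\eps_{k_l}}=J$ constant, and since each $x_j^{\eps_{k_l}}$ lives in the compact set $\overline{\Omega}$, a diagonal extraction produces $x_j^{\eps_{k_l}}\to a_j\in\overline{\Omega}$, defining $S=\{a_1,\dots,a_J\}$. For the convergence on $\Omega\setminus S$, I fix a compact $K\subset\Omega\setminus S$: for $l$ large enough, $K$ is disjoint from every bad disk and in fact $K\subset\Omega_{\eps_{k_l},r_K}$ for some $r_K>0$ depending only on $\mathrm{dist}(K,S)$, so Proposition \ref{propenergyeststruwe} yields uniform $H^1(K)$ bounds on $u_{\eps_{k_l}}$. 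Combined with $\|u_\eps\|_\infty\leq 1$ and a Cantor diagonal argument over a compact exhaustion of $\Omega\setminus S$, this produces $u_{\eps_{k_l}}\rightharpoonup u_\star$ weakly in $H^1_{loc}$ and strongly in $L^p_{loc}$ on $\Omega\setminus S$.

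To upgrade to $C^\infty_{loc}$ convergence, I observe that on such a $K$ one has $|u_\eps|\geq 1/2$ by construction, and the gradient bound of Proposition \ref{propBBHgradientbound} combined with $\int_\Omega\eps^{-2}(1-|u_\eps|^2)^2\leq C$ forces $|u_\eps|\to 1$ in $L^\infty(K)$ by the classical local-scale contradiction argument. I then write locally $u_\eps=\rho_\eps e^{i\phi_\eps}$: the phase equation $\mathrm{div}(\rho_\eps^2\nabla\phi_\eps)=0$ is uniformly elliptic with coefficients converging to $1$, yielding $C^\infty_{loc}$ compactness of $\phi_\eps$ modulo additive constants, and a bootstrap on the $\rho_\eps$ equation gives $\rho_\eps\to 1$ smoothly. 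Hence $u_{\eps_{k_l}}\to u_\star$ in $C^\infty_{loc}(\Omega\setminus S)$ with $|u_\star|=1$; wedging the Euler--Lagrange equation \eqref{eqeulerlagrange} with $u_\eps$ gives the conservation law $\mathrm{div}(u_\eps\wedge\nabla u_\eps)=0$, which passes to the limit to $\mathrm{div}(u_\star\wedge\nabla u_\star)=0$, i.e.\ the weak $\mathbb{S}^1$-harmonic map equation for $u_\star$.

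For each interior $a_j\in S\cap\Omega$, set $d_j=\mathrm{deg}(u_\star,a_j)\in\mathbb{Z}$, well-defined by the smoothness of $u_\star$ on $\Omega\setminus S$; Theorem \ref{somdeg} applied on $\Omega$ minus small disks around the $a_j$'s together with $\mathrm{deg}(g)=1$ forces $\sum_j d_j=1$. Passing to the limit in Proposition \ref{propenergyeststruwe} with $r$ smaller than half the minimum distance between distinct $a_j$'s yields $\int_{\Omega\setminus\cup_j\mathbb{D}_r(a_j)}|\nabla u_\star|^2\leq 2\pi|\ln r|+C$, while Proposition \ref{propBMRStruwe} applied on annuli around each $a_j$ before passing to the limit provides the matching lower bound $\int_{\Omega\setminus\cup_j\mathbb{D}_r(a_j)}|\nabla u_\star|^2\geq 2\pi(\sum_j d_j^2)|\ln r|-C$. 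Comparing, $\sum_j d_j^2\leq 1$, which together with $\sum_j d_j=1$ and $d_j\in\mathbb{Z}$ forces exactly one $d_j$ to equal $1$ and all others to vanish; by the standard removable-singularity theorem for $H^1$-harmonic maps into $\mathbb{S}^1$, the $a_j$'s with $d_j=0$ are regular points of $u_\star$, and declaring $a$ to be the unique singular point gives $u_\star\in H^1_{loc}(\overline{\Omega}\setminus\{a\})\cap C^\infty(\Omega\setminus\{a\})$. Estimate \eqref{eqenergyestimatefara} is then the above upper bound (the bounded contributions from the removable $a_j$'s being absorbed into the constant), and \eqref{eqenergyestimatefaraannulus} follows by combining the upper bound on $\Omega'_{\eps,r_1}$ with the Proposition \ref{propBMRStruwe} lower bound on $\Omega\setminus\mathbb{D}_{r_2}(a)$ and subtracting before passing to the limit. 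The main obstacle is this last step: isolating the unique true singularity $a$ among the $J$ candidates requires carefully balancing the topological constraint $\sum d_j=1$ against the matching upper and lower log-energy estimates, and then invoking removability to discard the zero-degree candidates.
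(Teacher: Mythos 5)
Your outline is the standard Struwe argument, which is precisely what the paper cites (it credits the remark after Proposition 3.3 of \cite{Struwe94} and gives no proof of its own), so the route is the same. The extraction, the uniform $H^1$ bounds on compacta of $\Omega\setminus S$ via Proposition \ref{propenergyeststruwe}, the upgrade to $C^\infty_{loc}$ via $|u_\eps|\geq 1/2$ and the $\rho\,e^{i\phi}$ decomposition, the passage to the limit of $\mathrm{div}(u_\eps\wedge\nabla u_\eps)=0$, the degree balance $\sum d_j=1$ versus $\sum d_j^2\leq 1$ from matching log-energy bounds, and the removable singularity step — all of this is correct and is exactly what the cited reference supplies.

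There is, however, one genuine gap in the degree-arithmetic step, and it is precisely at the point you flag as the main obstacle. You define $d_j$ only for the \emph{interior} limit points $a_j\in S\cap\Omega$ and then apply Theorem \ref{somdeg} on $\Omega$ minus small disks around those interior $a_j$'s. But at this stage of the paper it is \emph{not} yet known that $S\subset\Omega$; the possibility $a_j\in\partial\Omega$ is only excluded later, in Section \ref{secinteriorsingularity} (and only for the single point $a$). If some $a_j$ lies on $\partial\Omega$, then on the one hand $\mathrm{deg}(u_\star|_{\partial\Omega})$ is not well defined (since $u_\star$ need not be $H^1$ up to the boundary near that $a_j$), so Theorem \ref{somdeg} cannot be applied as you write it; and on the other hand the energy concentrating on the half-annuli $A_{r_1,r}(a_j)\cap\Omega$ around a boundary $a_j$ is not accounted for in your lower bound $\int_{\Omega\setminus\cup\mathbb{D}_r(a_j)}|\nabla u_\star|^2\geq 2\pi\bigl(\sum_j d_j^2\bigr)|\ln r|-C$, where the sum ranges only over interior $a_j$. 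This breaks both the identity $\sum_j d_j=1$ and the inequality $\sum_j d_j^2\leq 1$.

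The fix is exactly what the paper already sets up: run the entire argument on the extended domain $\Omega'$ with the extension $\hat u_\eps$ from Lemma \ref{extl} (note that Proposition \ref{propenergyeststruwe} is itself stated on $\Omega'_{\eps,r}$, not $\Omega_{\eps,r}$). Since every $a_j\in\overline\Omega$ is an interior point of the smooth domain $\Omega'$, the degrees $\hat d_j=\mathrm{deg}(\hat u_\star,a_j)$ are well defined for all $j$, Theorem \ref{somdeg} on $\Omega'\setminus\cup_j\mathbb{D}_r(a_j)$ with $\mathrm{deg}(\hat u|_{\partial\Omega'})=1$ gives $\sum_j\hat d_j=1$, Proposition \ref{propBMRStruwe} applied to $\hat u_\eps$ on full annuli around each $a_j$ gives the matching lower log bound, and the rest of your argument goes through verbatim with $d_j$ replaced by $\hat d_j$. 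Once restricted back to $\Omega$, $\hat u_\star=u_\star$ and $\hat d_j=d_j$ for the interior $a_j$, so the conclusion of the claim follows unchanged.
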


Notice that combining proposition \ref{propBMRStruwe} and Claim \ref{propconvstruwe}, $a$ is a degree one singularity for $u_{\star}$ and the other points of $S\setminus\{a\}$ are degree zero singularities. In the next subsections, we aim at proving that $a\in \Omega$ is an interior point, that $S\setminus\{a\} = \emptyset$ and that $u_{\eps_{k_l}} \to u_{\star}$ in $H^{1}_{loc}\left(\overline\Omega\setminus \{a\} \right) $ as $l\to +\infty$.
These properties will complete the proof of (i)-(v) of Theorem \ref{theoGL}. The last point of Theorem \ref{theoGL} is a direct consequence of Proposition 7.1 \cite{FM}.

\subsection{The degree one singularity is an interior point of the domain} \label{secinteriorsingularity}

We aim at proving that $a\in \Omega$. Let us assume by contradiction that $a \in \partial \Omega$ and without loss of generality that $a=0$.

As in the proof of of the convergence of $u_\eps$, $u_*$ is extended  to a smooth domain $\Omega'$ thanks to Lemma \ref{extl}. In order to light the notation, we will denote its extension $u_*$. Moreover we can also apply Lemma \ref{extl} to $\Omega^c$ in order get the following claim
\begin{cl}
\label{extension3}
Let $\Gamma$ a quasicircle and $g\in H^\frac{1}{2}(\Gamma,\mathbb{S}^1)$ then there exists a smooth neighborhood $\Omega''$ of $\Gamma$ diffeomorphic to an annulus and $u_0\in H^{1}(\Omega'',\mathbb{S}^1)$ such that $u=g$ on $\Gamma$.
\end{cl}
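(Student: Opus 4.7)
\begin{pr}[Proof plan for Claim \ref{extension3}]
The strategy is to apply the construction of Lemma \ref{extl} on both sides of $\Gamma$ and glue the two extensions along $\Gamma$. The key observation is that Lemma \ref{extl} produces a one-sided (exterior) extension of $g$ by normalizing the harmonic extension of $g$ transported to the model exterior disk; the symmetric argument, using the interior uniformization of $\Omega$, yields an interior extension with analogous properties.

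First, apply Lemma \ref{extl} directly to the bounded component $\Omega$ of $\mathbb{C}\setminus\Gamma$. This gives a smooth domain $\Omega'\supset\overline{\Omega}$ and an $\mathbb{S}^1$-valued map $\hat u^{+}\in H^{1}(\Omega'\setminus\overline{\Omega})\cap C^{\infty}(\overline{\Omega'}\setminus\overline{\Omega})$ satisfying $\hat u^{+}=g$ on $\Gamma$. Next, mirror the construction on the inside: pick an interior uniformization $f:\mathbb{D}\to\Omega$ (which exists and extends quasiconformally to $\mathbb{C}$ by Theorem \ref{Extension}), set $\tilde g:=g\circ f|_{\mathbb{S}^{1}}\in H^{1/2}(\mathbb{S}^{1},\mathbb{S}^{1})$ (Theorem \ref{H12}), let $\tilde u\in H^{1}(\mathbb{D},\mathbb{R}^{2})\cap C^{\infty}(\mathbb{D})$ be the harmonic extension of $\tilde g$, and put $v:=\tilde u\circ f^{-1}$ on $\Omega$. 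By Lemma \ref{Klem}, there is $\delta>0$ such that $\tfrac{3}{4}\le|\tilde u|\le\tfrac{5}{4}$ on $\mathbb{D}\setminus\mathbb{D}_{1-\delta}$, so the normalization $\hat u^{-}:=v/|v|$ is well-defined, smooth, and $\mathbb{S}^{1}$-valued on the curvilinear annulus $\Omega_{-}:=f(\mathbb{D}\setminus\overline{\mathbb{D}_{1-\delta}})\subset\Omega$, with $\hat u^{-}=g$ on $\Gamma$ and finite $H^{1}$ energy controlled by $\|g\|_{H^{1/2}(\Gamma)}$ and the quasiconformal constant of $f$.

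Now define
\[
\Omega'':=\Omega_{-}\cup\Gamma\cup(\Omega'\setminus\overline{\Omega}),\qquad
u_{0}:=\begin{cases}\hat u^{-}&\text{in }\Omega_{-},\\ \hat u^{+}&\text{in }\Omega'\setminus\overline{\Omega}.\end{cases}
\]
Since $\Gamma$ is a Jordan curve and both inner and outer boundaries of $\Omega''$ can be taken smooth (shrinking/enlarging slightly if needed), $\Omega''$ is a smooth annular neighborhood of $\Gamma$. The two pieces of $u_{0}$ live in $H^{1}$ and have the same trace $g$ on $\Gamma$, so $u_{0}\in H^{1}(\Omega'',\mathbb{S}^{1})$ with $u_{0}=g$ on $\Gamma$ by the standard $H^{1}$-gluing lemma across a rectifiable interface.

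The only slightly delicate point is to ensure that the normalization $v/|v|$ is indeed in $H^{1}$ up to $\Gamma$ on each side, which is not automatic for $\mathbb{S}^{1}$-valued normalizations when $v$ can approach $0$. This is precisely what the uniform lower bound $|v^{\pm}|\ge\tfrac{3}{4}$ near $\Gamma$ (coming from Lemma \ref{Klem} applied on each side) guarantees; with this bound the map $v\mapsto v/|v|$ is Lipschitz on the range of $v^{\pm}$ near $\Gamma$ and therefore preserves $H^{1}$. Everything else is a routine assembly of ingredients already established in Section \ref{notdef} and in the proof of Lemma \ref{extl}.
\end{pr}
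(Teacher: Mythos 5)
Your proof proposal follows essentially the same route the paper indicates (the paper's justification is the one-line remark preceding the Claim that one applies Lemma~\ref{extl} both to $\Omega$ and to $\Omega^c$, then glues): construct a one-sided $\mathbb{S}^1$-valued extension on the exterior via Lemma~\ref{extl}, mirror the construction on the interior using the interior uniformization and Lemma~\ref{Klem}, and glue across $\Gamma$. Writing the interior construction out explicitly rather than invoking the lemma for $\Omega^c$ is a cosmetic difference.

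One caveat: you justify the gluing by ``the standard $H^1$-gluing lemma across a rectifiable interface,'' but a quasicircle is in general \emph{not} rectifiable (it can have Hausdorff dimension strictly greater than $1$), so this phrasing is wrong as stated. The gluing still works: since $\Gamma$ has Lebesgue measure zero and the two one-sided pieces have $|u_0|=1$ with $L^2$ gradient on each side and admit the same nontangential boundary values $g$ (equivalently, one can transport the problem to $\mathbb{S}^1$ via the quasiconformal extension of the uniformization, glue there, and transport back), the distributional gradient has no singular part on $\Gamma$. The point is that one must argue this from the measure-zero property and the coincidence of boundary values rather than from rectifiability. The paper's one-line indication leaves this implicit too, so your argument is at the same level of rigor, but the word ``rectifiable'' should be replaced.
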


We can assume that $\Omega''\setminus \Omega= \Omega'\setminus\Omega$. Let $t>0$ small enough such that $\D_t \subset \Omega''$.

We first proof that on $S_t \cap \Omega'$ there is always an arc with angle uniformly bounded from below.

\begin{cl}
\label{clarc}
there exists $\delta>0$ such that $\D_t \cap \Omega'$ contains a connected arc of arc length  greater than $\delta t$, we denote it $A_t'$.
\end{cl}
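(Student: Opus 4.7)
The natural reading of this claim (consistent with the $A_t'$ notation used in what follows) is that we construct a connected sub-arc of $S_t$ of arc length at least $\delta t$ lying in $\Omega$, hence a fortiori in $\overline{\mathbb{D}}_t\cap \Omega'$. The underlying geometric fact is that the quasidisk $\Omega$ satisfies an interior corkscrew condition at each boundary point and every scale. The plan is to first locate a corkscrew point $z_t^*\in \Omega\cap S_t$ at which a disk of radius $\gtrsim t$ is inscribed in $\Omega$, and then extract $A_t'$ as the arc cut out by this disk on $S_t$.

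\textbf{Corkscrew via the quasiconformal extension.} Let $f:\mathbb{D}\to\Omega$ be a uniformization of $\Omega$ and let $F:\mathbb{C}\to\mathbb{C}$ be its $K$-quasiconformal extension provided by Theorem~\ref{Extension}, with $K$ depending only on $\Omega$. Since we assumed $a=0\in\Gamma$, set $w_0:=F^{-1}(0)\in\mathbb{S}^1$. For each sufficiently small $t>0$, continuity of $F$ and $F(w_0)=0$ produce $s=s(t)>0$ (with $s(t)\to 0$ as $t\to 0$) such that the radial point $w_t^*:=(1-s)w_0\in\mathbb{D}$ satisfies $|F(w_t^*)|=t$. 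Set $z_t^*:=F(w_t^*)\in\Omega\cap S_t$. By Theorem-Definition~\ref{KQCLQS}, $F^{-1}$ is $\eta$-quasisymmetric for some modulus $\eta$ depending only on $K$. Applying quasisymmetry to the triple $(z_t^*,F(w),0)$ for arbitrary $w\in\mathbb{S}^1$ yields
\[
\frac{|w_t^*-w|}{|w_t^*-w_0|}\;\leq\;\eta\!\left(\frac{|z_t^*-F(w)|}{|z_t^*|}\right)\;=\;\eta\!\left(\frac{|z_t^*-F(w)|}{t}\right).
\]
As $|w_t^*-w|\geq d(w_t^*,\mathbb{S}^1)=s=|w_t^*-w_0|$, the left-hand side is $\geq 1$ for every $w\in\mathbb{S}^1$. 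Taking the infimum over $w$ then gives $d(z_t^*,\Gamma)\geq c\,t$ with $c:=\eta^{-1}(1)>0$, depending only on $K$.

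\textbf{From corkscrew to arc, and the main obstacle.} Since $\overline{\mathbb{D}}_{ct}(z_t^*)\subset \Omega$ and $z_t^*\in S_t$, the set $A_t':=S_t\cap \overline{\mathbb{D}}_{ct}(z_t^*)$ is a connected arc of $S_t$ containing $z_t^*$ and contained in $\Omega\subset \Omega'$. To estimate its length, observe that any endpoint $p$ of $A_t'$ satisfies $|p|=t=|z_t^*|$ and $|p-z_t^*|=ct$; the law of cosines applied to the isosceles triangle $(0,z_t^*,p)$ gives the central angle at $0$ equal to $\arccos(1-c^2/2)$. Hence $A_t'$ subtends an angle $2\arccos(1-c^2/2)\geq c$ at the origin (elementary for $0<c\leq 2$), and so its arc length is at least $c\,t$. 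Setting $\delta:=c$ completes the proof. The essential technical step is the corkscrew bound: while the identity $d(w_t^*,\mathbb{S}^1)=s$ is trivial on the disk side, its transfer to $d(z_t^*,\Gamma)\geq c\,t$ on the domain side is precisely where the quasidisk hypothesis enters, through the quasisymmetry of $F^{-1}$ anchored at the putative boundary singularity.
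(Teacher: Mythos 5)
Your corkscrew construction via the quasisymmetric inverse of the quasiconformal extension is correct in itself, and it is a genuinely different argument from the one in the paper: the paper works directly on the curve using the Ahlfors cross--ratio characterization of quasicircles (Proposition~\ref{qcp}) to locate points $a_t^\pm$ of $S_t\cap\Gamma$ adjacent to $0$ and then exhibits a uniformly long sub-arc of $S_t$ near $a_t^+$ that avoids $\Gamma$, while you transport the trivial corkscrew condition in $\mathbb{D}$ through $F^{-1}$. Both routes genuinely use the quasidisk hypothesis, and your bound $d(z_t^*,\Gamma)\geq \eta^{-1}(1)\,t$ is rigorous (note $\eta^{-1}(1)\leq 1$ automatically, since $|z_t^*|=t$ and $0\in\Gamma$).

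The difficulty is with the \emph{side} of $\Gamma$ on which you place $A_t'$. You read the claim as asking for a sub-arc of $S_t$ lying in $\Omega$; but in the subsequent estimate the paper bounds
$\bigl|\int_{A_t'} u_*\wedge (u_*)_\theta\,d\theta\bigr|$
by a quantity of the form $C\,\Vert u_0\Vert^2_{H^1(A_t)}$, which tends to $0$ as $t\to 0$ because $u_0$ is a \emph{fixed} $H^1$ map on the annular neighborhood $\Omega''$ of $\Gamma$. This can only hold if, on $A_t'$, the trace of $u_*$ is that of the fixed extension $\hat u$ from Lemma~\ref{extl}, i.e.\ if $A_t'\subset \Omega'\setminus\overline\Omega$. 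If instead $A_t'\subset\Omega$, then on $A_t'$ the map $u_*$ is the actual limiting harmonic map with a degree-one singularity at $0\in\partial\Omega$: writing $u_*\sim e^{i\theta}e^{i\psi}$ near $0$, the winding of $u_*$ over an arc of $S_t$ of length $\geq\delta t$ contributes an angular fraction $\gtrsim\delta/(2\pi)$, which does \emph{not} vanish as $t\to 0$, and the whole contradiction argument collapses. So the claim must be understood as producing an arc of $S_t$ in $\Omega'\setminus\overline\Omega$ (the wording ``$\mathbb{D}_t\cap\Omega'$'' contains and the subsequent use of $u_0$ disambiguate), and your construction puts the arc on the wrong side.

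The good news is that your method adapts with essentially no change: run the identical argument with the \emph{exterior} uniformization $h:\mathbb{D}^c\to\Omega^c$ (which also admits a quasiconformal extension of $\mathbb{C}$ by Theorem~\ref{Extension}; this is precisely the map used in the proof of Lemma~\ref{extl}). The same quasisymmetry computation then yields a point $z_t^{**}\in S_t\cap\Omega^c$ with $d(z_t^{**},\Gamma)\geq c't$, hence a sub-arc $A_t'\subset S_t\cap\Omega^c$ of length $\geq c't$; since $S_t\subset\Omega'$ for small $t$, this arc lies in $\Omega'\setminus\overline\Omega$ as required. With this correction your proof is a valid (and arguably cleaner) alternative to the cross-ratio argument in the paper.
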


\begin{proof}
\begin{figure}[h]
\includegraphics[page=3]{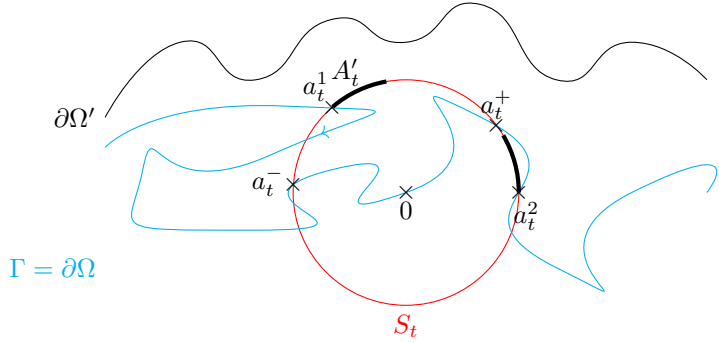}
\caption{Uniform control on $S_t\cap \Omega'$}

\end{figure}

We choose some orientation on $\Gamma$, let $a^-_t$ the last point of $S_t \cap \Gamma$ before $0$ and $a_t^+$ the first point of $S_t\cap \Gamma$ after $0$. We denote by $\Gamma_-$  the open arc of $\Gamma$ from $a^+_t$ to $0$. Thanks to \eqref{eqs}, there is no point of $\Gamma^-$  in $B(a^+_t,\frac{t}{C})\cap \Gamma$. let $A_t^1$ and $A_t^2$ the two open connected component of $S_t\setminus\{ a_t^-,a_t^+\}$, and
let $a_t^{1}$ (resp. $a_t^{2}$) the closest point  to $a_t^+$ of $\Gamma_-\cap A_t^1$ (resp. $\Gamma_-\cap A_t^2$). By the previous remark, the distance of those points to $a_t^+$ is bigger than $\frac{t}{2C}$, Hence there is the two open arc starting at the $a^i_t$ of arc length $\frac{t}{2C}$ which contains no point of $\Gamma$, one of this two is included in $\Omega'$.
\end{proof}

By \eqref{eqdeg} and the fact $0$ is a degree one singularity, we have

\begin{align*}
1=\mathrm{deg}(u_*,0)&=\frac{1}{2\pi} \int_{S_t} u_* \wedge{u_*}_\theta \, d\theta \\
&=\frac{1}{2\pi} \int_{A_t'} u_* \wedge{u_*}_\theta \, d\theta +\frac{1}{2\pi} \int_{\Gamma\setminus A_t'} u_* \wedge {u_*}_\theta \, d\theta .
\end{align*}
Then, we have

$$\frac{1}{2\pi} \left \vert \int_{A_t'} u_* \wedge{u_*}_\theta\, d\theta \right\vert \leq \frac{1}{2\pi} \Vert u_0 \Vert_{H^\frac{1}{2}(S_t)}^2 \leq C \Vert u_0 \Vert_{H^1(A_t)}^2,$$
where $A_t = \D_{2t}\setminus \D_\frac{t}{2}$, and

$$\frac{1}{2\pi} \left\vert \int_{\Gamma\setminus A_t'} u_* \wedge{u_*}_\theta \, d\theta \right\vert \leq
\frac{1}{2\pi} \int_{\Gamma\setminus A_t'} \vert {u_*}_\theta\vert \, d\theta  \leq
\frac{1}{2\pi} \left( \int_{\Gamma\setminus A_t'} \vert {u_*}_\theta\vert^2  \, d\theta \right)^\frac{1}{2} 
\sqrt{(2\pi-\delta)t}
$$

Here we use the fact that the length of $A_t^t$ is at least $\delta t$. Then, we have

$$1- C \Vert \nabla u_0 \Vert^2_{L^2(A_t)} \leq \frac{\sqrt{(2\pi-\delta)t}}{2\pi} \left( \int_{S_t} \vert {u_*}_\theta\vert^2 \, d\theta \right)^\frac{1}{2},$$
hence by squaring this identity
$$\frac{4\pi^2}{(2\pi-\delta)t} (1- 2C \Vert \nabla u_0 \Vert^2_{L^2(A_t)}) \leq \int_{S_t} \vert {u_*}_\theta\vert^2  \, d\theta ,$$

But, thanks to Fubini Theorem, we have

\begin{align*}
\int_{r_1}^{r_2} \frac{\Vert \nabla u_0 \Vert_{L^2(A_t)}^2}{t} \,dt &= \int_{r_1}^{r_2} \frac{1}{t} \int_{A_{\frac{r_1}{2},2r_2}} \vert \nabla u_0 \vert^2 \pmb{1}_{\frac{t}{2}\leq \vert x\vert \leq 2t} \, dx\,dt \\
&= \int_{A_{\frac{r_1}{2},2 r_2}} \vert \nabla u_0 \vert^2   \int_{r_1}^{r_2} \frac{1}{t}  \,dt\, dx\\
&=\ln(4) \int_{A_{\frac{r_1}{2},2r_2}} \vert \nabla u_0 \vert^2 \, dx . 
\end{align*}
Finally we get

$$\frac{4\pi^2}{(2\pi-\delta)t} \ln \left( \frac{r_2}{r_1} \right) - C \Vert \nabla u_0 \Vert_{L^2\left(A_{\frac{r_1}{2},2r_2}\right)} \leq \int_{A_{r_1,r_2}} \vert \nabla u_* \vert^2 \,dx.$$
But thanks to \eqref{eqenergyestimatefaraannulus}, we know that

$$ \int_{A_{r_1,r_2}} \vert \nabla u_* \vert^2 \,dx \leq 2\pi \ln \left( \frac{r_2}{r_1} \right) + C,$$
which leads to a contradiction taking $\frac{r_2}{r_1}$ small enough.
  
\subsection{Final convergence arguments}

\begin{prop}
\label{lastprop} With the notations of Claim \ref{propconvstruwe}, we have that $u_{\eps_{k_l}} \to u_{\star}$ in $H^{1}_{loc}\left(\overline\Omega\setminus \{a\} \right) $ and for any compact set $K\Subset \overline{\Omega}\setminus\{a\}$,
\begin{equation} \label{eqnoquantumoutsidea} \frac{1}{4\eps_{k_l}^2} \int_{K}\left(1-\vert u_{\eps_{k_l}} \vert^2 \right)^2 \to 0 \text{ as } l \to +\infty \end{equation}
\end{prop}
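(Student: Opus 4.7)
The plan is to follow the BBH/Struwe template in three main steps: reducing the singular set to $\{a\}$, extending the strong $H^1_{loc}$ convergence up to $\partial\Omega$ away from $a$, and establishing the potential quantum estimate on compacts of $\overline\Omega\setminus\{a\}$.

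First, I would show $S=\{a\}$ by eliminating the ``ghost'' singularities. By the remark following Claim \ref{propconvstruwe}, each $a_j\in S\setminus\{a\}$ has degree zero for $u_\star$. Choose $\rho>0$ small enough so that $\overline{\mathbb{D}_{2\rho}(a_j)}\cap(S\setminus\{a_j\})=\emptyset$; then $u_{\eps_{k_l}}\to u_\star$ smoothly on $\partial\mathbb{D}_\rho(a_j)$ and $u_\star|_{\partial\mathbb{D}_\rho(a_j)}$ lifts as $e^{i\phi_\star}$ with $\phi_\star$ smooth. As a global minimizer of $E_\eps$, $u_{\eps_{k_l}}$ restricted to $\mathbb{D}_\rho(a_j)$ is itself a minimizer with its own smooth, degree-zero boundary trace. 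By the classical BBH analysis on a smooth disk for degree-zero boundary data (see \cite{BBH2}), such a minimizer satisfies $|u_{\eps_{k_l}}|\geq \tfrac12$ on $\mathbb{D}_{\rho/2}(a_j)$ for all $l$ large. Since $x_j^{\eps_{k_l}}\to a_j$ lies in $\mathbb{D}_{\rho/2}(a_j)$ for $l$ large but $|u_{\eps_{k_l}}(x_j^{\eps_{k_l}})|\leq \tfrac12$, we reach a contradiction. Hence $S=\{a\}$, and Claim \ref{propconvstruwe} gives $\mathcal{C}^\infty_{loc}(\Omega\setminus\{a\})$ convergence, in particular strong $H^1_{loc}(\Omega\setminus\{a\})$ convergence.

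Second, I would extend the $H^1$ convergence to compacts of $\overline\Omega\setminus\{a\}$ that touch $\partial\Omega$, and obtain the quantum estimate along the way. Fix $K\Subset \overline\Omega\setminus\{a\}$ and $\rho>0$ with $K\cap \mathbb{D}_{2\rho}(a)=\emptyset$. Proposition \ref{propenergyeststruwe} gives the uniform bound $\int_K |\nabla u_{\eps_{k_l}}|^2\leq 2\pi|\ln\rho|+C$, so $u_{\eps_{k_l}}\rightharpoonup u_\star$ weakly in $H^1(K)$ and strongly in $L^2(K)$ up to subsequence. Testing the Euler--Lagrange equation against $\chi(u_{\eps_{k_l}}-u_\star)$ with a cutoff $\chi\in \mathcal{C}^\infty(\overline\Omega)$ vanishing in a neighborhood of $a$ and equal to $1$ on $K$, combined with the harmonic map equation $-\Delta u_\star=|\nabla u_\star|^2 u_\star$ in $\Omega\setminus\{a\}$ and the identity
\begin{equation*}
u_{\eps_{k_l}}\cdot(u_{\eps_{k_l}}-u_\star)=-\tfrac12(1-|u_{\eps_{k_l}}|^2)+\tfrac12|u_{\eps_{k_l}}-u_\star|^2,
\end{equation*}
produces an identity in which $\int \chi (1-|u_{\eps_{k_l}}|^2)^2/\eps_{k_l}^2$ and $\int\chi|\nabla(u_{\eps_{k_l}}-u_\star)|^2$ both sit on the left-hand side. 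The remaining terms are $o(1)$ thanks to $\|1-|u_{\eps_{k_l}}|^2\|_{L^2}\leq 2\sqrt{C}\,\eps_{k_l}$ (from \eqref{eqboundedassumption}), the strong $L^2$ convergence of $u_{\eps_{k_l}}$, and the $L^\infty$ convergence $|u_{\eps_{k_l}}|\to 1$ on $K$ (point (vi) of Theorem \ref{theoGL}); the $\partial\Omega$ boundary integral vanishes because $u_{\eps_{k_l}}-u_\star\equiv 0$ there. This simultaneously yields strong $H^1(K)$ convergence and the quantum estimate \eqref{eqnoquantumoutsidea}.

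The main obstacle is controlling the source terms near $\partial\Omega\setminus\{a\}$, where one cannot exploit interior smoothness of $u_{\eps_{k_l}}$. I would work on the pullback via the uniformization $f:\mathbb{D}\to\Omega$: the functions $v_{\eps_{k_l}}=u_{\eps_{k_l}}\circ f$ solve the weighted equation $-\Delta v_{\eps_{k_l}}=|f'|^2\eps_{k_l}^{-2}(1-|v_{\eps_{k_l}}|^2)v_{\eps_{k_l}}$ on the smooth disk, with fixed trace $g\circ f\in H^{1/2}(\mathbb{S}^1)$. Propositions \ref{propfarboundary} and \ref{propBBHgradientbound}, combined with quasiconformal control of $|f'|^2$ on $f^{-1}(K)$, allow one to run the cutoff-testing argument on the pullback; although $|f'|^2$ need not be bounded up to $\mathbb{S}^1$, the integrability afforded by the quasidisk hypothesis together with the $H^{1/2}$ regularity of $g\circ f$ is enough to absorb the weight.
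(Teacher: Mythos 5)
Your proposal takes a genuinely different route from the paper, and it has real gaps on exactly the points where the paper's construction is designed to do the work.

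First, the ordering is reversed. You try to eliminate the ghost singularities $a_j\in S\setminus\{a\}$ \emph{before} proving the quantum estimate, by restricting the minimizer to a small disk $\mathbb{D}_\rho(a_j)$ and invoking the classical BBH result for a disk with degree-zero boundary data. But Claim \ref{propconvstruwe} only gives $a_j\in\overline{\Omega}$: the points of $S\setminus\{a\}$ may lie on $\partial\Omega$, in which case $\mathbb{D}_\rho(a_j)\not\subset\Omega$, the restriction $u_{\eps}|_{\mathbb{D}_\rho(a_j)\cap\Omega}$ is not a BBH minimizer on a smooth disk, and the trace on $\partial\mathbb{D}_\rho(a_j)$ is not enough to control what happens on $\partial\Omega\cap\mathbb{D}_\rho(a_j)$. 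The paper avoids this issue entirely by proving the quantum estimate \eqref{eqnoquantumoutsidea} on compacts of $\overline\Omega\setminus\{a\}$ (boundary points included) and then deducing $S\setminus\{a\}=\emptyset$ as a corollary via the $\eta$-compactness of Proposition \ref{prquantum}.

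Second, the equation-testing argument has an unresolved term. Testing $-\Delta u_\eps = \eps^{-2}(1-|u_\eps|^2)u_\eps$ against $\chi(u_\eps-u_\star)$ and using your identity produces the good negative term $-\tfrac{1}{2\eps^2}\int\chi(1-|u_\eps|^2)^2$, but also the term $\tfrac{1}{2\eps^2}\int\chi(1-|u_\eps|^2)|u_\eps-u_\star|^2$, which is only $O(\eps^{-1})\|u_\eps-u_\star\|_{L^4}^2$ a priori. Making this $o(1)$ requires uniform $L^\infty$ control of $|u_\eps|\to 1$ \emph{up to the boundary}, which is precisely point (vi) of Theorem \ref{theoGL}; using it here is at risk of circularity (and in any case is not established for compacts touching $\partial\Omega$ at this stage). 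You acknowledge the difficulty near $\partial\Omega$ but only gesture at pulling back by $f$ and ``absorbing the weight''; the weight $|f'|^2$ genuinely blows up or degenerates at $\mathbb{S}^1$ for a quasidisk and this needs to be handled quantitatively, not rhetorically.

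The paper's proof sidesteps both problems by a variational comparison rather than integration by parts. It introduces an $\mathbb{S}^1$-valued Dirichlet minimizer $w_\eps$ with boundary data $u_\eps/|u_\eps|$ (made admissible by a homotopy argument across an annular collar of $\partial\Omega$), and a scalar harmonic function $f_\eps$ with boundary value $1$ on $\partial\Omega$ and $|u_\eps|$ on the \emph{interior} circle $S(a,r_0)$. The competitor $h_\eps=f_\eps w_\eps$ then satisfies $E_\eps(u_\eps)\le E_\eps(h_\eps)$, and the error is governed by $\varphi_\eps=(1-f_\eps)/\eps^2$, which is harmonic with zero boundary data on $\partial\Omega$ and smoothly converging data on $S(a,r_0)$; pulling back to the disk and using elliptic regularity shows $\varphi_\eps$ is uniformly bounded in $H^1\cap L^\infty$, hence the error is $O(\eps^2)$. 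Squeezing this against $\frac12\int|\nabla w_\star|^2\le\frac12\int|\nabla u_\star|^2$ and lower semicontinuity delivers both strong $H^1$ convergence and \eqref{eqnoquantumoutsidea} at once. The crucial trick—locating all the nontrivial boundary data for $f_\eps$ on an interior circle—is what you are missing: it converts the bad boundary behaviour into a problem that is trivial on $\partial\Omega$ and classical on $S(a,r_0)$.
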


\begin{proof}
Let $r_0>0$ small enough be such that $\mathbb{D}_{r_0}(a) \cap \left(\partial\Omega \cup S \right)= \emptyset$.
Since $u_{\eps_{k_l}}$ is bounded in $H^1\left( \Omega \setminus \mathbb{D}_{r_0}(a) \right)$, $u_{\eps_{k_l}}$ weakly converges to some function $u_{\star} \in H^1\left( \Omega \setminus \mathbb{D}_{r_0}(a) \right)$. Moreover, $u_{\star}$ is a harmonic map into $\mathbb{S}^1$ and $u_{\star}$ satisfies
\begin{equation}\label{eqenergyustarweaklimit} \int_{\Omega \setminus \mathbb{D}_{r_0}(a)} \left\vert \nabla u_{\star} \right\vert^2 \leq \liminf_{l \to +\infty}  \int_{\Omega \setminus \mathbb{D}_{r_0}(a)} \left\vert \nabla u_{\eps_{k_l}} \right\vert^2. \end{equation}

From now to the end of the proof of the proposition we denote $\eps:= \eps_{k_l}$.

Let $w_{\eps}$ be the solution the following minimization problem
$$ \frac{1}{2}\int_{\Omega \setminus \mathbb{D}_{r_0}(a)} \left\vert \nabla w_{\eps} \right\vert^2 = \inf_{u\in \mathcal{A}_{\eps}} \frac{1}{2}\int_{\Omega \setminus \mathbb{D}_{r_0}(a)} \left\vert \nabla u \right\vert^2$$
on the admissible set
$$ \mathcal{A}_{\eps} = \left\{ u\in H^1\left( \overline{\Omega} \setminus \mathbb{D}_{r_0}(a) , \mathbb{S}^1\right) ; u = \frac{u_{\eps}}{\left\vert u_{\eps} \right\vert} \text{ on } \partial \left(\Omega \setminus \mathbb{D}_{r_0}(a)\right) \right\} .$$

As explained in Claim \ref{extension3}, we can find $u^0_\eps$ is defined a tubular neighborhood $\Omega''$ of $\partial \Omega$ which is in $C^\infty (\Omega''\setminus,\mathbb{S}^1) \cap H^1 (\Omega'')$  and such that $u_\eps^0=u_\eps$ on $\partial \Omega$. Up to reduce $r_0$, we can assume that $\D(a,r_0)\cap \Omega''=\emptyset$. Then we set $\omega =\Omega\setminus \Omega''$. Let $\Psi: [a,b]\times \mathbb{S}^1 \to \overline{\omega} \setminus \mathbb{D}_{r_0}(a)$ be a smooth diffeomorphism such that $\Psi(\{a\}\times \mathbb{S}^1) = \partial\omega$ and $\Psi(\{b\}\times \mathbb{S}^1) = S(a,r_0)$. For instance, $\Psi$ can be build with a uniformization map from an annulus to $\overline{\omega} \setminus \mathbb{D}_{r_0}(a)$.
We set $a_{\eps} = u^0_\eps\circ \Psi_{\vert \{a\} \times \mathbb{S}^1}$ and $b_{\eps} = u_{\eps}\circ \Psi_{\vert \{b\}\times \mathbb{S}^1}$. Since $deg(a_{\eps}) = deg(b_{\eps})$, there is a smooth homotopy $H_{\eps}: [a,b]\times \mathbb{S}^1 \to \mathbb{S}^1$ such that $H_{\eps}(a,.) = a_{\eps}$ and $H_{\eps}(b,.) = b_{\eps}$ and the map 
\begin{equation*}
v_{\eps} := \begin{cases} H_{\eps}\circ \Psi^{-1} \text{ in } \omega\setminus  \mathbb{D}_{r_0}(a) \\ 
u_\eps^0 \text{ in } \Omega \setminus \omega
\end{cases}
\end{equation*}
satisfies $v_{\eps} \in \mathcal{A}_{\eps}$.

It is clear that since $w_{\eps} \to u_{\star} $ in $H^{\frac{1}{2}} \left( \partial \left(\Omega\setminus  \mathbb{D}_{r_0}(a) \right) \right)$ as $l\to +\infty$, $w_{\eps_{k_l}}$ converges in $H^1\left(\Omega\setminus  \mathbb{D}_{r_0}(a) \right)$ to some function $w_{\star}$ such that $w_{\star} = u_{\star}$ on $\partial \left(\Omega\setminus  \mathbb{D}_{r_0}(a) \right)$ and solution of
$$ \frac{1}{2}\int_{\Omega \setminus \mathbb{D}_{r_0}(a)} \left\vert \nabla w_{\star} \right\vert^2 = \inf_{u\in \mathcal{A}_{\star}} \frac{1}{2}\int_{\Omega \setminus \mathbb{D}_{r_0}(a)} \left\vert \nabla u \right\vert^2$$
on the admissible set
$$ \mathcal{A}_{\star} = \{ u\in H^1\left( \overline{\Omega} \setminus \mathbb{D}_{r_0}(a) , \mathbb{S}^1\right) ; u = u_{\star} \text{ on } \partial \left(\Omega \setminus \mathbb{D}_{r_0}(a)\right) \} .$$
In particular $u_{\star}\in \mathcal{A}_{\star}$ and
\begin{equation}\label{eqtestwstarustar} \int_{\Omega \setminus \mathbb{D}_{r_0}(a)} \left\vert \nabla w_{\star} \right\vert^2 \leq \int_{\Omega \setminus \mathbb{D}_{r_0}(a)} \left\vert \nabla u_{\star} \right\vert^2 \end{equation}
We let $f_{\eps}$ be the solution of
\begin{equation*}
\begin{cases}
\Delta f_{\eps} = 0 & \text{ in } \Omega \setminus \mathbb{D}_{r_0}(a) \\
f_{\eps} = 1 & \text{ on } \partial\Omega \\
f_{\eps} = \vert u_{\eps} \vert & \text{ on } S(a,r_0)
\end{cases}
\end{equation*}
Testing the function
\begin{equation*}
h_{\eps}:= \begin{cases} f_{\eps} w_{\eps} \text{ in } \Omega\setminus  \mathbb{D}_{r_0}(a) \\ 
u_{\eps} \text{ in }  \mathbb{D}_{r_0}(a)
\end{cases}
\end{equation*}
for the Ginzburg-Landau energy, we have that
$$ E_{\eps}(u_{\eps}) \leq E_{\eps}(h_{\eps}) .$$
so that since $h_{\eps} = u_{\eps}$ in $ \mathbb{D}_{r_0}(a)$ and  $\vert h_{\eps} \vert = f_{\eps}$ in $\Omega\setminus \mathbb{D}_{r_0}(a)$,
\begin{equation} \label{eqestimatebywf}
\begin{split}
 \frac{1}{2} \int_{\Omega\setminus \mathbb{D}_{r_0}(a)}& \left\vert \nabla u_{\eps} \right\vert^2 + \frac{1}{4\eps^2} \int_{\Omega\setminus \mathbb{D}_{r_0}(a)} \left( 1- \left\vert u_{\eps} \right\vert^2 \right)^2 \\
 & \leq  \frac{1}{2} \int_{\Omega\setminus \mathbb{D}_{r_0}(a)} f_{\eps}^2 \left\vert \nabla w_{\eps} \right\vert^2 +  \frac{1}{2} \int_{\Omega\setminus \mathbb{D}_{r_0}(a)}  \left\vert \nabla f_{\eps} \right\vert^2 + \frac{1}{4\eps^2} \int_{\Omega\setminus \mathbb{D}_{r_0}(a)} \left( 1- f_{\eps}^2 \right)^2 
 \end{split}
 \end{equation}

We set $\varphi_{\eps} = \frac{1-f_{\eps}}{\eps^2}$. Knowing that $f_{\eps}\leq 1$ by the maximum principle, we then easily obtain that 
\begin{equation} \label{eqestimatebyvarphi}
\begin{split}
 \frac{1}{2} \int_{\Omega\setminus \mathbb{D}_{r_0}(a)} & f_{\eps}^2 \left\vert \nabla w_{\eps} \right\vert^2 +  \frac{1}{2} \int_{\Omega\setminus \mathbb{D}_{r_0}(a)}  \left\vert \nabla f_{\eps} \right\vert^2 + \frac{1}{4\eps^2} \int_{\Omega\setminus \mathbb{D}_{r_0}(a)} \left( 1- f_{\eps}^2 \right)^2 \\
  \leq & \frac{1}{2} \int_{\Omega\setminus \mathbb{D}_{r_0}(a)} \left\vert \nabla w_{\eps} \right\vert^2 + \eps^2 \left(   \frac{\eps^2}{2} \int_{\Omega\setminus \mathbb{D}_{r_0}(a)}  \left\vert \nabla \varphi_{\eps} \right\vert^2 + \int_{\Omega\setminus \mathbb{D}_{r_0}(a)}  \varphi_{\eps}^2 \right)
 \end{split}
 \end{equation}

Now, we aim at proving that $\varphi_{\eps} $ is a non negative uniformly bounded function in $H^1 \cap L^{\infty}(\Omega\setminus  \mathbb{D}_{r_0}(a))$. Indeed, $\varphi_{\eps}$ is a harmonic function, uniformly bounded in $\mathcal{C}^\infty(S(a,r_0))$ since we know by Claim \ref{propconvstruwe} that
$$ \frac{ 1-\vert u_{\eps} \vert^2}{\eps^2} \to \left\vert \nabla u_{\star} \right\vert^2 \text{ on } \mathcal{C}^\infty(S(a,r_0)).$$
Setting $\tilde{\varphi}_{\eps} = \varphi_{\eps}\circ f$, where $f:\mathbb{D}\to \Omega$ is a uniformization map, we have that
\begin{equation*}
\begin{cases}
\Delta \tilde{\varphi}_{\eps} = 0 & \text{ in } \D \setminus f^{-1}(\mathbb{D}_{r_0}(a)) \\
\tilde{\varphi}_{\eps} = 0 & \text{ on } \mathbb{S}^1 \\
\tilde{\varphi}_{\eps} = \frac{1-\vert u_{\eps}\circ f \vert }{\eps^2}  & \text{ on } f^{-1}\left(S(a,r_0)\right)
\end{cases}
\end{equation*}
and by standard elliptic regularity, $\tilde{\varphi}_{\eps}$ is uniformly bounded. Coming back to $\varphi_{\eps} = \tilde{\varphi}_{\eps} \circ f^{-1} $, using the conformal invariance of the energy, we obtain the expected result.
 
 Now, using \eqref{eqestimatebywf}, \eqref{eqestimatebyvarphi} and the uniform boundedness of $\varphi_{\eps}$ in $H^1\cap L^{\infty}$, we obtain 
 \begin{equation} \label{eqestimatebyw}
\begin{split}
 \frac{1}{2} \int_{\Omega\setminus \mathbb{D}_{r_0}(a)}& \left\vert \nabla u_{\eps} \right\vert^2 + \frac{1}{4\eps^2} \int_{\Omega\setminus \mathbb{D}_{r_0}(a)} \left( 1- \left\vert u_{\eps} \right\vert^2 \right)^2 \\
 & \leq  \frac{1}{2} \int_{\Omega\setminus \mathbb{D}_{r_0}(a)}  \left\vert \nabla w_{\eps} \right\vert^2 + O(\eps^2)
 \end{split}
 \end{equation}
Passing to the limit on \eqref{eqestimatebyw} and using \eqref{eqtestwstarustar} and \eqref{eqenergyustarweaklimit}, we obtain
\begin{equation*} 
\begin{split}
\frac{1}{2}\int_{\Omega\setminus \mathbb{D}_{r_0}(a)} \left\vert \nabla u_{\star} \right\vert^2 \leq & \liminf_{\eps \to 0}  \int_{\Omega \setminus \mathbb{D}_{r_0}(a)} \left\vert \nabla u_{\eps} \right\vert^2 \\
 \leq & \limsup_{\eps \to 0}\left(  \int_{\Omega \setminus \mathbb{D}_{r_0}(a)} \left\vert \nabla u_{\eps} \right\vert^2 + \frac{1}{4\eps^2} \int_{\Omega \setminus \mathbb{D}_{r_0}(a)} \left( 1- \left\vert u_{\eps} \right\vert^2 \right)^2  \right) \\
  \leq & \frac{1}{2}\int_{\Omega\setminus \mathbb{D}_{r_0}(a)} \left\vert \nabla w_{\star} \right\vert^2 \leq \frac{1}{2}\int_{\Omega\setminus \mathbb{D}_{r_0}(a)} \left\vert \nabla u_{\star} \right\vert^2 
\end{split}
\end{equation*}
so that choosing $r_0$ as small as necessary, we obtain strong $H^1_{loc}\left(\Omega \setminus \{a\}\right)$ convergence of $u_{\eps}$ and \eqref{eqnoquantumoutsidea}.
\end{proof}

We obtain from  Proposition \ref{lastprop} and proposition \ref{prquantum} that $S\setminus \{a\} = \emptyset$, so that the proof of Theorem \ref{theoGL} is complete. 

\section{Minimization of the renormalized energy and the example of tangential boundary data}

\subsection{Renormalized energy of canonical harmonic maps}
Let $\Omega$ be a quasidisk and $\Gamma$ its boundary. In this section, we give an $H^1$ version of the definition of canonical harmonic maps with a prescribed degree $1$ singularity $a \in \Omega$ and of its renormalized energy. The following definition is an $H^1$ version of section I.3 of \cite{BBH}. 
\begin{theodefi}[Canonical harmonic map] \label{canonicalharmonicmap}
Let $a\in \Omega$. Let $g  \in H^{\frac{1}{2}}(\Gamma,\mathbb{S}^1)$ and $deg(g,\Gamma) = 1$. There is a unique map $u : \Omega\setminus \{a\} \to \mathbb{S}^1$ such that
\begin{itemize}
\item $u \in H^1_{loc}(\overline{\Omega}\setminus\{a\})$,
\item $deg(u,a) = 1$,
\item $u = g$ on $\Gamma$,
\item $\star \omega = d\Phi$, where $\omega = \left\langle du, i u \right\rangle$ and $\Phi = \tilde{\Phi} \circ f^{-1}$, where $f:\mathbb{D}\to \Omega$ is a uniformization map such that $f(0)=a$ and $\tilde{\Phi}$ is the unique solution (up to a constant) of
\begin{equation} \label{solutiontildePhi} \begin{cases}
\Delta \tilde{\Phi} = 2\pi \delta_0 & \text{ in } \mathbb{D},\\
\partial_{r} \tilde{\Phi} = \left\langle \partial_{\theta} \left(g\circ f\right), i \left(g\circ f\right) \right\rangle & \text{ on } \mathbb{S}^1.
\end{cases} 
\end{equation}
\end{itemize}
Moreover $u : \Omega\setminus \{a\} \to \mathbb{S}^1$ is a harmonic map. We say that $u$ is the canonical harmonic map of degree $1$ associated to the data $(g,a)$.
\end{theodefi}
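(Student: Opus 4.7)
My plan is to pull the problem back to the unit disk via the uniformization $f : \mathbb{D} \to \Omega$ with $f(0)=a$. Since $\Omega$ is a quasidisk, $f$ extends quasiconformally to $\mathbb{C}$ by Theorem \ref{Extension}, so $\tilde g := g\circ f$ lies in $H^{\frac{1}{2}}(\mathbb{S}^1,\mathbb{S}^1)$ with degree one by Theorem \ref{H12}. Because the Hodge star on $1$-forms and the $H^1$ Dirichlet energy are conformally invariant in dimension two, and quasiconformal maps preserve the class $H^1$, the whole statement reduces to constructing a unique $\tilde u \in H^1_{\mathrm{loc}}(\overline{\mathbb{D}}\setminus\{0\},\mathbb{S}^1)$ with $\tilde u = \tilde g$ on $\mathbb{S}^1$, $\mathrm{deg}(\tilde u,0)=1$ and $\star \tilde \omega = d\tilde\Phi$, where $\tilde\omega = \langle d\tilde u, i\tilde u\rangle$ and $\tilde\Phi$ solves \eqref{solutiontildePhi}. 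One then sets $u := \tilde u \circ f^{-1}$.

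First I would solve the Neumann problem \eqref{solutiontildePhi}. Writing $\tilde g = e^{i(\theta + \phi)}$ with $\phi \in H^{\frac{1}{2}}(\mathbb{S}^1, \mathbb{R})$, a direct computation gives $\langle \partial_\theta \tilde g, i\tilde g\rangle = 1 + \partial_\theta \phi$, so the Neumann datum has integral $2\pi\,\mathrm{deg}(\tilde g) = 2\pi$ on $\mathbb{S}^1$, matching the source $2\pi\delta_0$ by the divergence theorem. Decomposing $\tilde \Phi = \log|z| + v$, the function $v$ solves a harmonic Neumann problem on $\mathbb{D}$ with datum $\partial_\theta \phi \in H^{-\frac{1}{2}}(\mathbb{S}^1)$ of zero mean, hence lies in $H^1(\mathbb{D})$ and is unique up to an additive constant, which is irrelevant since only $d\tilde \Phi$ enters.

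Next I would build $\tilde u$ using the ansatz $\tilde u(z) = \frac{z}{|z|}\,e^{i\psi(z)}$ with $\psi \in H^1(\mathbb{D},\mathbb{R})$, which automatically yields a degree one singularity at the origin. Then $\tilde \omega = d\theta + d\psi$, and the identity $\star d\theta = d\log|z|$ (with the paper's sign convention) together with $d\tilde \Phi = d\log|z| + dv$ reduce the equation $\star \tilde \omega = d\tilde \Phi$ to $\star d\psi = dv$. This is exactly the Cauchy--Riemann system expressing that $v + i\psi$ is holomorphic on $\mathbb{D}$: since $\mathbb{D}$ is simply connected and $v$ is harmonic in $H^1(\mathbb{D})$, such a harmonic conjugate $\psi \in H^1(\mathbb{D})$ exists, unique up to an additive constant. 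The boundary relation $\partial_\theta \psi|_{\mathbb{S}^1} = \partial_r v|_{r=1} = \partial_\theta \phi$, obtained from the Cauchy--Riemann equations restricted to $\mathbb{S}^1$, shows that $\psi|_{\mathbb{S}^1} = \phi$ modulo $2\pi$, and the remaining constant is chosen to realise this equality; then $\tilde u|_{\mathbb{S}^1} = \tilde g$ as required.

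Uniqueness and harmonicity are short. If $u_1, u_2$ are two candidates, the quotient $u_1 \overline{u_2} : \overline\Omega \setminus \{a\} \to \mathbb{S}^1$ lies in $H^1_{\mathrm{loc}}$, has Cartan form $\omega(u_1) - \omega(u_2) = 0$, and has degree $0$ at $a$; writing it locally as $e^{i\varphi}$ with $d\varphi = 0$, it is globally constant on the connected set $\overline\Omega \setminus \{a\}$, and the boundary condition forces this constant to be $1$. Finally, away from $a$, $d\star \omega = d\,d\Phi = 0$, which is exactly the harmonic map equation $\Delta u + |\nabla u|^2 u = 0$ for $\mathbb{S}^1$-valued maps. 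The principal technical difficulty is the low regularity of $g$ and $\Gamma$: all estimates must be conformally invariant and carried out on $\mathbb{D}$ before being transported back by the quasiconformal map $f$, so that the two decisive inputs are the quasiconformal invariance of $H^1$ and the conformal invariance of $H^{\frac{1}{2}}$ given by Theorem \ref{H12}.
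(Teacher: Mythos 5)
Your proof is correct and follows the same overall strategy as the paper: pull everything back to the unit disk by the uniformization $f$ with $f(0)=a$, solve the singular Neumann problem for $\tilde\Phi$, construct the map on $\mathbb{D}$, and push forward. Your uniqueness argument (the quotient $u_1\overline{u_2}$ has vanishing Cartan form, hence is locally constant, hence globally constant on the connected set, and $=1$ by the boundary condition) is precisely the paper's Step~1 in slightly different notation, and your observation that $d\star\omega = d\,d\Phi = 0$ encodes harmonicity is also implicit in the paper. Where you genuinely diverge is in the construction on the disk. The paper solves \eqref{solutiontildePhi} by the Neumann Green representation and then simply cites Chapter I.3 of \cite{BBH} for the existence of the canonical harmonic map $\tilde u$ on $\mathbb D$ associated to $(g\circ f,0)$; you instead make this step self-contained by peeling off the singularity as $\tilde\Phi = \log|z| + v$, taking the ansatz $\tilde u = (z/|z|)e^{i\psi}$ so that $\tilde\omega = d\theta + d\psi$, and reducing $\star\tilde\omega = d\tilde\Phi$ to the Cauchy--Riemann system $\star d\psi = dv$, which is solved by taking $\psi$ to be the harmonic conjugate of $v$. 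The boundary matching $\partial_\theta\psi|_{\mathbb S^1} = \partial_r v|_{r=1} = \partial_\theta\phi$ then pins down the constant. This is a clean explicit version of the BBH construction and makes the degree-one singularity transparent; the paper's route is shorter but opaque. Both hinge on the same two invariances (conformal invariance of the Hodge star and of $H^{\frac{1}{2}}$ under the quasiconformal extension, cf.\ Theorem \ref{H12}). One small point worth being careful about: $H^1_{\mathrm{loc}}(\overline{\Omega}\setminus\{a\})$ rather than just $H^1_{\mathrm{loc}}(\Omega\setminus\{a\})$ is claimed, and transporting membership in this class across $f$ near $\Gamma$ uses the boundedness of the $\mathbb{S}^1$-valued map together with the discussion in Appendix~\ref{appendixexplainH12}, not the bare fact that quasiconformal maps preserve $H^1$; your phrasing is a little loose there, but the fix is exactly the appendix lemma.
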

\begin{proof}

{\sc Step 1} : Uniqueness of $u$.  Let $u_1$ and $u_2$ satisfying all the properties. Let $U$ be a simply connected domain such that $U \Subset \Omega\setminus \{a\}$. Then, we can write $u_1 = e^{i\psi_1}$ and $u_2 = e^{i\psi_2}$ for $\psi_1,\psi_2 \in H^1(U)$. Then we have that 
$$ d\left( \psi_1 - \psi_2 \right) = \left\langle du_1, i u_1 \right\rangle - \left\langle du_2, i u_2 \right\rangle = \star d\Phi - \star d\Phi = 0 $$
Therefore there is a constant $\alpha_U$ such that $\left\vert \alpha_U \right\vert =1$ and $u_2 =\alpha_U u_1$ in $U$. Since $\Omega$ is connected, we get that $\alpha := \alpha_U$ does not depend on $U$ and $u_2 =\alpha u_1$ in $\Omega$. We finally get that $\alpha = 1$ since on $\Gamma$ we have $u_1 = g = u_2$. 

\medskip

{\sc Step 2} : $\tilde{\Phi}$ is well defined by the Green representation formula. Let $\mathcal{G}_x(y)$ the Green function such that 
\begin{equation} \label{solutiongreenneumann} \begin{cases}
\Delta \mathcal{G}_x = 2\pi \delta_x & \text{ in } \mathbb{D}\\
\partial_{r}  \mathcal{G}_x = 0 & \text{ on } \mathbb{S}^1 \\
\int_{\mathbb{S}^1}  \mathcal{G}_x = 0 \text{ on } \mathbb{S}^1
\end{cases} \end{equation}
Then for $x\in \mathbb{D}$, since $g\circ f \in H^{\frac{1}{2}}\left(\mathbb{S}^1\right)$, $\partial_{\theta} \left(g\circ f\right) \in H^{-\frac{1}{2}}$ and $\mathcal{G}_x \in \mathcal{C}^{\infty}$
$$ \tilde{\Phi}(x) = \mathcal{G}_0(x) + \int_{\mathbb{S}^1} \mathcal{G}_x \left\langle \partial_{\theta} \left(g\circ f\right), i \left(g\circ f\right) \right\rangle $$
is a solution of \eqref{solutiontildePhi}.

\medskip

{\sc Step 3} : Construction of $u$. We pullback all the expected properties of $u$ by $f:\mathbb{D}\to \Omega$, a uniformization map such that $f(0)=a$. We set
$$ \tilde{g} = g\circ f \hspace{5mm} \tilde{u} = u\circ f   \hspace{5mm} \tilde{\omega} =  \left\langle d\tilde{u}, i \tilde{u} \right\rangle $$
Then, $u$ is nothing but $u = \tilde{u}\circ f^{-1}$, where $\tilde{u}$ is the canonical harmonic map of degree $1$ on the disk $\mathbb{D}$ associated to the data $(g\circ f , 0)$ which satisfies the desired property thanks to section 1.3 of \cite{BBH}. 

\end{proof}

\begin{defi}[Renormalized energy of canonical harmonic maps]
Let $u :\Omega \to \mathbb{S}^1$ be the canonical harmonic map with data $a \in \Omega$, $g\in H^{\frac{1}{2}}\left(\Gamma,\mathbb{S}^1\right)$: the renormalized energy is well defined by 
$$ W(a) = \lim_{\delta\to 0}\left( \int_{\Omega \setminus \mathbb{D}_{\delta}(a)} \left\vert \nabla u \right\vert^2 - 2\pi \ln \frac{1}{\delta} \right)$$
\end{defi}

\subsection{Energy minimizing Ginzburg-Landau solutions minimize the renormalized energy} \label{secminimizerenormalized}
Let $u:= u_{\star}$ a minimizing solution of the Ginzburg-Landau problem given by Theorem \ref{theoGL}. We set $v = - i u$, so that $v: \Omega\setminus\{a\} \to \mathbb{S}^1$ is a unit vector field such that, for any point $x\in \Omega\setminus\{a\}$, $(v(x),u(x))$ is an direct orthonormal basis. We set $$ \omega = \left\langle du, v \right\rangle = - \left\langle dv, u \right\rangle$$
the Cartan form in $\Omega$. 

Let $G$ be the Dirichlet Green function with respect to $a$ in $\Omega$, i.e satisfying
\begin{equation} \label{eqgreen} \begin{cases}
\Delta G = 2\pi \delta_a &  \text{in} \ \Omega
\\
G = 0 &  \text{on} \ \Gamma \hskip.1cm. \\
\end{cases} \end{equation}

\begin{prop} \label{cldefoftildemu}
There is a harmonic function $\mu$ in $\Omega$, such that
\begin{equation} \label{eqdefmutilde} \star\omega = d\left(\mu + G\right) \hskip.1cm.  \end{equation}
Then $u$ is the canonical harmonic map of degree $1$ with respect to the data $(g,a)$.
\end{prop}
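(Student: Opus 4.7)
The plan is to construct $\mu$ via the Poincar\'e lemma applied to the closed $1$-form $\star\omega - dG$, relying on a cancellation of singularities at $a$. Since $u\in C^\infty(\Omega\setminus\{a\},\mathbb{S}^1)$ is a harmonic map, $d\star\omega = 0$ holds classically on $\Omega\setminus\{a\}$, so $\star\omega$ is a closed $1$-form there. Since $u$ has an isolated degree-$1$ singularity at $a$, standard regularity for harmonic $\mathbb{S}^1$-valued maps with a topological singularity yields the local factorization $u = e^{i(\theta+\psi)}$ in a punctured neighborhood of $a$, with $\theta$ the polar angle around $a$ and $\psi$ smooth. A direct computation gives $\omega = -d(\theta+\psi)$, hence $\star\omega = d\ln r - \star d\psi$ near $a$. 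On the other hand $G = \ln|x-a| + H$ with $H$ smooth harmonic on $\Omega$, so $dG = d\ln r + dH$; subtracting, the singular pieces cancel and $\star\omega - dG = -\star d\psi - dH$ extends smoothly across $a$.

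Since $\Omega$ is simply connected and $\star\omega - dG$ is now a smooth closed $1$-form on all of $\Omega$, the Poincar\'e lemma provides a smooth function $\mu$ on $\Omega$ with $d\mu = \star\omega - dG$, unique up to an additive constant; this proves the identity $\star\omega = d(\mu + G)$. Harmonicity of $\mu$ is then routine: on $\Omega\setminus\{a\}$ one has $d\star d\mu = -d\omega - d\star dG = 0$, because $|u|=1$ forces $du_1\wedge du_2 = 0$ (whence $d\omega = 0$) and $G$ is harmonic off $a$; by continuity of $\Delta\mu$ on $\Omega$, this extends to the whole domain.

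For the identification of $u$ with the canonical harmonic map of Theorem-Definition \ref{canonicalharmonicmap}, properties (i)--(iii) are supplied directly by Theorem \ref{theoGL}. To verify (iv), set $\Phi := \mu + G$ and $\tilde\Phi := \Phi\circ f$. By the conformal invariance of the Dirichlet Green function, $G\circ f$ equals the Dirichlet Green function of $\mathbb{D}$ at $0$ (using $f(0)=a$), which is $\ln|z|$; and $\mu\circ f$ is harmonic on $\mathbb{D}$ as the pullback of a harmonic function by a conformal map. Hence $\Delta\tilde\Phi = 2\pi\delta_0$ in $\mathbb{D}$. For the Neumann condition, pulling back $d\Phi = \star\omega$ through $f$ and using the conformal invariance of the Hodge star on $1$-forms in dimension two gives $d\tilde\Phi = \star\tilde\omega$ on $\mathbb{D}\setminus\{0\}$. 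Writing $g\circ f = e^{i\alpha(\theta)}$ on $\mathbb{S}^1$, the elementary computations $\tilde\omega(\partial_\theta)|_{\mathbb{S}^1} = \langle\partial_\theta\tilde u, -i\tilde u\rangle = -\alpha'$ and $(\star d\tilde\Phi)(\partial_\theta)|_{r=1} = \partial_r\tilde\Phi|_{\mathbb{S}^1}$ yield $\partial_r\tilde\Phi = \alpha' = \langle\partial_\theta(g\circ f), i(g\circ f)\rangle$ on $\mathbb{S}^1$, which is precisely the Neumann condition in \eqref{solutiontildePhi}.

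The main technical point is the local factorization $u = e^{i(\theta+\psi)}$ near $a$ with $\psi$ smooth, which is what makes the singular pieces cancel. A priori we only know that $u\in C^\infty(\Omega\setminus\{a\})\cap H^1_{\mathrm{loc}}(\overline\Omega\setminus\{a\})$ is a harmonic map with $\deg(u,a)=1$. The cleanest way to obtain the factorization is to pull back to the disk via $f$: $\tilde u = u\circ f$ is then a harmonic $\mathbb{S}^1$-valued map on $\mathbb{D}\setminus\{0\}$ with a degree-$1$ vortex at $0$, to which the classical removable-singularity analysis applies, since the lift $\tilde\phi - \theta$ is single-valued and harmonic on the punctured disk and extends harmonically to $0$.
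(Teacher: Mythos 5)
Your overall outline matches the paper's strategy, but there is a genuine gap at the point you yourself flag as the main technical one: the claim that the local lift $\psi:=\tilde\phi-\theta$ near the singularity ``extends harmonically to $0$'' is not automatic, and it is exactly the step that makes the whole argument non-trivial.

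A single-valued harmonic function on a punctured disk need not be removable: $\psi$ could a priori contain a term $\beta\ln r$ for some $\beta\in\R$, so $\star\omega-dG$ could still carry a residual singular part $\beta\,\star dG$. The local factorization that you invoke as ``standard removable-singularity analysis'' only gives that $\psi$ is single-valued harmonic on the punctured disk; ruling out the log piece requires an additional a~priori bound. This is precisely why the paper does not argue pointwise near $a$, but instead works distributionally on all of $\Omega$: it shows $d(\star\omega)=\beta\delta_a$ for some unknown $\beta$, writes $\star\omega-dG-\beta\star dG = d\tilde\mu$ with $\tilde\mu$ harmonic, and then proves $\beta=0$ by plugging into the energy estimate \eqref{eqenergyestimatefara}: since $\nabla G\perp\nabla^\perp G$, the energy of $\nabla G+\beta\nabla^\perp G+\nabla\mu$ on $\Omega\setminus B_\delta(a)$ grows like $2\pi(1+\beta^2)\ln(1/\delta)$, and comparison with $2\pi\ln(1/\delta)+C$ forces $\beta=0$. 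That inequality is where the fact that $u=u_\star$ came from the Ginzburg-Landau minimization (Claim \ref{propconvstruwe}) actually enters. Without it, your removability claim is unsupported, and the Poincar\'e-lemma step that follows does not go through.

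The rest of your proposal is essentially the same as the paper's: the identity $d\omega=2\pi\delta_a$ via the degree, closedness of $\star\omega$ away from $a$, harmonicity of $\mu$, and the verification of the Neumann condition for $\tilde\Phi$ via conformal pullback are all parallel to what the paper does. If you insert the energy argument to kill the $\beta\ln r$ term before applying the Poincar\'e lemma, your streamlined ``cancellation of singularities'' presentation becomes correct and is arguably a cleaner way to say the same thing.
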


\begin{proof}
We know that $du \in L^p(\Omega)$ for any $1<p<2$ (see e.g. \cite{Struwe94}, page 1620), so that $\omega\in L^p(\Omega)$. 
We have that
$$  \star d\omega = \star \left(\left\langle du \wedge dv \right\rangle\right) = \left\langle \nabla^{\perp}u,\nabla v \right\rangle = 0 $$
in $\Omega\setminus\{a\}$ since $\left(v,u\right) = i \left(-u,v\right)$. Then the support of the distribution $\star d\omega$ is $\{a\}$.
Since $\omega \in L^p$, we can write
\begin{equation} \label{eqdistorder1} d\omega = \alpha \delta_a  \hskip.1cm. \end{equation}
Let's compute 
$\alpha$.
\begin{equation} \label{eqcomputealpha} \alpha = \int_{\Omega}  d\omega = \int_{\partial\Omega} \omega = \int_{\mathbb{S}^1} \tilde{\omega} = \int_{\partial\Omega} \left\langle \tilde{u}_{\theta},\tilde{v} \right\rangle = 2\pi deg(g) = 2\pi \hskip.1cm. \end{equation}
We then have that
\begin{equation} \label{eqcloseform}
d\left(\omega - \star dG \right) = 0
\end{equation}

Moreover, we prove that there is $\beta\in \mathbb{R}$ such that
\begin{equation} \label{eqstaromegaisclosed} d\left( \star\omega \right) = \beta \delta_a \end{equation} 
in the sense of distributions in $\Omega$. Indeed, again, since $\omega\in L^p(\Omega)$, we just have to prove that the support of the distribution $d\left( \star\omega \right)$ is $\{a\}$.
Indeed, we first have that
$$  \star d \left(\star \omega \right)= \left\langle \Delta u,v \right\rangle + \star \left( \left\langle \star du \wedge dv \right\rangle \right) = \left\langle \Delta u,v \right\rangle + \left\langle  \nabla u,  \nabla v \right\rangle $$
in $\Omega\setminus\{a\}$. Since $u:\Omega\setminus\{a\} \to \mathbb{S}^1$ is a harmonic map we have that 
$$ \left\langle \Delta u,v \right\rangle = 0 $$
in $\Omega\setminus\{a\}$. Moreover, we have that
\begin{equation}\label{eqdvecedvecf} 0 
= \Delta \left( \left\langle u,v \right\rangle \right) 
= \left\langle \Delta u,v \right\rangle + \left\langle u,\Delta v \right\rangle + 2\left\langle \nabla u,\nabla v \right\rangle 
= 2 \left\langle \nabla u,\nabla v \right\rangle \text{ in } \Omega\setminus\{a\} \end{equation}
since $\left\langle u,v \right\rangle = 0$ and $\left(v,u\right) = i \left(-u,v\right)$. We obtain \eqref{eqstaromegaisclosed}.
Then we have that
\begin{equation} \label{eqstaromegaisclosed2} d\left( \star\omega - dG \right) = \beta \delta_a \end{equation} 
which gives that $h = \star\omega - dG - \beta\star dG$ satisfies
\begin{equation} \label{eqstaromegaisclosed3} 
dh = 0 \hbox{ and } d\left(\star h\right) = 0 \hskip.1cm.
 \end{equation} 
Then $h$ is a smooth harmonic form in $\Omega$. There is a smooth function $\tilde{\mu}$ such that $h= d\tilde{\mu}$. To complete the proof of Claim \ref{cldefoftildemu}, we finally prove that $\beta=0$.
From \ref{eqenergyestimatefara} we know that there is a constant $C>0$ such that for $\delta < \frac{dist(a,\Gamma)}{2}$,
$$ \int_{\Omega \setminus B_{a}(\delta)} \left\vert \nabla u \right\vert^2 \leq 2\pi \ln\left(\frac{1}{\delta}\right) + C \hskip.1cm.$$
Knowing that $\left\vert \nabla u\right\vert^2 = \left\vert \left\langle \nabla u, v\right\rangle \right\vert^2 = \left\vert \omega \right\vert^2$, we have
\begin{equation} \label{ineqenergyminimizingGL} 2\pi \ln\left(\frac{1}{\delta}\right) + C \geq \int_{\Omega \setminus B_{a}(\delta)}\left\vert \omega \right\vert^2 = \int_{\Omega \setminus B_{a}(\delta)}\left\vert \nabla G + \beta\nabla^{\perp}G + \nabla \mu \right\vert^2 \hskip.1cm. \end{equation}
Since $\mu$ is smooth on $\Omega$ and $\nabla G$ is bounded in $L^{p}$ for any $p<2$, we have a constant $C'$ independent from $\delta < \frac{d(a,\Gamma)}{2}$ such that
$$\left\vert \int_{\Omega \setminus B_{a}(\delta)} 2\left\langle  \nabla G + \beta\nabla^{\perp}G ,\nabla\mu \right\rangle +\left\vert \nabla \mu \right\vert^2 \right\vert \leq C' $$
Then
$$ \int_{\Omega \setminus B_{a}(\delta)}\left\vert \nabla G + \beta\nabla^{\perp}G  \right\vert^2 =  \left(1+ \beta^2\right) \int_{\Omega \setminus B_{a}(\delta)}\left\vert \nabla G \right\vert^2 + 2\beta \int_{\Omega \setminus B_{a}(\delta)}\left\langle \nabla G,\nabla^{\perp} G \right\rangle$$
Since $\left\langle \nabla G,\nabla^{\perp} G \right\rangle = 0$, we then can write \eqref{ineqenergyminimizingGL} as
\begin{equation} \label{ineqenergyminimizingGL2} 2\pi \ln\left(\frac{1}{\delta}\right) + C \geq \left(1+\beta^2\right) \int_{\Omega \setminus B_{a}(\delta)}\left\vert \nabla G \right\vert^2 - C' \geq 2\pi \left(1+\beta^2\right)\ln\left(\frac{1}{\delta}\right) - C'' \hskip.1cm, \end{equation}
for a constant $C''$ independent from $\delta$, since $G(x) = \ln\left\vert x-a \right\vert + w(x) $ on $\Omega$, where $w$ is a harmonic function. Dividing by $\ln\left(\frac{1}{\delta}\right)$ and letting $\delta \to 0$ gives $\beta = 0$. 

Now it is easy to check that $u$ is the canonical harmonic map of degree $1$ with respect to the data $(g,a)$ and the proof of Claim \ref{cldefoftildemu} is complete.
\end{proof}

Finally, we also have, in our setting, the fact that the singularity of the minimizer minimizes the renormalized energy, the proof is exactly the same as the one of Theorem VIII.1 \cite{BBH}.

\begin{prop} \label{clminimizerrenormalizedenergy} 
The singularity $a$ of the minimizing Ginzburg-Landau solution $u$ is a minimizer of the renormalized energy $W(a)$.
\end{prop}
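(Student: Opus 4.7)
My plan is to follow the classical BBH strategy (Theorem VIII.1 of \cite{BBH}) and show a matching pair of expansions: an upper bound on $E_\eps(u_\eps)$ in terms of $W(b)$ for any competitor singularity $b\in\Omega$, and a lower bound in terms of $W(a)$ for the actual limit singularity $a$. Subtracting the common divergent term $\pi|\ln\eps|$ and a universal constant $\gamma$ then yields $W(a)\le W(b)$, which is the claim.

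For the upper bound, I fix an arbitrary $b\in\Omega$ and let $u_b:\Omega\setminus\{b\}\to\mathbb{S}^1$ be the canonical harmonic map of Theorem-Definition \ref{canonicalharmonicmap} with data $(g,b)$. Since $u_b$ is smooth near $b$ and behaves like $\frac{z-b}{|z-b|}$ up to a smooth unit factor, I construct a test competitor $v_\eps$ by keeping $v_\eps=u_b$ outside $\mathbb{D}_\eps(b)$ and interpolating inside with the radial BBH profile $f_\eps(|z-b|)\frac{z-b}{|z-b|}$, where $f_\eps$ is the one-dimensional Ginzburg-Landau vortex minimizer rescaled to $\eps$. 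By the standard computation (identical to Lemma VIII.1 of \cite{BBH}, which is purely local around $b$ and therefore insensitive to the regularity of $\partial\Omega$ or $g$), one gets
\begin{equation*}
E_\eps(v_\eps)=\pi|\ln\eps|+W(b)+\gamma+o(1)\quad\text{as }\eps\to 0,
\end{equation*}
where $\gamma$ is the universal vortex constant. Since $u_\eps$ is a minimizer in $\mathcal{A}$, we conclude $E_\eps(u_\eps)\le\pi|\ln\eps|+W(b)+\gamma+o(1)$.

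For the lower bound, I exploit the strong $H^1_{loc}(\overline{\Omega}\setminus\{a\})$ convergence $u_{\eps_{k_l}}\to u_\star$ and the vanishing of the potential away from $a$ (Proposition \ref{lastprop}, identity \eqref{eqnoquantumoutsidea}). Fix $\delta>0$ small so that $\overline{\mathbb{D}_\delta(a)}\subset\Omega$; then
\begin{equation*}
\frac{1}{2}\int_{\Omega\setminus\mathbb{D}_\delta(a)}|\nabla u_\eps|^2+\frac{1}{4\eps^2}\int_{\Omega\setminus\mathbb{D}_\delta(a)}(1-|u_\eps|^2)^2\;\longrightarrow\;\frac{1}{2}\int_{\Omega\setminus\mathbb{D}_\delta(a)}|\nabla u_\star|^2,
\end{equation*}
and by definition of $W(a)$ the right-hand side equals $\pi\ln\frac{1}{\delta}+W(a)+o_\delta(1)$. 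On the small disk $\mathbb{D}_\delta(a)$, where $u_\eps$ has degree one on each sufficiently large circle (an interior regime, identical to the smooth BBH case), the classical lower bound of Theorem V.3 / Theorem VIII.1 of \cite{BBH} gives
\begin{equation*}
\frac{1}{2}\int_{\mathbb{D}_\delta(a)}|\nabla u_\eps|^2+\frac{1}{4\eps^2}\int_{\mathbb{D}_\delta(a)}(1-|u_\eps|^2)^2\;\ge\;\pi\ln\frac{\delta}{\eps}+\gamma+o(1).
\end{equation*}
Summing and letting $\delta\to 0$ after $\eps\to 0$ gives $E_\eps(u_\eps)\ge\pi|\ln\eps|+W(a)+\gamma+o(1)$.

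Combining the two bounds, $W(a)\le W(b)$ for every $b\in\Omega$, which is the desired minimization property. The one point requiring some care in our low-regularity framework is the upper bound: the modification from $u_b$ to $v_\eps$ is purely local around $b\in\Omega$ (hence away from $\partial\Omega$) so the boundary data $g$ is never altered and the quasicircle/$H^{1/2}$ assumptions play no role in the construction; the finiteness of $W(b)$ for each interior $b$ follows from Theorem-Definition \ref{canonicalharmonicmap}. Thus the only genuine issue—the BBH-type lower bound near the vortex—is an interior statement and is inherited verbatim from \cite{BBH}, which is why the proof proceeds exactly as in Theorem VIII.1 there.
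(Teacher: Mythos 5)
Your proposal is correct and takes essentially the same route as the paper, which in fact proves this proposition in one line by deferring entirely to Theorem VIII.1 of \cite{BBH}; your upper-bound test function around an arbitrary $b$ and your far/near energy decomposition around $a$ (with Proposition~\ref{lastprop} supplying the far-field strong $H^1$ convergence) are precisely the ingredients of that argument, and your remark that both the test-function surgery and the BBH vortex lower bound are interior and hence unaffected by the low boundary regularity is the key justification for why the BBH proof transfers verbatim. One small normalization slip: with the paper's convention $W(a)=\lim_{\delta\to 0}\bigl(\int_{\Omega\setminus\mathbb{D}_\delta(a)}|\nabla u_\star|^2-2\pi\ln\tfrac{1}{\delta}\bigr)$, the far-field Dirichlet energy $\tfrac{1}{2}\int_{\Omega\setminus\mathbb{D}_\delta(a)}|\nabla u_\star|^2$ is $\pi\ln\tfrac{1}{\delta}+\tfrac{1}{2}W(a)+o_\delta(1)$ rather than $\pi\ln\tfrac{1}{\delta}+W(a)+o_\delta(1)$, but since the same factor appears in your upper bound for $W(b)$, the inequality $W(a)\le W(b)$ is unaffected.
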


\subsection{The special case of tangential boundary data} \label{linkrenormalizeduniformization}
In this section we prove the Proposition \ref{propWeylPeterssonframeenergy}, when the boundary data $g=\tau$, where $\tau$ is the tangent vector field on $\Gamma$ such that the frame $(\nu, \tau)$ is a direct frame, where $\nu$ is the out-pointing normal of $\Omega$. We have $\tau = -i \nu$.

In the following proposition \ref{propcanonicaluniformization} we aim at proving that canonical harmonic maps with respect to tangential and normal data are exactly push forwards in $\Omega$ of the polar orthonormal frame of the disk (with singularity at $0$) by uniformization maps.

Let $f : \mathbb{D} \to \Omega$ be a uniformization map of $\Omega$. We set $u : \Omega \to \mathbb{S}^1$
$$ u = \frac{f_{\theta}}{\left\vert f_{\theta} \right\vert} \circ f^{-1} $$
and $ v: \Omega \to \mathbb{S}^1$
$$ v = \frac{f_{r}}{\left\vert f_{r} \right\vert} \circ f^{-1} $$
where $f_r$ and $f_{\theta}$ are the radial and angular derivatives of $f$ defined by
$$f_r := \partial_r f =  \frac{x f_x + y f_y}{\sqrt{x^2+y^2}} = \frac{x f_x + y f_y}{r}$$
where $r = \sqrt{x^2+y^2}$ is the radial coordinate and
$$f_{\theta} := \partial_{\theta} f = -y f_x + x f_y$$
where $f_x := \partial_x f$ and $f_y := \partial_y f$ are the partial derivatives with respect to the coordinates of $z = x+i y$.

\begin{prop} \label{propcanonicaluniformization}
Let $a\in \Omega$. For any uniformization map $f : \mathbb{D} \to \Omega$ such that $f(0) = a$, $u = \frac{f_{\theta}}{\left\vert f_{\theta} \right\vert} \circ f^{-1}$ and $v = \frac{f_{r}}{\left\vert f_{r} \right\vert} \circ f^{-1}$ are the canonical harmonic maps of degree $1$ associated to the data $(\tau, a)$ and $(\nu, a)$. 
\end{prop}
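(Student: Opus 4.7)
The plan is to pull everything back to the disk $\mathbb{D}$ via the uniformization $f$, where explicit polar computations become available, and then invoke the uniqueness part of Theorem-Definition \ref{canonicalharmonicmap}. Since the canonical harmonic map on $\Omega$ is \emph{defined} through the pullback equation on $\mathbb{D}$, it suffices to show that $\tilde{u}:=u\circ f$ is the canonical harmonic map on $\mathbb{D}$ with data $(\tau\circ f, 0)$ (and similarly for $\tilde{v}$). In polar coordinates $z=re^{i\theta}$ the chain rule gives $f_\theta(z)=izf'(z)$ and $f_r(z)=e^{i\theta}f'(z)$, so $|f_\theta|=r|f'|$, $|f_r|=|f'|$, and one obtains the explicit formulas
\begin{equation*}
\tilde{u}=ie^{i\theta}\frac{f'}{|f'|},\qquad \tilde{v}=e^{i\theta}\frac{f'}{|f'|}=-i\tilde{u}.
\end{equation*}
Thus $\tilde{v}$ differs from $\tilde{u}$ by a constant rotation; I treat $\tilde{u}$ and deduce the statement for $\tilde{v}$ at the end.

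The regularity, boundary, and degree conditions are direct. Since $f$ is biholomorphic on $\mathbb{D}$, $f'$ is nonvanishing and holomorphic, so $f'/|f'|$ is smooth on $\mathbb{D}$, and $\tilde u\in C^\infty(\overline{\mathbb{D}}\setminus\{0\})\cap H^1_{loc}(\overline{\mathbb{D}}\setminus\{0\})$ once combined with Theorem \ref{H12} and Definition \ref{H12Q}. On $\mathbb{S}^1$, the derivative of the parametrization $\theta\mapsto f(e^{i\theta})$ of $\Gamma$ equals $ie^{i\theta}f'(e^{i\theta})=f_\theta|_{\mathbb{S}^1}$, so $\tilde{u}|_{\mathbb{S}^1}=\tau\circ f$ in the $H^{\frac{1}{2}}$ sense. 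The degree at $0$ equals $1$ because a continuous local branch of $\arg f'$ exists on $\mathbb{D}$ (since $f'$ is nonzero), giving
\begin{equation*}
\arg\tilde{u}(re^{i\theta})=\theta+\tfrac{\pi}{2}+\arg f'(re^{i\theta})
\end{equation*}
which increases by exactly $2\pi$ under $\theta\mapsto\theta+2\pi$.

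The heart of the proof is the Cartan form identity. The function $f_\theta(z)=izf'(z)$ is holomorphic and nonvanishing on $\mathbb{D}\setminus\{0\}$, so $\log f_\theta=\log|f_\theta|+i\arg f_\theta$ is a (possibly multivalued) holomorphic function, and its real and imaginary parts are harmonic conjugates. Because $\arg\tilde{u}=\arg f_\theta$ modulo a constant, pulling back the Cartan form gives $f^\ast\omega=\langle d\tilde{u},i\tilde{u}\rangle=d\arg f_\theta$, and by harmonic conjugacy
\begin{equation*}
\star\bigl(f^\ast\omega\bigr)=d\log|f_\theta|=d\bigl(\log|z|+\log|f'(z)|\bigr).
\end{equation*}
Since $\star$ commutes with pullback by conformal maps on $1$-forms in dimension $2$, this yields $\star\omega=d\Phi$ with $\Phi=\tilde\Phi\circ f^{-1}$ and $\tilde\Phi=\log|f_\theta|$.

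It remains to verify that this $\tilde\Phi=\log|z|+\log|f'(z)|$ solves \eqref{solutiontildePhi}. The interior equation is immediate: $\Delta\log|z|=2\pi\delta_0$ and $\log|f'|$ is harmonic on $\mathbb{D}$ because $\log f'$ is a holomorphic branch. For the Neumann condition, the polar form of the Cauchy--Riemann equations applied to $\log f'$ yields $\partial_r\log|f'|\bigm|_{r=1}=\partial_\theta\arg f'\bigm|_{r=1}$, hence
\begin{equation*}
\partial_r\tilde\Phi\bigm|_{\mathbb{S}^1}=1+\partial_\theta\arg f'=\partial_\theta\bigl(\theta+\tfrac{\pi}{2}+\arg f'\bigr)=\partial_\theta\arg\tilde{u}=\langle\partial_\theta\tilde{u},i\tilde{u}\rangle\bigm|_{\mathbb{S}^1},
\end{equation*}
which is the prescribed Neumann datum since $\tilde{u}|_{\mathbb{S}^1}=\tau\circ f$. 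Uniqueness in Theorem-Definition \ref{canonicalharmonicmap} then identifies $\tilde{u}$, and hence $u=\tilde{u}\circ f^{-1}$, as the canonical harmonic map with data $(\tau,a)$. The statement for $v$ follows line by line with $\tilde{v}=-i\tilde{u}$ and boundary data $\nu\circ f$; the singularity structure at $0$ and the function $\tilde\Phi=\log|f_\theta|=\log|f_r|+\log|z|$ are unchanged up to a constant. The only genuine subtlety is to make sense of the boundary identity $\tilde u|_{\mathbb{S}^1}=\tau\circ f$ at merely $H^{\frac{1}{2}}$ regularity (Weil--Petersson case), which is exactly guaranteed by Theorem~\ref{H12}.
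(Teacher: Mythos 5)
Your proof is correct, and it reaches the same conclusion as the paper but by a genuinely different route for the key identity. The paper verifies the Cartan-form relation $\star\tilde\omega = d(\tilde\mu + \tilde G)$ on $\mathbb{D}$ by an explicit, term-by-term expansion of $\langle \tilde{u}_x,\tilde{v}\rangle$ and $\langle \tilde{u}_y,\tilde{v}\rangle$ in terms of $f_x,f_y,f_{xx},\dots$, invoking harmonicity of $f$ to cancel terms. You instead exploit the underlying complex-analytic structure: $\tilde{u}$ is a unimodular multiple of the nonvanishing holomorphic $f_\theta = izf'$, hence $\arg f_\theta$ and $\log|f_\theta|$ are harmonic conjugates by the Cauchy--Riemann equations, which is precisely $\star(f^\ast\omega) = d\log|f_\theta|$; then the interior equation and the Neumann datum for $\tilde\Phi=\log|f_\theta|$ are one-line consequences of polar Cauchy--Riemann and $\Delta\log|z|=2\pi\delta_0$. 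Your route is shorter and makes the reason the identity holds more visible (holomorphy of $f_\theta$), while the paper's brute-force computation does not require choosing branches of the argument. Both transfer back to $\Omega$ the same way, via conformal invariance of $\star$ on $1$-forms in dimension $2$ and uniqueness in Theorem-Definition~\ref{canonicalharmonicmap}. One small point you could make more explicit: to place $u$ in $H^1_{loc}(\overline{\Omega}\setminus\{a\})$ (equivalently $\omega\in L^2_{loc}(\overline{\Omega}\setminus\{a\})$) you need $\nabla\log|f'|\in L^2(\mathbb{D})$, which is precisely the Weil--Petersson condition on $\Gamma$ rather than a consequence of Theorem~\ref{H12} alone; the paper records this at the end of its proof.
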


\begin{proof}

Let $f : \mathbb{D} \to \Omega$ be a uniformization map such that $f(0) = a$. Notice that since $f$ is a conformal map, $(v,u)$ is an orthonormal frame in $\Omega$ and notice also that on $\Gamma$, $u = \tau$ and $v = \nu$. We now set 
$$ \tilde{\omega} = \left\langle d\tilde{u},\tilde{v} \right\rangle  = - \left\langle d\tilde{v},\tilde{u} \right\rangle $$
the Cartan form of $(\tilde{v},\tilde{u})$ where
$$  \tilde{v} = \frac{f_{r}}{\left\vert f_{r} \right\vert} \text{ and } \tilde{u} = \frac{f_{\theta}}{\left\vert f_{\theta} \right\vert} $$
and
$$ \tilde{\mu} = \ln  \left\vert f_x \right\vert = \ln \left\vert f_y \right\vert = \ln \left\vert f_r \right\vert = \ln \left( \frac{\left\vert f_\theta \right\vert}{r} \right)$$
so that
$$ \begin{cases}
f_r = e^{\tilde{\mu}} \tilde{v} \\
f_{\theta} = e^{\tilde{\mu} + \tilde{G}} \tilde{u}
\end{cases} $$
and
$$ \begin{cases}
\tilde{v} = \left(x f_x + y f_y\right) e^{-\tilde{\mu}-\tilde{G}}  \\
\tilde{u} = \left(-y f_x + x f_y\right) e^{-\tilde{\mu} -\tilde{G}} 
\end{cases} $$
where $\tilde{G}(x,y) = \ln r$ is the Green function of $\mathbb{D}$ with Dirichlet boundary condition with respect to the singularity $0$. 

Let's prove the following equation:
\begin{equation} \label{eqstaromega} \star \tilde{\omega} = d\left( \tilde{\mu} + \tilde{G} \right) \end{equation}
in $\mathbb{D}$. We have that
\begin{equation*} \begin{split}\left\langle\tilde{u}_x , \tilde{v} \right\rangle 
& = \left\langle -y f_{xx} + f_y + x f_{xy} , x f_x + y f_y \right\rangle e^{-2\tilde{\mu} - 2\tilde{G}} \\
& = \left(-xy f_{xx}.f_x - y^2 f_{xx}.f_y + x f_y.f_x + y \left\vert f_y \right\vert^2 + x^2 f_{xy}.f_x + xy f_{xy}.f_y\right) e^{-2\tilde{\mu} - 2\tilde{G}} \\
& = \left(-\frac{xy}{2} \left( \left\vert f_x \right\vert^2 \right)_x + \frac{y^2}{2} \left( \left\vert f_y \right\vert^2 \right)_y + y \left\vert f_y \right\vert^2 + \frac{x^2}{2} \left( \left\vert f_x \right\vert^2 \right)_y + \frac{xy}{2} \left( \left\vert f_y \right\vert^2 \right)_x\right)e^{-2\mu - 2\tilde{G}} \\
& = \left( -xy \tilde{\mu}_x + y^2 \tilde{\mu}_y + y + x^2 \tilde{\mu}_y + xy \tilde{\mu}_x \right) e^{-2\tilde{G}} \\
& = \tilde{\mu}_y + \tilde{G}_y
  \end{split}
  \end{equation*}
where we used that $f_{xx} = -f_{yy}$ since $f$ is harmonic $\Delta f = 0$.
We also have that
\begin{equation*} \begin{split}\left\langle\tilde{u}_y , \tilde{v} \right\rangle 
& = \left\langle -f_x - y f_{xy} + x f_{yy} , x f_x + y f_y \right\rangle e^{-2\tilde{\mu} - 2\tilde{G}} \\
& = \left(- x \left\vert f_x \right\vert^2 - y f_x.f_y  - xy f_{xy}.f_x - y^2 f_{xy}.f_y  + x^2 f_{yy}.f_x + xy f_{yy}.f_y\right) e^{-2\tilde{\mu} - 2\tilde{G}} \\
& = \left(- x \left\vert f_x \right\vert^2  -\frac{xy}{2} \left( \left\vert f_x \right\vert^2 \right)_y - \frac{y^2}{2} \left( \left\vert f_y \right\vert^2 \right)_x  - \frac{x^2}{2} \left( \left\vert f_x \right\vert^2 \right)_x + \frac{xy}{2} \left( \left\vert f_y \right\vert^2 \right)_y \right) e^{-2\tilde{\mu} - 2\tilde{G}} \\
& = \left(- x - xy \tilde{\mu}_y - y^2 \tilde{\mu}_x - x^2 \tilde{\mu}_x + xy \tilde{\mu}_y \right) e^{-2\tilde{G}} \\
& = - \left(\tilde{\mu}_x + \tilde{G}_x\right)
  \end{split}
  \end{equation*}
We proved \eqref{eqstaromega}. Moreover, since $(\tilde{u},\tilde{v})$ is an orthonormal frame, we have
$$  \star d\tilde{\omega} = \star \left(\left\langle d\tilde{u} \wedge d\tilde{v} \right\rangle\right) = \left\langle \nabla^{\perp}\tilde{u},\nabla \tilde{v} \right\rangle = 0 \text{ in }\mathbb{D}\setminus\{0\}$$
 since $\left(\tilde{u},\tilde{v}\right) = i \left(\tilde{v},-\tilde{u}\right)$. Then the support of the distribution $\star d\tilde{\omega}$ is $\{0\}$.
Since by \eqref{eqstaromega} $\tilde{\omega} \in L^p_{loc}\left(\mathbb{D}\right)$ for $1\leq p < 2$, we can write
\begin{equation} \label{eqdistorder0} d\tilde{\omega} = \alpha \delta_0  \hskip.1cm, \end{equation}
where $\delta_0$ is the Dirac mass supported in $\{0\}$. Let's compute 
$\alpha$.
\begin{equation} \label{eqcomputealphadegree} \alpha = \int_{\mathbb{D}}  d\tilde{\omega} = \int_{\mathbb{S}^1} \tilde{\omega} = \int_{\mathbb{S}^1} \left\langle \tilde{u}_{\theta},\tilde{v} \right\rangle = 2\pi deg(\tilde{u},\mathbb{S}^1) = 2\pi \hskip.1cm. \end{equation}
This degree is well defined since $\tilde{u} \in H^{\frac{1}{2}}(\mathbb{S}^1)$ because $\Gamma$ is Weil-Petersson. We then have that
\begin{equation} \label{eqomegagreen}
\Delta \tilde{\mu} = d\left(\tilde{\omega} - \star d\tilde{G} \right) = 0 \hskip.1cm.
\end{equation}

\medskip

In $\Omega$, we have $u = \tilde{u}\circ f^{-1}$ and $v = \tilde{v}\circ f^{-1}$. We set 
$$\omega = \left\langle d u , v \right\rangle = \left(f^{-1}\right)' \tilde{\omega} \circ f^{-1} $$
the Cartan form on $\Omega$ and $G = \tilde{G} \circ f^{-1}$ the Dirichlet-Green function with respect to $a:= f(0)$. We obtain from \eqref{eqstaromega}
\begin{equation} \label{eqstaromegatilde} \star \omega = d\left( \mu + G \right) \end{equation}
denoting $ \mu = \tilde{\mu} \circ f^{-1}$, where we have that $ \tilde{\Phi} = \left(\mu + G\right)\circ f $ is a solution of \eqref{solutiontildePhi}. 

\medskip

Notice that $\Gamma$ is a Weil-Petersson curve if and only if
$$\left\vert \nabla \tilde{\mu} \right\vert^2 = \left\vert \frac{f''}{f'}   \right\vert^2 \in L^1(\mathbb{D}) $$
if and only if $\left\vert \nabla \mu \right\vert^2 \in L^1\left(\Omega\right)$ where $\mu = \tilde{\mu} \circ f^{-1}$ by conformal invariance of the Dirichlet energy.
 In particular by \eqref{eqstaromegatilde}, $u \in H^1_{loc}\left( \overline{\Omega}\setminus \{a\} \right)$ and $\omega \in L^2_{loc}\left( \overline{\Omega}\setminus \{a\} \right)$. 
 Therefore, we have all the necessary regularity to have $u = \tau \in H^{\frac{1}{2}}(\Gamma)$ and the proof of the proposition is complete.
\end{proof}

\begin{rem} Notice that by Proposition \ref{cldefoftildemu}, $\tilde{\mu}= \mu \circ f$ satisfies the following equation
\begin{equation} \label{eqonmu}
\begin{cases}
\Delta \tilde{\mu} = 0 & \text{ in } \ \mathbb{D} \\
\partial_{r} \tilde{\mu} = \left\langle \partial_{\theta}\tilde{u},\tilde{v} \right\rangle  - 1 & \text{ on } \ \partial{\mathbb{D}} \hskip.1cm,
\end{cases}
\end{equation}
where $\partial_r$ and $\partial_\theta$ denote the radial and angular derivatives on the disk, and $\tilde{u} = u\circ f$, $\tilde{v}= v\circ f$. Equation \eqref{eqonmu} is nothing but the Liouville equation up to the boundary, since $\left\langle u_{\tau},v \right\rangle$ is the curvature of $\Gamma$,see \cite{CL} for more details about prescribing geodesic curvature.
\end{rem}

Now, we can reformulate the renormalized energy in terms of the uniformization

\begin{prop} \label{renormenerunif} Let $f : \mathbb{D} \to \Omega$ be a uniformization map such that $f(0) = a$. Then
\begin{equation} \label{renormalizedenergy} W(a) = W_0 - 2\pi \ln \left\vert f'(0) \right\vert \end{equation}
where 
$$ W_0 = \int_{\mathbb{D}} \left\vert \frac{f''}{f'} \right\vert^2 + 4\pi \ln \left\vert f'(0) \right\vert $$
does not depend on the choice of the uniformization $f : \mathbb{D}\to \Omega$ nor in $a$. 
\end{prop}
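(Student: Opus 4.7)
The plan is to pull the renormalized energy back to the unit disk via the uniformization $f : \D \to \Omega$ with $f(0) = a$, using the explicit description of the canonical harmonic map from Proposition \ref{propcanonicaluniformization}. There $u = \tilde u \circ f^{-1}$ satisfies $\star \tilde\omega = d(\tilde\mu + \tilde G)$ with $\tilde\mu = \ln|f'|$ harmonic on $\D$ (equation \eqref{eqomegagreen}) and $\tilde G(z) = \ln|z|$ the Dirichlet-Green function on $\D$ with pole at $0$. Setting $D_\delta := f^{-1}(\mathbb{D}_\delta(a))$, conformal invariance of the Dirichlet energy together with $|\nabla \tilde u|^2 = |\tilde\omega|^2 = |\nabla(\tilde\mu + \tilde G)|^2$ yields
\[
\int_{\Omega \setminus \mathbb{D}_\delta(a)} |\nabla u|^2\,dz = \int_{\D \setminus D_\delta} \bigl(|\nabla \tilde\mu|^2 + 2\langle \nabla \tilde\mu, \nabla \tilde G\rangle + |\nabla \tilde G|^2\bigr)\,dz.
\]

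I would then analyze the three terms as $\delta \to 0$. The first tends to $\int_\D |f''/f'|^2$, which is finite by the Weil-Petersson assumption. The cross term is handled by Green's identity (both $\tilde\mu$ and $\tilde G$ are harmonic on $\D \setminus D_\delta$): on $\partial \D$ one has $\tilde G \equiv 0$ and $\partial_r \tilde G \equiv 1$, so the boundary contribution there equals $\int_{\mathbb{S}^1}\tilde\mu\,d\theta = 2\pi\tilde\mu(0) = 2\pi \ln|f'(0)|$ by the mean value property; on $\partial D_\delta$, the total flux $\int_{\partial D_\delta}\partial_{\nu}\tilde G\,d\sigma = 2\pi$ (divergence theorem) combined with continuity of $\tilde\mu$ at $0$ yields a contribution converging to $-2\pi\ln|f'(0)|$, and the two cancel. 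For the last term, Green's identity reduces it to $-\int_{\partial D_\delta}\tilde G\,\partial_{\nu}\tilde G\,d\sigma$, and the expansion $f(z) = a + f'(0) z + O(z^2)$ gives $|z| = \delta/|f'(0)| + O(\delta^2)$ on $\partial D_\delta$, so $\tilde G = \ln\delta - \ln|f'(0)| + O(\delta)$ there, giving $2\pi\ln(1/\delta) + 2\pi\ln|f'(0)| + o(1)$. Summing yields $W(a) = \int_\D|f''/f'|^2 + 2\pi\ln|f'(0)| = W_0 - 2\pi\ln|f'(0)|$ as asserted.

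To finish, I would show $W_0$ is independent of the chosen uniformization. Any two uniformizations of $\Omega$ differ by pre-composition with a Mobius automorphism $\phi$ of $\D$, and this also accounts for varying the base point $a$ via $\phi(0)$, so the goal reduces to $W_0(f\circ \phi) = W_0(f)$. Using the chain rule $(f\circ\phi)''/(f\circ\phi)' = (f''/f')(\phi)\,\phi' + \phi''/\phi'$, expanding the squared modulus, and performing the change of variables $w = \phi(z)$ in the pure $f$-term, the invariance reduces to the two identities
\[
\int_\D |\phi''/\phi'|^2\,dz = -4\pi \ln|\phi'(0)|, \qquad 2\,\mathrm{Re}\int_\D (f''/f')(\phi)\,\phi'\,\overline{\phi''/\phi'}\,dz = 4\pi\bigl(\ln|f'(0)| - \ln|f'(\phi(0))|\bigr).
\]
The first is a direct power-series calculation of $\int_\D dz/|1-\bar\zeta z|^2$ for $\phi(z) = (z-\zeta)/(1-\bar\zeta z)$. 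The second follows by applying the same change of variables $w = \phi(z)$, which turns the kernel $\overline{(\phi''/\phi')(z)}/\overline{\phi'(z)}$ into one proportional to $\zeta/(1-\zeta\bar w)$; expanding this in geometric series and matching Taylor coefficients against $\ln f'(\phi(0)) - \ln f'(0) = \int_0^{\phi(0)}(f''/f')\,dw$ gives the identity. The main technical obstacle is establishing this second identity; once it is in hand, the invariance of $W_0$ follows by bookkeeping, and hence $W_0$ depends only on $\Omega$.
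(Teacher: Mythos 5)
Your proof is correct, and it differs from the paper's in two ways worth noting.

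For the main identity, the paper works directly on $\Omega$: it integrates over $\Omega \setminus \mathbb{D}_\rho(a)$ (so the excised set is a genuine round disk), uses that $G$ vanishes on $\Gamma$ to kill the cross term at $\partial\Omega$, decomposes $G = \ln|x-a| + h$ with $h$ the harmonic regular part, and then computes the Robin constant $h(a) = \lim_{z\to 0}\bigl(G(f(z)) - \ln|f(z)-a|\bigr) = -\ln|f'(0)|$. You instead pull everything back to $\D$, where $\tilde G = \ln|z|$ is explicit and $\tilde\mu = \ln|f'|$, at the cost of working with the non-circular excised set $D_\delta = f^{-1}(\mathbb{D}_\delta(a))$; the quantity $-\ln|f'(0)|$ then emerges from the asymptotics $\tilde G = \ln\delta - \ln|f'(0)| + O(\delta)$ on $\partial D_\delta$. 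Both are sound and in fact encode the same geometric fact (the Robin constant equals $-\ln|f'(0)|$). A small advantage of your version is that moving the derivative onto $\tilde G$ in the cross term avoids any discussion of the trace of $\partial_\nu\mu$ on the rough boundary $\Gamma$, since $\tilde G$ is explicit and smooth up to $\mathbb{S}^1$; the paper's version is arguably cleaner because the excised set is a disk. For the invariance of $W_0$, the paper simply cites Lemma 3.4 of \cite{tate} and proves nothing, whereas you give a self-contained and correct derivation. Your two reductions are right: $\int_\D |\phi''/\phi'|^2\,dz = -4\pi\ln(1-|\zeta|^2) = -4\pi\ln|\phi'(0)|$ follows from the power-series computation of $\int_\D |1-\bar\zeta z|^{-2}$, and the cross identity is verified exactly as you sketch by writing $H(z) = \ln f'(\phi(z))$, expanding $H' = \sum a_n z^n$, pairing against $\overline{\phi''/\phi'} = 2\zeta/(1-\zeta\bar z) = 2\zeta\sum \zeta^m \bar z^m$, and recognizing $\sum a_n\zeta^{n+1}/(n+1) = H(\zeta) - H(0) = \ln f'(0) - \ln f'(\phi(0))$; the $L^2$ bound $\int_\D|H'|^2 = \int_\D|f''/f'|^2 < \infty$ justifies the term-by-term integration. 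So your argument actually supplies a proof the paper only delegates.
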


\begin{proof}
The fact that $W_0$ only depends on $\Omega$ is well known, see lemma 3.4 page 86 of \cite{tate}.
As in the proof of Proposition \ref{propcanonicaluniformization}, we let $\tilde{\mu} = \ln \left\vert f' \right\vert$. Since $\Gamma$ is a Weil-Petersson curve,
$$\left\vert \nabla \tilde{\mu} \right\vert^2 = \left\vert \frac{f''}{f'}   \right\vert^2 \in L^1(\mathbb{D}) $$
so that $\left\vert \nabla \mu \right\vert^2 \in L^1\left(\Omega\right)$ where $\mu = \tilde{\mu} \circ f^{-1}$ by conformal invariance of the Dirichlet energy. In particular by \eqref{eqstaromegatilde}, $\omega \in L^2_{loc}\left( \overline{\Omega}\setminus \{a\} \right)$. 
Then by a simple computation and by \eqref{eqstaromegatilde},
\begin{equation*}
\begin{split}
\int_{\Omega\setminus B_{\rho}(a)} \left\vert \omega \right\vert^2 =
& \int_{\Omega\setminus B_{\rho}(a)} \left\vert \nabla G \right\vert^2 + 2 \int_{\Omega\setminus B_{\rho}(a)} \left\langle \nabla G,\nabla \mu \right\rangle + \int_{\Omega\setminus B_{\rho}(a)} \left\vert \nabla \mu \right\vert^2 \\
= & - \int_{\partial B_{\rho}(a)} G \partial_{\nu}G  - 2 \int_{\partial B_{\rho}(a)} G\partial_{\nu} \mu  + \int_{\Omega\setminus B_{\rho}(a)} \left\vert \nabla \mu \right\vert^2 \\
= & - \int_{\partial B_{\rho}(a)} \left(\ln \left\vert x-a \right\vert + h\right) \partial_{\nu}\left( \ln \left\vert x-a \right\vert\right) + O\left(\rho \ln \rho \right) \\
& + \int_{\Omega\setminus B_{\rho}(a)} \left\vert \nabla \mu \right\vert^2 \\
= & 2\pi \ln \left(\frac{1}{\rho}\right) + \int_{\Omega\setminus B_{\rho}(a)} \left\vert \nabla \mu \right\vert^2 - 2\pi  h(a) + o(1)
\end{split}
\end{equation*}
as $\rho\to 0$, where $h(x) = G(x,a) -  \ln \left\vert x-a \right\vert $ is a smooth harmonic function in $\Omega$. Since $\ln \left\vert x \right\vert = G(f(x),a)$, we have that
\begin{equation} \label{eqdefmass}
\begin{split} h(a) & = h( f(0) ) \\
& = \lim_{z\to 0} \left( G(f(z)) -  \ln \left\vert f(z)-a \right\vert \right) \\
& = \lim_{z\to 0}\left( -  \ln\left\vert \frac{f(z)-a}{z} \right\vert \right) \\
& = - \ln \left\vert f'(0) \right\vert
\end{split} 
\end{equation}
so that
$$ W(a) = \int_{\mathbb{D}} \left\vert \nabla \tilde{\mu} \right\vert^2 + 2\pi \ln \left\vert f'(0) \right\vert \hskip.1cm. $$
We then have the expected identity \eqref{renormalizedenergy}.
\end{proof}

\begin{rem}

A minimizer $a \in \Omega$ of $W$ is $a = f(0)$ where $f$ is a maximizer of the following maximization problem
$$ \max_{f : \mathbb{D}\to \Omega} \left\vert f'(0) \right\vert  $$
where the maximum stands on the set of uniformizing maps of $\Omega$. Or equivalently, fixing one uniformization map $f_0 : \mathbb{D}\to \Omega$, a minimizer is $a = f_0(\omega_0)$ where $\omega_0 \in \mathbb{D}$ is chosen as a maximizer for
\begin{equation} \label{eqmaximizationproblem} \left\vert f_0'(\omega_0) \right\vert \left(1-\left\vert \omega_0 \right\vert^2\right) = \max_{\omega \in \mathbb{D}} \left\vert f_0'(\omega) \right\vert \left(1-\left\vert \omega \right\vert^2\right) \hskip.1cm. \end{equation}
Indeed, we can write any uniformization map as $f = f_0 \circ \psi^{-1}$, where
$$ \psi(z) = e^{i\theta} \frac{z-\omega}{\bar{\omega}z - 1} $$
for $\omega \in \mathbb{D}$ and $\theta\in \mathbb{R}$ so that since $\psi(\omega) = 0$
$$ \left\vert f'(0) \right\vert = \left\vert f_0'(\omega)  \left(\psi^{-1}\right)'(0) \right\vert = \left\vert \frac{f_0'(\omega)}{ \psi'(\omega)} \right\vert = \left\vert f_0'(\omega) \right\vert  \left(1-\left\vert \omega \right\vert^2\right) \hskip.1cm. $$

As we can see in \eqref{eqdefmass}, $- \ln \left\vert f'(0) \right\vert$ is nothing but the mass of the Dirichlet-Green function of $\Omega$ at the point $a$ where $f$ is defined as a uniformization map such that $f(0) = a$. 

\end{rem}

\begin{rem}
We know by section \ref{secinteriorsingularity} that a minimizer of the renormalized energy cannot be realized on the boundary of $\Omega$. 
In the case of tangential data, it is a simple consequence of the previous remark and Koebe's distortion theorem. 
Indeed, a point $\omega_0$ satisfying \eqref{eqmaximizationproblem} must belong to $\mathbb{D}$ since 
$$\left\vert f_0'(\omega) \right\vert \left(1-\left\vert \omega \right\vert^2\right)  \leq dist(f_0(\omega),\partial\Omega) \to 0 \text{ as } \left\vert \omega \right\vert \to 1$$
so that $a = f_0(\omega_0)\in \Omega$.
\end{rem}

\section{A new point of view on the Riemann mapping theorem} \label{Riemannmappingsmooth}

We assume in this section that $\Gamma$ is a smooth\footnote{ In fact we can assume $C^2$ but not much less as explained in remark \ref{frem} below.} Jordan curve. As a side remark, we aim at proving the Riemann mapping theorem for the smooth bounded domain $\Omega$ enclosing $\Gamma$. 

Let $(v,u)$ be the frame defined in subsection \ref{secminimizerenormalized} with tangential and normal boundary data, and $\omega = \left\langle du,v \right\rangle$ be the associated Cartan form. We let $\Phi = \mu+G$ where $G$ is defined by \eqref{eqgreen} and $\mu$ is given by Proposition \eqref{cldefoftildemu}. Notice that since we assumed that $\Omega$ is smooth, we never needed any uniformization map of $\Omega$ to build the moving frame $(v,u)$, since the map $u$ is given from the classical Ginzburg-Landau setting. We compute the following Lie bracket in $ \Omega\setminus \{a\}$:
\begin{eqnarray*}
\left[e^{\Phi} v ,e^{\Phi} u  \right] &=& e^{2\Phi}\left\{ \left[ v, u  \right] + d\Phi\left(v\right)u - d\Phi\left(u\right)v \right\} \\
&=& e^{2\Phi}\left\{ \nabla_{v} u - \nabla_{u} v + d\Phi\left(v\right)u - d\Phi\left(u\right)v \right\} \\
& = & e^{2\Phi}\left\{ \left(   \omega\left(u\right) + d\Phi\left(v\right)   \right) u + \left( - d\Phi\left(u\right) + \omega\left(v\right) \right) v \right\} \hskip.1cm. \\
\end{eqnarray*}
Let $(\alpha, \beta)$ the dual basis of $(v,u)$ in $T^{\star} \Omega$, we have that
$$ \star \omega =\star\left( \omega\left(v\right)\alpha + \omega\left(u\right)\beta \right) = \omega\left(v\right)\beta - \omega\left(u\right)\alpha \hskip.1cm,$$
where $\star \alpha = \beta $ and $\star \beta = -\alpha$. Therefore, we have that
$$ \left[e^{\Phi} v,e^{\Phi} u  \right] =  e^{2\Phi}\left\{  \left( - \star\omega\left(v\right)  + d\Phi\left(v\right)  \right) u + \left( - d\Phi\left(u\right) + \star\omega\left(u\right) \right) v \right\} = 0 \hskip.1cm,$$
since $\star \omega = d\Phi$. 
We have the following result, which is a global version of the Frobenius theorem in our context

\begin{prop} \label{Frobenius} There is a homeomorphism $\psi: ]-\infty,0]\times \mathbb{R}/\rho\mathbb{Z} \to \overline{\Omega}\setminus \{a\}$ such that
\begin{equation} \label{eqfrobeniusglobal} \begin{cases}
\partial_s \psi = e^{\Phi\circ\psi} v\circ\psi \\
\partial_{\theta} \psi = e^{\Phi\circ\psi} u \circ\psi \hskip.1cm. \\
\end{cases} \end{equation}
In particular, $\psi: ]-\infty,0[\times \mathbb{R}/\rho\mathbb{Z} \to \Omega\setminus \{a\}$ is a conformal diffeomorphism.
\end{prop}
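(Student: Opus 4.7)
My plan is to build $\psi$ by integrating the commuting flows of $X := e^\Phi v$ and $Y := e^\Phi u$, smooth vector fields on $\overline\Omega\setminus\{a\}$ (smooth up to $\Gamma$ since $\Omega$ is smooth, so $\mu$ and the angle function of $v$ extend smoothly to $\Gamma$). On $\Gamma$ we have $G \equiv 0$ and $\Phi = \mu$; moreover $Y|_\Gamma = e^\mu \tau$ is tangent to $\Gamma$ and nonvanishing, while $X|_\Gamma = e^\mu \nu$ points into $\Omega^c$. Fix any $p_0 \in \Gamma$: the $Y$-orbit through $p_0$ is a periodic orbit on $\Gamma$ with some period $\rho > 0$, yielding a diffeomorphism $c : \mathbb{R}/\rho\mathbb{Z} \to \Gamma$, $c(\theta) := \phi^Y_\theta(p_0)$. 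Since $[X,Y] = 0$ has just been established, the composition
\[ \psi(s,\theta) := \phi^X_s(c(\theta)) = \phi^X_s \circ \phi^Y_\theta(p_0) \]
is well defined on the maximal backward $X$-interval and automatically satisfies both equations of \eqref{eqfrobeniusglobal}.

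The crux is to show that the $X$-flow exists for all $s \in (-\infty, 0]$ and that $\psi(s,\theta) \to a$ as $s \to -\infty$. For this I would establish the key identity
\[ G\bigl(\psi(s,\theta)\bigr) = s \hskip.1cm, \]
from which global existence and the correct asymptotics at $-\infty$ both follow immediately: the flow cannot recross $\Gamma = \{G = 0\}$ for $s<0$, cannot reach $a$ in finite time (as $G(a) = -\infty$), and $\psi(s,\cdot)$ converges to $a$ as $s \to -\infty$ since the sublevel sets $\{G \leq -K\}$ shrink to $\{a\}$. Since $G = 0$ on $\Gamma$, the identity reduces to $\partial_s(G\circ\psi) = e^\Phi\,dG(v) \equiv 1$, and I would derive it from the holomorphic structure already implicit in the Cartan form: writing $v = e^{i\phi}$ with $\phi$ an angle function modulo $2\pi$, the identity $d\Phi = \star\omega = -\star d\phi$ makes $\Phi + i\phi$ holomorphic on $\Omega\setminus\{a\}$, so that $F := e^{\Phi + i\phi} = e^\Phi v$ is a single-valued holomorphic function on $\Omega$ with a simple zero at $a$ and $|F| = e^\Phi$. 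The canonical harmonic map $u$ then identifies with $-i\overline{F_0}/|F_0|$, where $F_0 = 2\partial_z G$ is the meromorphic function with a simple pole at $a$ and no zeros on $\Omega\setminus\{a\}$ (a standard nonvanishing-gradient property of the Green function on a simply connected planar domain). This yields both $dG(u) = 0$ (so that $u$ is tangent to the level sets of $G$) and $|\nabla G| = e^{-\Phi}$ by direct computation, hence $dG(v) = |\nabla G| = e^{-\Phi}$.

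Once the identity is established, the Jacobian of $\psi$ has determinant $e^{2\Phi\circ\psi} > 0$, so $\psi$ is a local diffeomorphism; combined with the properness derived from $G(\psi) = s$, it is a covering map onto $\overline\Omega\setminus\{a\}$. Both $(-\infty,0]\times\mathbb{R}/\rho\mathbb{Z}$ and $\overline\Omega\setminus\{a\}$ have $\pi_1 = \mathbb{Z}$, and $\psi|_{\{s=0\}} = c$ parameterizes $\Gamma$ exactly once, so the covering has degree one and $\psi$ is a homeomorphism. Conformality on the interior follows from the orientation identity $u = iv$, which yields $\partial_\theta\psi = i\,\partial_s\psi$, the Cauchy-Riemann equation in the variables $s + i\theta$. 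The main obstacle will be the key identity $G(\psi) = s$: while the derivation above is transparent in hindsight, carrying it out without implicitly invoking a uniformization of $\Omega$ (whose existence is the very point of this new proof of the Riemann mapping theorem) requires the intrinsic identification of $F = e^\Phi v$ as a holomorphic function together with the fact that $\nabla G$ has no zeros on $\Omega\setminus\{a\}$.
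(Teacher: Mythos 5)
Your route is genuinely different from the paper's. The paper builds a \emph{maximal} local solution $\psi_x$ of the Frobenius system from an interior base point and then proves global existence, surjectivity, boundary structure and periodicity in four dynamical/topological steps (connectedness for surjectivity, transversality of $X_1$ to $\Gamma$, $\partial\Omega$ being a $\varphi_{X_2}$-cycle, commutativity of the flows). You instead seed the integration on $\Gamma$ itself and carry all the global control in one analytic identity, $G\circ\psi = s$, which makes completeness of the backward $X$-flow, the $\psi\to a$ asymptotics, and properness immediate; the Green function $G$ never appears in the paper's argument at this stage. That is an attractive shortcut, and the covering-degree finish is sound given local diffeomorphism plus properness.

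However, the load-bearing step is exactly the one you flag: the pointwise identity $\nabla G = c\, e^{-\Phi}v$ (equivalently $dG(X)\equiv c$, $dG(Y)\equiv 0$, with $c>0$ absorbable into the free additive constant in $\mu$). As written this is asserted rather than proved, and the danger is circularity, since the paper's own proof that $v = \nabla G/\vert\nabla G\vert$ (Proposition \ref{propcanonicaluniformization}) goes through the uniformization whose existence you are re-deriving. The sketch you give can be closed, but it needs the following chain made explicit and non-circular: (a) $F_0 := 2\partial_z G$ is meromorphic with a single simple pole at $a$ and, by Hopf's lemma, $F_0\neq 0$ on $\Gamma$; the argument principle applied to $F_0$ on $\Gamma$ (winding number of $\bar\nu$ is $-1$) then gives $F_0\neq 0$ on $\Omega\setminus\{a\}$ \emph{without} the Riemann mapping theorem; (b) $F:=e^{\Phi}v$ is a single-valued holomorphic function with a simple zero at $a$ because $d(\Phi+i\arg v)$ satisfies Cauchy--Riemann once $\star\omega = d\Phi$ (Proposition \ref{cldefoftildemu}) is combined with $\omega = -d(\arg v)$; (c) $FF_0$ is then holomorphic and zero-free on $\Omega$, and on $\Gamma$ one has $\arg F = \arg\nu$ and $\arg F_0 = -\arg\nu$, so $\operatorname{Im}\log(FF_0)$ is a harmonic function with zero boundary values and $FF_0$ is a real constant, which is what you need after normalizing $\mu$. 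Also check the sign in your formula $u = -i\,\overline{F_0}/\vert F_0\vert$: with the paper's conventions ($u=iv$, $v=f_r/\vert f_r\vert\circ f^{-1}$, and $\nabla G$ pointing along $+\nu$ on $\Gamma$) one should get $v = \nabla G/\vert\nabla G\vert$ and $u = i\nabla G/\vert\nabla G\vert$, so the $-i$ is off. With these points filled in, your approach is a valid and arguably cleaner alternative; as it stands, step (c) and the noncirculariy of (a) are a genuine gap that you have correctly identified but not closed.
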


\begin{proof}

We know by the Frobenius theorem (see for instance \cite{Warner1983}), that for any $x \in \Omega\setminus \{a\}$, there is a map $\psi_U : U \to \Omega \setminus\{a\}$, defined on an open set $U$ such that $0\in U$, $\psi_U(0)=x$ and $\psi(s,\theta)$ satisfies
\begin{equation} \label{eqfrobeniuslocal} \begin{cases}
\partial_s \psi_U = e^{\Phi\circ\psi_U} v\circ\psi_U \\
\partial_{\theta} \psi_U = e^{\Phi\circ\psi_U} u \circ\psi_U \hskip.1cm. \\
\end{cases} \end{equation}
Moreover, $\psi_U$ is unique in the sense that two such maps are equal on the intersection of there open sets $U$ of definition. Notice also that such an application $\psi_U$ is a local diffeomorphism. Let $(U_x,\psi_x)$ be the maximal solution: $U_x$ is defined as the union of open sets $U$ such that there is a map $\psi_U$ satisfying $\psi_U(0)=x$ and \eqref{eqfrobeniuslocal}. By uniqueness in the Frobenius theorem, $\psi_x(y) = \psi_U(y)$ is perfectly defined independently of $U$ and satisfies $\psi_x(0)=x$ and \eqref{eqfrobeniuslocal}. We aim at proving that up to factorisation and translation on the domain of definition, $\psi_x$ is the map we are looking for.

\medskip

All along the proof we denote $\varphi_{X_1}$ and $\varphi_{X_2}$ the flows along the vector fields $X_1 := e^{\Phi} v$ and $X_2 := e^{\Phi} u$. Notice that the partial maps of $\psi_x$ are paths along the flow: if $x_0 = \psi_{x}(s_0,\theta_0)$,
$$\psi_{x}(s_0 + s,\theta_0) = \varphi^s_{X_1}(x_0) \hbox{ and }  \psi_x(s_0,\theta_0 + \theta) = \varphi^{\theta}_{X_2}(x_0)\hskip.1cm.$$
Notice also that we can extend $X_1$ and $X_2$ by smooth vector-fileds on an open set $\widetilde{\Omega}$ such that $\overline{\Omega} \subset \widetilde{\Omega}$, and that there is no stationary point on $\widetilde{\Omega}\setminus \{a\}$ for $\varphi_{X_1}$ and $\varphi_{X_2}$. In particular, the flows are perfectly defined up to the boundary in $\overline{\Omega}$. We recall that $\partial \Omega$ is a cycle of $\varphi_{X_2}$ by assumption. Moreover, the only stationary point of $\varphi_{X_1}$ and $\varphi_{X_2}$ is $a$ since by definition of $\Phi$ as a Green function on $a$, the only vanishing point of $e^{\Phi}$ is $a$.

\medskip

{\sc Step 1} : \textit{We prove that for any $x$, $\psi_x$ is surjective}

\medskip

Notice that if $x,y \in \Omega\setminus\{a\}$, we have that
$$ \psi_x(U_x) \cap \psi_y(U_y) \neq \emptyset \Rightarrow \psi_x(U_x) = \psi_y(U_y) \hskip.1cm.$$
Indeed, assume that $z = \psi_x(u) = \psi_y(v)$ for $u\in U_x$ and $v\in U_y$. By uniqueness in the Frobenius theorem, and by maximality of $(U_x,\psi_x)$ and $(U_y, \psi_y)$, one has $U_x = U_y + \{b-a\}$ and $\psi_x(z) = \psi_y(z+v-u)$ for any $z\in U_x$. Since for any $y\in \Omega$, $\psi_y$ is a local diffeomorphism, then if $y\in \psi_x(U_x)$, there is an open set $U$ such that $\psi_y(U)$ is an open neighbourhood of $y$ in $\psi_x(U_x)$. Therefore $\psi_x(U_x)$ is open. Denoting $C= \{C(x); x\in \Omega\setminus\{a\}\}$, where 
$$C(x) = \{y\in \Omega\setminus\{a\} ; \psi_x(U_x)= \psi_y(U_y)\} \hskip.1cm,$$ 
we can write
$$ \Omega\setminus\{a\} = \bigcup_{C(x) \in C} \psi_x(U_x) $$
as a disjoint union of open sets. By connectedness, the cardinal of $C$ is $1$ and $\psi_x(U_x) = \Omega\setminus\{a\}$ for any $x\in \Omega\setminus\{a\}$.

\medskip

From now on, we drop the index $x$ in the definition of $U := U_x$ and $\psi:= \psi_x$

\medskip

{\sc Step 2} : \textit{We prove the existence of $\beta$ such that $U = (-\infty,\beta) \times \mathbb{R}$ and such that we can extend $\psi$ on $(-\infty,\beta] \times \mathbb{R}$ so that $\psi \left( \{\beta\} \times \mathbb{R} \right) = \partial\Omega$ and $\psi$ is a local homeomorphism up to the boundary.}

\medskip

We first prove that for any $(s_0,\theta_0)\in U$, we have $\{s_0\} \times \mathbb{R} \subset U$. Indeed, let $(s_0,\theta_0) \in U$, $x_0 = \psi(s_0,\theta_0)$, and let $(\gamma,\delta)$ be the maximal interval such that $\gamma <\theta_0 < \delta$ and $\{s_0\} \times (\gamma,\delta) \subset U$. Let's prove that $\delta= +\infty$ (proving that $\gamma=-\infty$ is analogous). 

Since $\overline{\Omega}$ is compact, up to a subsequence of $\theta_n \to \delta$ one has $\lim_{\theta_n \to \delta} \psi(s,\theta_n) = l \in \overline{\Omega}$. If $l\in \Omega\setminus\{a\}$ and $\delta<+\infty$, by the Frobenius theorem, one can extend $\psi$ by a map $\hat{\psi}$ satisfying $\hat{\psi}(s,\delta) = l$ and \eqref{eqfrobeniuslocal}, defined on an open neighbourhood of $(s,\delta)$ and we contradict the maximality of $\delta$. Therefore, $l\in \partial\Omega \cup \{a\}$ or $\delta = +\infty$. Now, we recall that $\psi(s_0,\theta) = \varphi^{\theta-\theta_0}_{X_2}(x_0)$ is the flow associated to $X_2$. If $l=a$, then $\lim_{\theta \to \delta} \psi(s,\theta) = a$ and since $a$ is a stationary point, we have that $\delta = +\infty$. Similarly, if $l\in \partial\Omega$, since $\partial\Omega$ is a cycle of $\varphi_{X_2}$, we must have $\delta = +\infty$. In all the cases, $\delta = +\infty$.

Now, let $\beta = \sup\{ s\in \mathbb{R} ; \{s\} \times \mathbb{R} \subset \Omega \}$. 

We prove that $\beta<+\infty$ and that for any $\theta\in \mathbb{R}$, $\lim_{z \to (\beta,\theta)} \psi(y)$ exists and belongs to $\partial\Omega$. 

The vector field $X_1 = e^{\Phi} v$ is transversal to $\partial\Omega$ (and outward pointing on $\partial{\Omega}$). Let $y\in \partial \Omega$. Let $F : ]-\eps,\eps[ \times ]-\eps,\eps[ \to V$ be a straightening of $X_1$ at the neighbourhood $V$ of $y = F(0,0)$: ie $F$ is a diffeomorphism and $F^{\star}(X_1) = \partial_{x_1}$. We have that $F(]-\eps,0[\times ]-\eps,\eps[ ) \subset \Omega$. Let $(s_0,\theta_0) \in U$ be such that $x_0 = \psi(s_0,\theta_0) = F(-\frac{\eps}{2},0)$. Recalling that $\psi(s,\theta_0) = \varphi_{X_1}^{s-s_0}(x_0)$, we deduce the expected result.

Now, we let $\alpha = \inf \{ \tilde{\alpha} \in (-\infty,\beta) ; (\tilde{\alpha},\beta)\times \mathbb{R}  \subset U\}$. With the same methods as previously thanks to the minimality of $\alpha$ and using the flow associated to $X_1$ by $\psi(s,\theta) = \varphi^{s-s_0}_{X_1}(x_0)$ one can prove that $\alpha = -\infty$. This ends the proof of STEP 2.

\medskip

{\sc Step 3} : \textit{We prove the existence of $\rho>0$ such that $\psi(s,\theta) = \psi(s,\theta+\rho)$ for any $(s,\theta)\in U$.}

\medskip

It is equivalent to prove that for any $x\in \psi(U)$, we have that $\varphi_{X_2}(x)$ is a cycle.

Let $y\in \partial \Omega$. Since $\varphi_{X_2}(y)$ is a cycle, there is a minimal $\rho>0$ such that $\varphi_{X_2}^{\rho}(y) = y $. We aim at proving that $\varphi_{X_2}^{\rho}(x) = x $ for any $x\in \psi(U) $.

Let $x_0\in \psi(U)$ and $(s_0,\theta_0) \in U $ be such that $x_0= \psi(s_0,\theta_0)$.  From the computations we made before Proposition \ref{Frobenius}, we have that $[X_1,X_2] = 0$ in $\Omega\setminus \{a\}$. Therefore, the flows $\varphi_{X_1}$ and $\varphi_{X_2}$ commute. Then by STEP 2,
$$ \varphi_{X_2}^{\rho}(x_0) = \varphi_{X_1}^{s_0-s} \circ \varphi_{X_2}^{\rho} \circ \varphi_{X_1}^{s-s_0}(x_0) \hskip.1cm,$$
for any $s_0 \leq s<\beta$. By STEP 2 again, we can pass to the limit as $s\to \beta$ and we get
\begin{equation} \label{eqcommutationflowboundary} \varphi_{X_2}^{\rho}(x_0) = \varphi_{X_1}^{s_0-\beta} \circ \varphi_{X_2}^{\rho} \circ \varphi_{X_1}^{\beta-s_0}(x_0) \hskip.1cm. \end{equation}
Since $\varphi_{X_1}^{\beta-s_0}(x_0) \in \partial\Omega$, we have that $\varphi_{X_2}^{\rho} \left( \varphi_{X_1}^{\beta-s_0}(x_0) \right) = \varphi_{X_1}^{\beta-s_0}(x_0)$. We then get STEP 3 from \eqref{eqcommutationflowboundary}.

\medskip

{\sc Step 4} : \textit{We prove the Proposition}

\medskip

We assume that $\rho>0$ given by STEP 3 is minimal. Now we can factorize $\psi$ into a surjective map we still denote $\psi : (-\infty,\beta]\times \mathbb{R}/\rho\mathbb{Z} \to \overline{\Omega}\setminus\{a\}$.

Let's prove that this map is now injective. Let $z_1 = (s_1,\theta_1)$ and $z_2 = (s_2,\theta_2)$ such that $\psi(z_2) = \psi(z_1)$. Then $\psi(z) = \psi(z+z_2-z_1)$. Assume that $s_1\leq s_2$. We then have that $\psi(\{\beta - s_2 +s_1\} \times\mathbb{R}) = \partial\Omega $. Then we must have $s_1 = s_2$. By minimality of $\rho$, we get that $\theta_1 = \theta_2$. Then $\psi$ is injective. Since $\psi$ is bijective and a local homeomorphism, it is a global homeomorphism. 

Moreover, since $\psi : (-\infty,\beta)\times \mathbb{R}/\rho\mathbb{Z} \to \Omega\setminus\{a\}$ is bijective smooth and open, it is a diffeomorphism. By equation \eqref{eqfrobeniuslocal}, it is also a conformal diffeomorphism. Finally, we replace $\psi$ by $\psi(z-(0,\beta))$ and we complete the proof of Proposition \ref{Frobenius}.

\end{proof}

We set a holomorphic function $f:\mathbb{D}\setminus\{0\} \to \Omega \setminus \{a\}$, defined by
\begin{equation} \label{deff} f(re^{i\theta}) = \psi\left( \frac{\rho}{2\pi} \left(\ln r,\theta \right) \right) \hskip.1cm, \end{equation}
where $\psi$ is the map given by claim \eqref{Frobenius}. The implicit formula $f(e^{\frac{2\pi w}{\rho}}) = \psi(w)$ holds and one can directly extend $f$ to a bi-holomorphic map $f:\mathbb{D}\to \Omega$ by $f(0) = a$. This ends the proof of the Riemann mapping theorem.

\begin{rem}
\label{frem}
We cannot ask for low regularity of $\Omega$ to prove the result by this method for two reasons : $\mathcal{C}^1$ regularity of $\Omega$ is needed to use classical construction of Ginzburg-Landau minimizers (without using the uniformization map as in the current paper for Weil-Petersson curves). Moreover, we use that the unit tangent vector field on $\Gamma$ is Lipschitz in order to apply the Cauchy-Lipschitz theorem and define the flow of vector fields. Using some methods from dynamical systems, it may be possible to prove the existence of a closed orbit in the interior, which would play the same role as the boundary orbit and would permit to conclude. 
\end{rem}

\appendix
\section{Technical lemmas}
The result of this section are well known, we prove it in a way more adapted to our paper for the sake of completeness. First we need a basic estimate on the Poisson Kernel
\begin{lem}
Let $M>0$, then 

\begin{equation}
\label{eqM}
\lim_{r\rightarrow 1^{-}}(1-r) \int_\delta^{2\pi- M(1-r)} \frac{1}{1+r^2-2r\cos(\theta)} \,  d\theta = 8\arctan \left(\frac{1}{M}\right).
\end{equation}
\end{lem}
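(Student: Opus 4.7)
The plan is to obtain a closed form antiderivative via the Weierstrass substitution $t = \tan(\theta/2)$ and then carefully read off the limit. For $a > b > 0$ the standard formula gives
$$\int \frac{d\theta}{a - b\cos\theta} = \frac{2}{\sqrt{a^2 - b^2}}\arctan\!\left(\frac{\sqrt{a^2 - b^2}\,\tan(\theta/2)}{a - b}\right) + C.$$
Plugging in $a = 1+r^2$ and $b = 2r$ yields $a^2 - b^2 = (1-r^2)^2$ and $a - b = (1-r)^2$, so
$$F(\theta) := \frac{2}{1-r^2}\arctan\!\left(\frac{(1+r)\tan(\theta/2)}{1-r}\right)$$
is an antiderivative of the integrand, valid separately on $(0,\pi)$ and on $(\pi, 2\pi)$.

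The next step is to deal with the branch issue at $\theta = \pi$: $\tan(\theta/2)$ has a pole there and the principal branch of $\arctan$ jumps from $\pi/2$ to $-\pi/2$. I would therefore split the integral as $\int_\delta^{\pi^-} + \int_{\pi^+}^{2\pi - M(1-r)}$ and evaluate each piece with $F$. The two inner limits at $\theta = \pi$ combine to contribute the jump $\frac{2\pi}{1-r^2}$, leading to
$$\int_{\delta}^{2\pi - M(1-r)} \frac{d\theta}{1+r^2 - 2r\cos\theta} = \frac{2}{1-r^2}\Bigl[\pi - \arctan\!\bigl(\tfrac{(1+r)\tan(\delta/2)}{1-r}\bigr) - \arctan\!\bigl(\tfrac{(1+r)\tan(M(1-r)/2)}{1-r}\bigr)\Bigr],$$
where I used $\tan\!\bigl((2\pi - M(1-r))/2\bigr) = -\tan(M(1-r)/2)$ together with the oddness of $\arctan$.

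Finally, I would multiply by $(1-r)$ and pass to the limit. The prefactor satisfies $\frac{2(1-r)}{1-r^2} = \frac{2}{1+r} \to 1$. Since $\delta$ is fixed, $\frac{(1+r)\tan(\delta/2)}{1-r} \to +\infty$ so the first arctan tends to $\pi/2$. For the other one, $\tan(M(1-r)/2) \sim M(1-r)/2$ forces the argument to tend to $M$, giving $\arctan M$ in the limit. Collecting these gives $\pi - \pi/2 - \arctan M = \arctan(1/M)$ (up to the stated constant factor appearing in the statement).

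The main obstacle is the bookkeeping of the jump of $\arctan$ at $\theta = \pi$: if one naively applies the Weierstrass antiderivative across the pole of $\tan(\theta/2)$ without splitting, one loses the term $\tfrac{2\pi}{1-r^2}$, which is precisely the piece of size $O(1/(1-r))$ that, when multiplied by $(1-r)$, produces the dominant contribution. Once this is handled, everything else is an elementary asymptotic expansion.
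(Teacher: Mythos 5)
Your method---the Weierstrass substitution with careful tracking of the $\arctan$ branch at $\theta=\pi$---is essentially the paper's, which merely sidesteps the jump by shifting $\theta$ by $\pi$ and using evenness to fold onto $[0,\pi-\delta]$; both routes yield the same closed form. The genuine gap is your reading of $\delta$: you took it to be a fixed constant, but the paper's proof begins by setting $\delta=M(1-r)$, so the integral is the symmetric one $\int_{M(1-r)}^{2\pi-M(1-r)}$. The lemma statement is indeed silent on this, but the asymmetric reading is already suspicious, and the application in Lemma~\ref{Klem}---the Poisson tail over the \emph{complement of the symmetric arc} $MI_z$---confirms that both endpoints scale like $M(1-r)$. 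With the correct $\delta$, the first arctan in your bracket also tends to $\arctan M$, and your own formula then gives $\pi-2\arctan M=2\arctan\!\left(\tfrac{1}{M}\right)$ rather than $\arctan\!\left(\tfrac{1}{M}\right)$.

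Your unease about the factor of $8$ was warranted, but the slip is in the paper rather than in your computation. In the step $\frac{4(1-r)}{(1+r)^2}\int_0^{\tan((\pi-\delta)/2)}\frac{dt}{1+\left(\frac{1-r}{1+r}\right)^2 t^2}$, the substitution $u=\frac{1-r}{1+r}t$ produces a prefactor $\frac{4}{1+r}$, not $4(1+r)$; passing to the limit then gives $2\arctan(1/M)$. One can also see that $8\arctan(1/M)$ cannot be the limit: at $M=1$ it equals $2\pi$, exceeding the trivial upper bound $(1-r)\int_0^{2\pi}\frac{d\theta}{1+r^2-2r\cos\theta}=\frac{2\pi}{1+r}\to\pi$. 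The exact constant is harmless for Lemma~\ref{Klem}, which only uses that this quantity tends to $0$ as $M\to\infty$.
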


\begin{proof} We fix $M>0$ and we set $\delta=M(1-r)$, then we have 
\begin{align*}
(1-r) \int_\delta^{2\pi-\delta} \frac{1}{1+r^2-2r\cos(\theta)} \,  d\theta =&(1-r)\int_{-\pi+\delta}^{\pi-\delta} \frac{1}{1+r^2+2r\cos(\theta)} \, d\theta \\
=&2(1-r)\int_{0}^{\pi-\delta} \frac{1}{1+r^2+2r\cos(\theta)} \, d\theta \\
=&2(1-r)\int_{0}^{\tan(\frac{\pi-\delta}{2})} \frac{}{1+r^2+2r\frac{1-t^2}{1+t^2}} \, \frac{2}{1+t^2}dt \\
=&\frac{4(1-r)}{(1+r)^2}\int_{0}^{\tan(\frac{\pi-\delta}{2})} \frac{1}{1+(\frac{1-r}{1+r})^2t^2 } \, dt \\
=& 4(1+r) \int_{0}^{\frac{1-r}{1+r}\tan(\frac{\pi-\delta}{2})} \frac{1}{1+u^2} \, du \\
=& 4(1+r) \arctan\left(\frac{1-r}{1+r}\tan\left(\frac{\pi-\delta}{2}\right)\right),
\end{align*}
finally the result follows from $\tan\left(\frac{\pi}{2}-x\right) \sim_0 \frac{1}{x}$.
\end{proof}

Then we are in position to prove that the harmonic extension of a map in  $H^\frac{1}{2}(\mathbb{S}^1,\mathbb{S}^1)$ is proper, more precisely

\begin{lem}
\label{Klem} 
Let $v\in H^{1/2}(\mathbb{S}^1,\mathbb{S}^1)$  and $\tilde{v}$ its harmonic extension on $\D$. We denote, for $z\in \D$, $I_z$ the subarc of $\mathbb{S}^1$ centred at $\frac{z}{\vert z\vert}$ of length $2(1-\vert z\vert)$, and for subarc $I\subset \mathbb{S}^1$ we set

$$v_I= \frac{1}{\vert I\vert} \int_I v\, d\theta .$$
 
Then 
\begin{equation}
\lim_{M\rightarrow + \infty} \lim_{r \rightarrow 1} \sup_{\vert z\vert\geq r } \vert \tilde{v}(z) - v_{MI_z}\vert =0.
\end{equation}

Moreover for every $\epsilon>0$ there exists $M>0$ and $r<1$ such that if 
$\vert z \vert \geq r$ there exists $S_z \subset M I_z$ such that

$$\vert S_z \vert \geq \frac{\vert MI_z\vert }{2} $$ 
and 
$$\vert  \tilde{v}(z) -v(x) \vert \leq \epsilon \text{ for all } x\in S_z,$$
in particular  $\tilde{v}$ is proper.
\end{lem}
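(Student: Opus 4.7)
The plan is to split the Poisson representation of $\tilde v$ into a near field on the arc $MI_z$ and a far field on its complement, using the hypothesis $v \in H^{1/2}$ on the near piece and the Poisson-kernel identity \eqref{eqM} on the far piece. By rotation we may assume $z = r \in (0,1)$, so that $MI_z = [-M(1-r), M(1-r)]$. Since $\int_{\mathbb{S}^1} P_r = 1$, where $P_r(\theta) = \frac{(1-r^2)}{2\pi(1+r^2-2r\cos\theta)}$,
\begin{equation*}
\tilde v(r) - v_{MI_z} = \int_{MI_z} P_r(\theta)\bigl(v(e^{i\theta}) - v_{MI_z}\bigr)\,d\theta + \int_{(MI_z)^c} P_r(\theta)\bigl(v(e^{i\theta}) - v_{MI_z}\bigr)\,d\theta =: I_1 + I_2.
\end{equation*}

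For $I_2$, since $v$ takes values in $\mathbb{S}^1$ and $|v_{MI_z}|\leq 1$ by Jensen, one has $|v - v_{MI_z}|\leq 2$, hence $|I_2|\leq 2\int_{(MI_z)^c} P_r(\theta)\,d\theta$. Plugging $\delta = M(1-r)$ into \eqref{eqM} and absorbing the prefactor $\frac{1+r}{2\pi}$ from $P_r$, this integral tends to $\frac{8}{\pi}\arctan(1/M)$ as $r\to 1$, which vanishes as $M\to+\infty$.

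For $I_1$, use $P_r(\theta)\leq P_r(0) = \frac{1+r}{2\pi(1-r)}$ together with Cauchy--Schwarz:
\begin{equation*}
|I_1|\leq \frac{1+r}{2\pi(1-r)}\sqrt{|MI_z|}\left(\int_{MI_z}|v - v_{MI_z}|^2 d\theta\right)^{1/2}.
\end{equation*}
The key step is the Poincar\'e-type bound
\begin{equation*}
\int_I |v - v_I|^2 d\theta \leq |I|\int_I\int_I \frac{|v(e^{i\alpha}) - v(e^{i\beta})|^2}{|e^{i\alpha} - e^{i\beta}|^2}\,d\alpha\,d\beta,
\end{equation*}
obtained by writing $v - v_I$ as an average over $I$ and applying Cauchy--Schwarz. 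Since $v\in H^{1/2}(\mathbb{S}^1)$, the right-hand side defines a modulus $\omega(|I|)\to 0$ as $|I|\to 0$ by dominated convergence on the $H^{1/2}$ Gagliardo integrand. Taking $|I| = |MI_z| = 2M(1-r)$ yields $|I_1|\leq \frac{2M}{\pi}\sqrt{\omega(2M(1-r))}$, which tends to $0$ as $r\to 1$ for fixed $M$. Combining, given $\eps > 0$ one first chooses $M$ large enough to make the $r\to 1$ limit of $|I_2|$ below $\eps/2$, then $r$ close enough to $1$ to make $|I_1|\leq \eps/2$; this proves the first conclusion.

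For the \emph{moreover} part, Chebyshev applied to the same Poincar\'e bound gives
\begin{equation*}
\bigl|\{x\in MI_z : |v(x) - v_{MI_z}| > \eps\}\bigr|\leq \frac{|MI_z|\,\omega(|MI_z|)}{\eps^2},
\end{equation*}
which, for $r$ close enough to $1$, is at most $|MI_z|/2$. The complement $S_z\subset MI_z$ then has measure at least $|MI_z|/2$ and satisfies $|v(x) - v_{MI_z}|\leq \eps$ pointwise, so that combining with $|\tilde v(z) - v_{MI_z}|\leq \eps$ from the first part we obtain $|\tilde v(z) - v(x)|\leq 2\eps$ on $S_z$ (relabel $\eps$). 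Properness of $\tilde v$ is then immediate: on $S_z$ we have $|v(x)| = 1$, so $\bigl||\tilde v(z)| - 1\bigr|\leq 2\eps$ whenever $|z|\geq r$; equivalently $|\tilde v(z)|\to 1$ as $|z|\to 1$, so preimages of compacts of $\mathbb{D}$ under $\tilde v$ remain in a compact subset of $\mathbb{D}$.

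The main technical point is the $H^{1/2}$ Poincar\'e-type inequality on small arcs together with the decay of its right-hand side as $|I|\to 0$; the rest is Poisson-kernel bookkeeping already encoded in the preceding Poisson estimate.
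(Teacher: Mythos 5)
Your proof is correct and follows the same overall structure as the paper's (Poisson decomposition into a near field on $MI_z$ and a far field on its complement, with the far field handled identically via \eqref{eqM}), but it substitutes different tools for the two BMO-type ingredients. Where the paper invokes the embedding $H^{1/2}\subset\mathrm{VMO}$ and the Brezis--Nirenberg result that $\frac{1}{|MI_z|}\int_{MI_z}|v-v_{MI_z}|\,d\theta\to 0$ uniformly, you derive an $L^2$ oscillation bound directly from the Gagliardo seminorm via Jensen and the chord--arc estimate $|e^{i\alpha}-e^{i\beta}|\le|\alpha-\beta|\le|I|$; and where the paper invokes the John--Nirenberg exponential inequality to produce the good set $S_z$, you use Chebyshev on that same $L^2$ oscillation. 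This is more elementary and keeps the proof self-contained, at the cost of being specific to the $L^2$/$H^{1/2}$ setting, whereas the paper's route shows the conclusion only uses $v\in\mathrm{VMO}$. One point you gloss over: your $\omega(|I|)$ is written as if it depends only on the arc length, but $\int_I\int_I|v(\alpha)-v(\beta)|^2/|e^{i\alpha}-e^{i\beta}|^2\,d\alpha\,d\beta$ depends on the position of $I$ as well; the correct (and easy) fix is to dominate it uniformly in position by the integral over the diagonal strip $\{(\alpha,\beta):|\alpha-\beta|\le|I|\}$, which is a genuine modulus of continuity by dominated convergence, and this is what your argument implicitly needs for the $\sup_{|z|\ge r}$ in the statement.
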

The result and proof are sketch at beginning of section 8 of \cite{Bishop2}, we give details for the sake of completeness.
\begin{proof}
Let us remind that, $H^{1/2} \subset \mathrm{VMO} \subset \mathrm{BMO}$, see \S 1.2 \cite{BN}\footnote{In this reference it is proved on $\R$ but it is also true on $\mathbb{S}^1$ by conformal invariance of all those spaces, see corollary VI.1.3 \cite{Gar}}.
So we have, by Lemma 3 of \cite{BN}, that, for every $M>0$,
\begin{equation}
\label{lem3}
\lim_{\vert z\vert \rightarrow 1} \frac{1}{\vert M I_z \vert }\int_{MI_z} \vert v -v_{MI_z}\vert \, d\theta =0 \text{ uniformly in }z.
\end{equation}
Moreover, $\tilde{v}$ is given by the Poisson formula, 
$$\tilde{v}(z)=   \int_0^{2\pi} P_z(\theta) v(e^{i\theta})\, d\theta,$$
where $P_z(\theta)= \frac{1}{2\pi} \frac{1-\vert z\vert^2}{\vert z-e^{i\theta}\vert^2}$. Hence, let $M>0$, we have, using the fact that $\int_{\mathbb{S}^1} P_z \, d\theta =1$ and $P_z \geq 0$, that

$$
\vert \tilde{v}(z)- v_{MI_z}\vert \leq \int_{MI_z}  P_z \vert v- v_{MI_z}\vert \, d\theta  + \int_{(MI_z)^c}  P_z \vert v- v_{MI_z}\vert \, d\theta 
$$

Let $\epsilon>0$, thanks to \eqref{eqM} and the fact that $\vert v\vert =1$ and $\vert v_{MI_z}\vert \leq 1$, we have, for $M$ large enough, that

$$\lim_{\vert z\vert \rightarrow 1} \int_{(I_{MI_z})^c} P_z \vert v-v_{MI_z}\vert \,d \theta \leq \frac{\eps}{4}$$

Once $M$ is fixed, since $v$ is in VMO, we have, thanks to \eqref{lem3}), that 
$$\int_{I_{MI_z}} P_z \vert v-v_{MI_z}\vert \,d \theta \leq \frac{2M}{\pi \vert M I_z\vert } \int_{I_{MI_z}}  \vert v-v_{MI_z}\vert \,d \theta \leq \frac{\epsilon}{4}, 
$$
which proves the first part of the lemma. 

Moreover, $v$ satisfies the John-Nirenberg theorem, see (3') of \cite{JN}, there exist $\beta, C>0$ independent of $v$, such for any arc $I\subset \mathbb{S}^1$ we have
$$\frac{1}{\vert I \vert} \int_I e^{\frac{\beta \vert v-v_I \vert }{\kappa}} \, d\theta \leq C ,$$

where 
$$\kappa = \sup_{I'\subset I} \frac{1}{\vert I\vert } \int_I \vert v-v_I\vert \, d\theta ,$$
In particular, 
$$\vert \{ x \in MI_z \, \vert \, \vert v(x)-v_{MI_z}\vert \geq \frac{\epsilon}{2} \}\vert \leq C \vert MI_z\vert  e^{-\frac{\beta \epsilon}{2\kappa}}.$$
Hence by \eqref{lem3}, for $1-\vert z\vert $ small enought, there exists $x \in M_Iz$ such that 

\begin{equation}
\label{If}
\vert g_{M I_{z}} - g(x) \vert \leq \frac{\delta}{2}
\end{equation}

which achieves the proof of the lemma.
\end{proof}

\section{Note on the extended definition of $H^{\frac{1}{2}}\left(\Gamma\right)$} \label{appendixexplainH12}

In this section, we comment and justify the general definition of $H^{\frac{1}{2}}\left(\Gamma, \mathbb{S}^1 \right)$ (see definition \ref{H12Q}). 

First, let $\Omega\subset \R^2$ be a bounded open set and $f:\D \rightarrow \Omega$ be a uniformization. Then for any $u : \Omega \to \R^n$, we have
\begin{equation} \label{propeqofh10spaces} u\circ f \in H^1_0\left( \mathbb{D},\R^n \right) \Leftrightarrow u \in H^1_0\left( \Omega, \R^n \right) \hskip.1cm. \end{equation}
Notice that it is equivalent to $H^1_0\left(\mathbb{D},\R^n \right) = H^1_0\left(\mathbb{D}, \left\vert f' \right\vert^2 ,\R^n \right)$ by pullback of functions with $f$. In fact \eqref{propeqofh10spaces} is just an application of the conformal invariance of the Dirichlet energy and the classical Poincar\'e inequality on $\mathbb{D}$ and on $\Omega$:
$$ \int_{\mathbb{D}} \left\vert u\circ f \right\vert^2  \leq C_{\mathbb{D}} \int_{\mathbb{D}} \left\vert \nabla \left(u\circ f\right) \right\vert ^2 = C_{\mathbb{D}}  \int_{\Omega} \left\vert \nabla u \right\vert ^2 \leq C_{\mathbb{D}} \left\| u \right\|_{H^1\left( \Omega \right)}  $$
$$ \int_{\Omega} \left\vert u \right\vert^2  \leq C_{\Omega} \int_{\Omega} \left\vert \nabla u \right\vert ^2 = C_{\Omega} \int_{\mathbb{D}} \left\vert \nabla \left(u\circ f\right) \right\vert ^2  \leq C_{\Omega} \left\| u \circ f \right\|_{H^1\left( \mathbb{D} \right)} $$
for any $u\in \mathcal{C}^{\infty}_c(\Omega)$. Notice that the Poincar\'e inequality in $\Omega$ can also be seen as the classical Hardy inequality since $\sup_{z\in \mathbb{D}} \left\vert f'(z) \right\vert^2 \left(1-\left\vert z \right\vert\right)^2 < +\infty$ by Koebe's theorem.

Now, we would ideally like to have $ H^1\left(\mathbb{D},\left\vert f'(z) \right\vert^2 \right) = H^1\left(\mathbb{D} \right)$ too in order to give a sense to definition \ref{H12Q}. Indeed, we would like the function $g : \Gamma \to \mathbb{R}^n$ to be some trace of a $H^1\left( \Omega \right)$ function $u$. 
Pulling back to $\mathbb{D}$ by $f$, this means that $g \circ f : \Gamma \to \mathbb{R}^n$ is some trace of the function $u\circ f \in H^1\left(\mathbb{D},\left\vert f'(z) \right\vert^2 \right)$. However, $g \circ f : \Gamma \to \mathbb{R}^n$ is well-defined as a $H^{\frac{1}{2}}\left(\mathbb{S}^1\right)$ trace only if it is a trace of a $H^1\left(\mathbb{D}\right)$ function.

Then, we assume that $g \circ f$ admits some extension in $ H^1\left(\mathbb{D},\left\vert f'(z) \right\vert^2 \right) \cap H^1\left(\mathbb{D} \right)$ since those sets are not a priori equal. We then a priori ask more restrictions for being in $H^\frac{1}{2}(\Gamma, \R^n)$ than in definition \ref{H12Q}:
 
\begin{defi}[$H^\frac{1}{2}(\Gamma, \R^n)$ maps]
 Let $n\geq 1$, $\Gamma$ be a quasicircle enclosing a domain $\Omega$, and $f:\D \rightarrow \Omega$ a uniformization. We say that $g :\Gamma \rightarrow \R^n$ is in $ H^\frac{1}{2}(\Gamma,\R^n)$ if $g\circ f \in H^\frac{1}{2}(\mathbb{S}^1,\R^n)$ and if $g\circ f$ admits an extension $v \in H^1(\mathbb{D},\R^n)$ such that 
\begin{equation}
\label{DH12}
\int_{\mathbb{D}} \left\vert v \right\vert^2(z) \left\vert f' \right\vert^2(z) dz < +\infty
\end{equation} 
that is to say $v\circ f^{-1} \in H^1\left( \Omega,\mathbb{S}^1 \right)$.
\end{defi}

Once we know that such an extension $v$ exists, then by \eqref{propeqofh10spaces}, the set of $H^1\left(\mathbb{D}\right)$ extensions of $g\circ f$: $\{ v \} + H^1_0\left( \mathbb{D} \right)$ is equal to the pullback of the natural set of $H^1\left(\Omega \right)$ extensions of $g$: $\{ v\circ f^{-1} \} +  H^1_0\left( \Omega \right)$. This makes the definition consistent. In particular, the harmonic extension $v$ of $g\circ f$ satisfies $v\circ f^{-1} \in H^1\left(\Omega \right)$.

In the special case where $g$ is bounded, then \eqref{DH12} is automatically satisfied since $ H^1\cap L^{\infty}\left(\mathbb{D},\left\vert f'(z) \right\vert^2 \right) = H^1\cap L^{\infty}\left(\mathbb{D} \right)$ and the harmonic extension $v$ of $g\circ f$ is also bounded. 
This shows that definition \ref{H12Q} makes sense. 

\section{Wente type estimate}
We gives a proof of a Wente type lemma, which is inspired of Theorem A.4 of \cite{LL} (see also Theorem 1.100 \cite{Sem} and proof of Theorem 1.2 \cite{LP}).

\begin{lem}
\label{LLP}
There exists $C>0$ such that if $f\in L^1(\D)$ such that 

$$\sup_{\D}\vert f(x) (1-\vert x \vert)^2 \vert < +\infty,$$ then, there exists $\phi \in W_0^{1,2}(\D) \cap L^\infty(\D)$ such that 

$$\Delta \phi= f,$$
and
$$ \Vert \phi \Vert_{\infty} + \Vert \nabla \phi \Vert_2 \leq C (\Vert f\Vert_1 + \Vert f(1-\vert x\vert)^2 \Vert_\infty) .$$  
\end{lem}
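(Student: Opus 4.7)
The approach is to represent the candidate solution via the Dirichlet Green's function of the unit disk,
\[
\phi(x) = -\int_{\D} G(x,y) f(y)\,dy, \qquad G(x,y) = \frac{1}{4\pi}\log\left(1 + \frac{(1-|x|^2)(1-|y|^2)}{|x-y|^2}\right),
\]
and to exploit its explicit form. Writing $d(x) := 1-|x|$ and $M := \|d^2 f\|_{\infty}$, the plan is to first derive $\|\phi\|_{\infty} \leq C(\|f\|_1 + M)$ and then recover $\|\nabla \phi\|_2$ by integration by parts, which is a one-line consequence: $\int_{\D} |\nabla \phi|^2 = -\int_{\D} \phi f \leq \|\phi\|_{\infty}\|f\|_1$. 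A brief approximation of $f$ by smooth compactly supported $f_n$ with $\|f_n\|_1$ and $\|d^2 f_n\|_{\infty}$ uniformly bounded is needed to justify the integration by parts for general $f$ and to place $\phi$ in $W^{1,2}_0(\D)$ via weak compactness.

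For the pointwise bound on $\phi$, fix $x \in \D$ and split $\D = \Omega_1 \sqcup \Omega_2$ with $\Omega_1 = \{y : |x-y|^2 \geq d(x)d(y)\}$ and $\Omega_2$ its complement. On $\Omega_1$ the argument of the logarithm defining $G$ is bounded by $5$, so $G(x,y) \leq C$ and the corresponding contribution is at most $C\|f\|_1$. On $\Omega_2$, combining $|x-y|^2 < d(x)d(y)$ with the $1$-Lipschitz property $|d(x)-d(y)| \leq |x-y|$ gives, via a quadratic inequality in $t = \sqrt{d(y)/d(x)}$, the two-sided comparability $d(y) \sim d(x)$; in particular $|x-y| \leq C d(x)$ and $|f(y)| \leq M/d(y)^2 \leq C M / d(x)^2$. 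After rescaling $u = (y-x)/d(x)$,
\[
\int_{\Omega_2} G(x,y)\,dy \leq d(x)^2 \int_{|u| \leq C} \frac{1}{4\pi}\log\left(1 + \frac{C'}{|u|^2}\right) du \leq C'' d(x)^2,
\]
since the logarithmic singularity at $u=0$ is integrable in dimension two. Multiplying by the pointwise bound on $f$ yields $\int_{\Omega_2} G|f| \leq C''' M$, and summing the two contributions gives $|\phi(x)| \leq C(\|f\|_1 + M)$.

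The main obstacle is the estimate on $\Omega_2$: a naive bound $G(x,y) \lesssim |\log|x-y||$ ignores the fact that $G$ vanishes on $\partial \D$ and would produce a factor $|\log d(x)|$ that blows up as $x$ approaches the boundary, destroying the $L^\infty$ bound. The delicate point is that the correct splitting is not $|x-y|$ versus $d(x)$ but the symmetric comparison $|x-y|^2$ versus $d(x)d(y)$: on $\Omega_1$ this forces enough vanishing of $G$ at the boundary to absorb the non-integrability of $1/|x-y|^2$ at large scales, while on $\Omega_2$ the comparability $d(x) \sim d(y)$ is automatically supplied by the splitting itself, and the logarithmic singularity is then confined to a region of area $\sim d(x)^2$ on which the weighted pointwise control of $f$ exactly compensates.
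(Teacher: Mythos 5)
Your proof is correct, and it reaches the same milestone as the paper's — a pointwise bound on the Green potential, from which the $L^2$ gradient bound follows by integration by parts — but the mechanism you use is genuinely different and, for this specific domain, more self-contained. The paper follows Theorem A.4 of the reference \cite{LL}: it writes $G(x,y) = l_x(y) + (\text{bounded})$ with $l_x(y) = -\sum_{j\geq 1}\Theta\bigl(2^j(x-y)/(1-|x|)\bigr)$ a sum of dyadic bump functions, and then sums a geometric series of ball integrals weighted by $\Vert f\,d^2\Vert_\infty$. The nontrivial input there is the boundedness of $G - l_x$, which is borrowed from \cite{LL} and is designed to work in the absence of a closed-form Green's function. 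You instead exploit the explicit disk formula
$$G(x,y)=\frac{1}{4\pi}\log\left(1+\frac{(1-|x|^2)(1-|y|^2)}{|x-y|^2}\right)$$
and replace the countable dyadic decomposition with a single two-zone split $|x-y|^2 \gtrless d(x)d(y)$. On the far zone the log argument is bounded by an absolute constant, so $G\lesssim 1$ and the $L^1$ norm of $f$ controls the contribution; on the near zone the split forces $d(y)\sim d(x)$, which turns the weighted $L^\infty$ bound on $f$ into a constant-times-$d(x)^{-2}$ bound exactly matched by the $d(x)^2$ produced when you rescale the logarithmic singularity. Your observation that the natural scale is the symmetric quantity $d(x)d(y)$ rather than $d(x)$ alone is the right one and is precisely what the paper's dyadic windows encode implicitly. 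One small remark on the last step: from $\|\nabla\phi\|_2^2 \leq \|\phi\|_\infty\|f\|_1$ you should spell out that $\|\nabla\phi\|_2 \leq \tfrac{1}{2}(\|\phi\|_\infty+\|f\|_1)$ to get a bound that is linear, not square-root-homogeneous, in the data; that is what the stated inequality requires, and it holds, but it's worth making explicit. The approximation argument to place $\phi$ in $W_0^{1,2}$ is routine as you say, though you should take care that the truncate-and-mollify procedure is done against the boundary (so that $d(x)\approx d(y)$ on supports) to preserve $\|d^2 f_n\|_\infty$ up to a fixed factor.
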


\begin{proof}

It is classical that, by integration by part, the $L^\infty$-estimate implies the $L^2$-estimate. Hence we set
$$\phi(x)=\int_\D G(x,y) f(y)\, dy,$$
where $G$ is the Green function of the Laplacian. Then, we remark that the singularity of the Green function can be decomposed as a sum of bump functions with dyadic support, see Theorem A.4 of \cite{LL}: there exits $C>0$ such that 
$$-C \leq G(x,y)-l_x(y) \leq C,$$
with 
$$ l_x(y)=-\sum_{j=1}^\infty \Theta \left(\frac{2^j(x-y)}{1-\vert x\vert}\right),$$
where $\Theta\in C^\infty_c(\R, [0,1])$ is a  bump function, such as $\mathrm{supp}(\Theta)\subset [-\frac{1}{8},\frac{1}{8}]$ and $\Theta \equiv 1$ on $[-\frac{1}{16},\frac{1}{16}]$. Then
$$\vert \phi(x) \vert \leq C \Vert f\Vert_1 + \sum_{j=1}^\infty \int_\D \Theta \left(\frac{2^j(x-y)}{1-\vert x\vert}\right)f(y) \, dy .$$
Then we estimate each integral of the right hand 
\begin{equation}
\begin{split}
    \int_\D \Theta \left(\frac{2^j(x-y)}{1-\vert x\vert}\right) f(y) \, dy & \leq  \int_{B(x, 2^{-j-3}(1-\vert x\vert))}  f(y) \, dy\\
    & \leq    \Vert f(1-\vert x\vert)^2 \Vert_\infty  \int_{B(x, 2^{-j-3}(1-\vert x\vert))}  \frac{1}{(1-\vert y\vert)^2} \, dy \\
    & \leq \frac{\pi}{(2^{j+3}-1)^2}  \Vert f(1-\vert x\vert)^2 \Vert_\infty
\end{split}
\end{equation}
Finally we just have to sum this inequality to get the result.
\end{proof}

\bibliographystyle{plain}
\bibliography{biblio}

\end{document}